\def\sfl{static feedback linearizable}
\def\dfl{dynamic feedback linearizable}
\def\stf{static feedback}
\def\df{dynamic feedback}
\theoremstyle{plain}
\newtheorem{thm}{Theorem}
\newtheorem{prop}{Proposition}
\newtheorem{lem}{Lemma}
\newtheorem{cor}{Corollary}
\newtheorem*{proc contact}{Procedure Contact}
\theoremstyle{definition}
\newtheorem{exmp}{Example}[section]
\newtheorem{defn}{Definition}[section]
\newtheorem{rem}{Remark}[section]
\def\mb #1{\mathbf{#1}}
\def\wh{\widehat}
\def\bsy{\boldsymbol}
\def\wt{\widetilde}
\def\ch #1{\mathrm{Char}\,#1}
\def\w{\wedge}
\def\sfl{static feedback linearizable}
\def\dfl{dynamic feedback linearizable}
\def\stf{static feedback}
\def\df{dynamic feedback}
\def\wh{\widehat}
\def\bsy{\boldsymbol}
\def\wt{\widetilde}
\def\ch #1{\mathrm{Char}\,#1}
\def\w{\wedge}
\def\mrm #1{\mathrm{#1}}
\def\CV{{\mathcal{V}}}
\def\CW{{\mathcal{W}}}
\def\CB{{\mathcal{B}}}
\def\CP{{\mathcal{P}}}
\def\CK{{\mathcal{K}}}
\def\CL{{\mathcal{L}}}
\def\CO{{\mathcal{O}}}
\def\CS{{\mathcal{S}}}
\def\wt{\widetilde}
\def\b{\beta}
\def\D{\Delta}
\def\e{\varepsilon}
\def\G{\Gamma}
\def\l{\lambda}
\def\L{\Lambda}
\def\k{\kappa}
\def\f{\phi}
\def\vf{\varphi}
\def\o{\omega}
\def\r{\rho}
\def\s{\sigma}
\def\Sg{\Sigma}
\def\t{\tau}
\def\th{\theta}
\def\B#1{\mathbb{#1}}
\def\P #1{\partial_{#1}}
\def\B#1{\mathbb{#1}}
\def\P #1{\partial_{#1}}
\newcommand{\ann}{\mathrm{ann}\,}
\newcommand{\mcal}[1]{\mathcal{#1}}
\newcommand{\der}[2]{\mathcal{#1}^{(#2)}}
\newcommand{\Chare}[1]{\mathrm{Char}\,{\mathcal{#1}}}
\newcommand{\Char}[2]{\mathrm{Char}\,{\mathcal{#1}}^{(#2)}}
\newcommand{\inCharOne}[1]{\mathrm{Char}\,{\mathcal{#1}}^{(1)}_{0}}
\newcommand{\CommentBlock}[1]{\ignorespaces}
\def\mb #1{\mathbf{#1}}
\def\wh{\widehat}
\def\bsy{\boldsymbol}
\def\wt{\widetilde}
\def\ch #1{\mathrm{Char}\,#1}
\def\w{\wedge}
\def\sfl{static feedback linearizable}
\def\dfl{dynamic feedback linearizable}
\def\stf{static feedback}
\def\df{dynamic feedback}
\def\rk #1{\mathrm{rank} #1}
\def\CV{{\mathcal{V}}}
\def\CB{{\mathcal{B}}}
\def\CP{{\mathcal{P}}}
\def\CK{{\mathcal{K}}}
\def\CL{{\mathcal{L}}}
\def\CO{{\mathcal{O}}}
\def\CS{{\mathcal{S}}}
\def\mcal #1{\mathcal{#1}}
\def\mrm #1{\mathrm{#1}}
\def\wt{\widetilde}
\def\b{\beta}
\def\D{\Delta}
\def\e{\varepsilon}
\def\G{\Gamma}
\def\l{\lambda}
\def\L{\Lambda}
\def\k{\kappa}
\def\f{\phi}
\def\vf{\varphi}
\def\o{\omega}
\def\r{\rho}
\def\s{\sigma}
\def\Sg{\Sigma}
\def\t{\tau}
\def\th{\theta}
\def\B#1{\mathbb{#1}}
\def\P #1{\partial_{#1}}
 	\definecolor{apricot}{rgb}{0.98, 0.81, 0.69}
	\definecolor{aqua}{rgb}{0.0, 1.0, 1.0}
	\definecolor{aquamarine}{rgb}{0.5, 1.0, 0.83}	
	\definecolor{blond}{rgb}{0.98, 0.94, 0.75}	
	\definecolor{blizzardblue}{rgb}{0.67, 0.9, 0.93}	
	\definecolor{cambridgeblue}{rgb}{0.64, 0.76, 0.68}	
	\definecolor{carolinablue}{rgb}{0.6, 0.73, 0.89}	
	\definecolor{champagne}{rgb}{0.97, 0.91, 0.81}	
	\definecolor{cream}{rgb}{1.0, 0.99, 0.82}	
	\definecolor{cornsilk}{rgb}{1.0, 0.97, 0.86}
	\definecolor{deepchampagne}{rgb}{0.98, 0.84, 0.65}
	\definecolor{britishracinggreen}{rgb}{0.0, 0.26, 0.15}
	\definecolor{arsenic}{rgb}{0.23, 0.27, 0.29}
	\definecolor{darksienna}{rgb}{0.24, 0.08, 0.08}
	\definecolor{black}{rgb}{0.00, 0.00, 0.00}
	\definecolor{lavenderpink}{rgb}{0.98, 0.68, 0.82}
	\definecolor{maroon(html/css)}{rgb}{0.5, 0.0, 0.0}
	\definecolor{onyx}{rgb}{0.06, 0.06, 0.06}
	\definecolor{paleblue}{rgb}{0.69, 0.93, 0.93}
	\definecolor{pearl}{rgb}{0.94, 0.92, 0.84}
	\definecolor{paleblue}{rgb}{0.69, 0.93, 0.93}
	\definecolor{smokyblack}{rgb}{0.06, 0.05, 0.03}
	\definecolor{springgreen}{rgb}{0.0, 1.0, 0.5}
	\definecolor{tan}{rgb}{0.82, 0.71, 0.55}
	\definecolor{sunset}{rgb}{0.98, 0.84, 0.65}
	\definecolor{timberwolf}{rgb}{0.86, 0.84, 0.82}
	\definecolor{vanilla}{rgb}{0.95, 0.9, 0.67}
	\definecolor{whitesmoke}{rgb}{0.96, 0.96, 0.96}
	\definecolor{prussianblue}{rgb}{0.0, 0.19, 0.33}
	\definecolor{peach}{rgb}{1.0, 0.9, 0.71}
	\definecolor{white}{rgb}{1.0,1.0,1.0}
	\definecolor{peterGold}{rgb}{1.0,0.95,0.78}
	\definecolor{peterBlue}{rgb}{0.89,1.0,1.0}
	\definecolor{peterRose}{rgb}{1.0,0.94,0.84}
	\definecolor{peterBuff}{rgb}{1.0,1.0,0.84}
\begin{document}

\title{Quotients of Invariant Control Systems}


\author{Taylor J. Klotz \and Peter J. Vassiliou}


%

\maketitle


\begin{abstract}
In previous work it was shown that if a control system $\CV\subset TM$ on manifold $M$ has a control symmetry group $G$ then it very often has group quotients (or symmetry reductions) $\CV/G$ which are \sfl. This, in turn, can be applied to systematically construct dynamic feedback linearizations of $\CV$ \cite{CKV24}; or to construct partial feedback linearizations \cite{DTVa}, when no dynamic feedback linearization exists. Because of these and related applications, this paper makes a detailed study of  symmetry reduction for control systems. 
We show  that a key property involved in the symmetry reduction of control systems is that of transversality of Lie group actions. Generalizing this notion, we provide an analysis of how the geometry of an invariant distribution, and particularly a control system, is altered as a consequence of symmetry reduction. This provides important information toward understanding the unexpectedly frequent occurrence of \sfl\ quotients.  Specifically, we detail how the integrability properties and ranks of various canonical sub-bundles of the quotient object differ from those of the given distribution. As a consequence we are able to classify the static feedback linearizable quotients of $G$-invariant control systems $\CV$ based upon the geometric properties of $\CV$ and the action $G$. Additionally,  we prove that \stf\ linearizability is preserved by symmetry reduction and the well-known Sluis-Gardner-Shadwick (S-G-S) test for the \stf\ linearization of control systems is extended to  orbital feedback linearization.   A generalized S-G-S test for the {\it a priori} \stf\ and orbital feedback linearizability of $\CV/G$ is also given, based on the Lie algebra of $G$. Finally, we apply all our results to the well-known PVTOL control system. The existence and non-existence of its \sfl\ quotients by 1- and 2 - dimensional Lie groups are classified and it is furthermore shown that the PVTOL can be viewed as an invariant control system on a certain 9-dimensional Lie group.  

\vskip 10 pt 
\noindent {\it Key words}: {Lie symmetry reduction, contact geometry, static feedback linearizable quotients. }\newline
\noindent{\it AMS subject classification}: { 53A55, 58A17, 58A30, 93C10}
\end{abstract}

\section{Introduction}

\subsection{Motivation}

Control systems on Lie groups and more generally control systems with symmetry, form an important and much studied class with many applications. Since these systems generally do not admit static feedback linearization, it is of great interest to study their {\it dynamic} feedback linearization. While the \df\ linearization of control systems has an extensive literature, there appear to be very few papers specifically devoted to the dynamic feedback linearization of control systems with symmetry (see subsection \ref{Relation} for some recent advances).

In this direction, \cite{CKV24} established a sufficient condition in order that a control system with symmetry be \dfl. This sufficient condition is constructive in that, if satisfied, then a systematic procedure enables the construction of a dynamic linearization which makes explicit use of the given symmetry group, $G$. If the control system is represented as a Pfaffian system $\bsy{\o}\subset T^*M$ or equivalently a vector field distribution $\CV\subset TM$ on a manifold $M$, then the procedure in \cite{CKV24} relies upon the existence of a \sfl\ {\it quotient control system} $\bsy{\o}/G$ or equivalently $\CV/G$.  Therefore this paper focuses on the geometric characterization of \sfl\ Lie group quotients $\CV/G$.

An intriguing but currently not well understood phenomenon is that while \sfl\ control systems are rare, \sfl\ {\it quotients} of nonlinear control systems appear to be quite common. Even control systems that are not \dfl\ may yet have \sfl\ quotient control systems (see 5.1 example 1 in \cite{DTVa})! Additionally, if a nonlinear control system $\CV$ is $G$-invariant, we have observed
that $\CV/H$ is very often \sfl\ as $H$ ranges over the Lie subgroups of $G$. In general Lie subgroups $H$ of different dimensions give rise to distinct Brunovsk\'y normal forms for $\CV/H$.

Because of its significance to the problem of \df\ linearization \cite{CKV24}, as well as to partial feedback linearization  \cite{DTVa}, for control systems with symmetry this paper makes a detailed study of symmetry reduction for invariant control systems. In particular, we determine the precise mechanism by which the geometric properties of the quotient control system $\CV/G$ are derived from those of the given invariant control system $\CV$ and the symmetry group $G$ of $\CV$. Importantly, we seek to explain the frequent occurrence of \sfl\ quotients, mentioned above. 

A special case will be of particular interest, namely, the case in which the given control system $\CV$ is itself equivalent to a Brunovsk\'y normal form via some local diffeomorphism of the ambient manifold, including the case when $\CV$ is \sfl. Apart from reasons of completeness, this is because the quotients of \sfl\ control systems turn out to have an important bearing on the general problem of  \df\ linearizability of control systems with symmetry. We shall report on this aspect in a separate work.

\subsection{Outline of Paper}
We remark that the theory of Goursat bundles  \cite{Vassiliou2006a,Vassiliou2006b} implies that any statement about static feedback linearization in this work may be upgraded to orbital feedback linearization, which will be addressrd in forthcoming work. Moreover, many of the results in this paper are of independent interest to the theory of distributions and Pfaffian systems in general.

\begin{itemize}
\item Section \ref{GeomOfControlSysSection} describes the main tools and terminology that are used throughout the paper and may be considered a summary of some of the results of \cite{Vassiliou2006a,Vassiliou2006b}.
\item In Section \ref{controlSymmetriesSect} we give a summary of key results of \cite{DTVa} which defines two key concepts of this work: control admissible symmetries and relative Goursat bundles. The action of such a symmetry (pseudo) group lead to control systems on the associated quotient space so that the number of controls and the time variable are preserved. 
\item Section \ref{WeberAndBryantSect} establishes novel results about Bryant sub-bundles and their role in the geometry of control systems. Moreover, it ties together the notions of Bryant sub-bundles, Weber structures, and Engel rank, some of which have appeared in previous literature. 
\item Section \ref{transversalitySection}, containing one of the main results of the paper, analyses in detail how the geometry of a $G$-invariant  distribution $\CV$ (or Pfaffian system $\bsy{\o}$) differs from that of its quotients $\CV/G$ or $\bsy{\o}/G$, respectively. 
\item In Section \ref{generalLinearizableQuotients} we apply the results of Section \ref{transversalitySection} to arrive at a characterization of when an invariant control system has feedback linearizable quotient control systems. 
\item Section \ref{ReductionOfGoursatBundles} studies symmetry reductions of feedback linearizable control systems. Results from previous sections are used to recall a geometric characterization of \sfl\ control systems. This is applied to show that any quotient of a feeback linearizable control system is also feedback linearizable.
\item In section \ref{GSStestSection}, a simple, rapid test is given for the \stf\ linearization of any control system or any quotient control system 
by generalizing the well-known Sluis-Gardner-Shadwick (S-G-S) test for \stf\ linearization. We also give a general test for the orbital feedback linearization of any control system.
\item Lastly, in Section \ref{PVTOLsection} we apply some of the theory to study the structure of the well-known PVTOL control system. We demonstrate how the results of this paper can be used to predict  the existence and non-existence of some \sfl\ quotients depending only on the dimension and transversality of its symmetry group actions. Finally, we show that the PVTOL system defines an invariant distribution on a 9-dimensional Lie group and we identify the associated Lie algebra explicitly. 
\end{itemize}
The {\tt\bf Maple} package {\tt\bf DifferentialGeometry} \cite{AndersonTorre}, is used for all the computations in this paper.

\subsection{Relation to Other Work}\label{Relation}
Geometric control theory is a vast field of interdisciplinary research. The relationships between nonlinear control theory, differential geometry, and symmetry, in particular the use of Lie brackets to determine feedback invariants, was established and matured during the period of the 1950s through the 1980s, see \cite{BrockettReview} for a historical overview of this time. While a number of researchers have contributed to the current status of static feedback linearizability and feedback invariants over these four decades, of key importance are the results of Roger Brockett \cite{BrockettLin}, and independently of Krener \cite{krener} and of Jacubczyk and Respondek \cite{JakubczykLin}. The aforementioned works essentially state that if a certain collection of distributions -- generated via Lie brackets of vector fields defining a control system -- are all integrable and satisfy simple rank conditions at the origin, then the control system is static feedback linearizable locally about an open set around the origin. These conditions were generalized and expressed in the language of differential forms and Cartan-style differential geometry in \cite{BrunovskySymmetry,GSFeedback,GS92}, and there is also the so-called ``blended algorithm" which uses a combination of differential forms and distribution language \cite{Blended}. 

In this work, we use the theory of Goursat bundles, introduced by the second author in \cite{Vassiliou2006a} and \cite{Vassiliou2006b}. A key reason for this is that it helps to put much of this previous work into a broader geometric context. Importantly, it is not confined to static feedback equivalences but solves the recognition and construction problems for Brunovsky normal forms up to arbitrary local diffeomorphisms; at the same time, it is easy to specialize to \stf\ equivalence as the need arises. This has a direct impact on the problem of orbital feedback linearization, which will be made explicit in future work. Moreover, the integrable bundles that appear in previous  linearization algorithms are effectively constructed from coordinate independent sub-bundles, and hence the theory applies to any smooth vector field distribution or Pfaffian system on a smooth manifold.  

We mention as well the related work of transverse feedback linearization \cite{TVFL1} where one seeks feedback linearizability of the dynamics transverse to a given embedded submanifold that is {\it controlled-invariant} i.e. an invariant submanifold of the ODE system that arises from the control system when the controls are a pure state feedback. In the given controlled-invariant submanifold is part of a family of invariant submanifolds of a control admissible symmetry group action then, in principle, the transverse outputs should match the fundamental functions of the relative Goursat bundles in \cite{DTVa}. Whether invariant submanifolds of control admissible symmetry group actions are controlled-invariant submanifolds is not clear to the authors at the time of writing. 

We mention also recent work concerning the use of symmetry in constructing flat outputs for mechanical control systems. See \cite{KWflatsym}. It would be a worthwhile effort to put these results into the broader geometric picture presented in \cite{CKV24}. 

\section{Geometry of Control Systems} \label{GeomOfControlSysSection}

In this section we review a geometric formulation of control systems expressed in terms of differential geometry on (finite) smooth manifolds. The exposition emphasizes those aspects relevant to the applications that follow. More details can be found in \cite{Vassiliou2006a, Vassiliou2006b, cascade2018, DTVa} and \cite{KlotzThesisShort} (or \cite{KlotzThesisLong} for the full thesis).

\begin{defn}
A {\it control system} is a parametrized family of ordinary differential equations
\begin{equation}\label{controlSystem}
\dot{x}=f(t,x,u),\ \ \ \ \ \ \ \ x\in\B R^n,\ \ \ u\in\B R^m
\end{equation}
in which the vector $x$ is comprised of the {\it state variables} taking values in some open set 
$\mathbf{X}\subseteq \B R^n$ and the vector $u$ is comprised of the {\it inputs} or {\it controls} taking values in some open set $\mathbf{U}\subseteq \B R^m$. Time $t$ takes values in a connected subset of the real line. 

Throughout, we very often invoke the {\it Pfaffian system} representation of a control system as the vanishing
of differential 1-forms 
\begin{equation}\label{controlPfaffian}
\bsy{\o}=\mathrm{span}\Big\{dx^1-f^1(t,x,u)\,dt,\ \ \ dx^2-f^2(t,x,u)\,dt,\ldots,
dx^n-f^n(t,x,u)\,dt\Big\}
\end{equation}
defining a sub-bundle of the cotangent bundle $\bsy{\o}\subset T^*(\B R\times\mathbf{X}\times \mathbf{U})$, and we exploit the geometric properties of $\bsy{\o}$ under local changes of variable. By the same token, we often express our control systems dually as a sub-bundle of the tangent bundle
$\ker\bsy{\o}=\CV\subset T(\B R\times\mathbf{X}\times \mathbf{U})$ 
$$
\CV=\mathrm{span}\big\{\P t+\sum_{i=1}^n f^i(t,x,u)\P {x^i}, \ \ \P {u^1},\ \ \P {u^2},\ \ldots,\ \P {u^m}\Big\},
$$
and frequently switch between the two representations as the need arises. We often refer to 
$\bsy{\o}$ and $\CV$ {\em themselves} as {control systems}.
\end{defn}
\begin{defn}\label{SFT def}
A diffeomorphism $\varphi:M\to N$ of the form 
\[
\varphi:(t,x,u) \mapsto (t,\theta(t,x),\psi(t,x,u)) 
\]
is called a \textit{static feedback transformation} (SFT). Two control systems $(M,\omega)$ and $(N,\eta)$ are called \textit{static feedback equivalent} (SFE) if $\varphi^*\eta=\omega$ for some SFT $\varphi:M\to N$.
\end{defn}
Static feedback linearizable control systems represent a special case of static feedback equivalence, namely, those that are SFE to the Brunovsk\'y normal forms.

Let $M:=\B R\times\mathbf{X}\times\mathbf{U}$. When we wish to draw attention to the state space or control space factors of a manifold $M$ carrying a control system, we write $\mathbf{X}(M)$ or $\mathbf{U}(M)$, respectively.

We now present some definitions and notation from the geometry of distributions that are used in this paper. 
Let us denote by $\CV^{(j)}$ the $j^{\text{th}}$ derived bundle of $\CV:=\CV^{(0)}$,
defined recursively by 
\[\CV^{(j)}=\CV^{(j-1)}+[\CV^{(j-1)},\CV^{(j-1)}],\quad j\geq 1.\]
The sequence
$$
\CV\subset\CV^{(1)}\subset\cdots\subset\CV^{(k)}\subseteq TM
$$
is the {\it\bf derived flag} of $(M, \CV)$ and the integer $k$ is its {\it\bf derived length}. This 
is the smallest integer $k$ such that $\CV^{(k)}=\CV^{(k+1)}$. Throughout we always assume that 
$\CV^{(k)}=TM$.

Denote by $\ch\CV^{(j)}$ the 
{\it\bf Cauchy bundle} of $\CV^{(j)}$, 
$$
\ch\CV^{(j)}=\mathrm{span}\left\{X\in\CV^{(j)}~|~[X, \CV^{(j)}]\subset\CV^{(j)}\right\},\ \ j\geq 0.
$$
We assume for all $j\geq 0$, that $\CV^{(j)}$ and $\ch\CV^{(j)}$ have constant rank and refer to such sub-bundles as {\bf totally regular}. In this totally regular case
$\ch\CV^{(j)}$ can be shown to be integrable for each $j$. Define the {\it\bf intersection bundles}
\begin{equation}\label{intersectionBundleDefn}
\ch\CV^{(i)}_{i-1}:=\CV^{(i-1)}\cap\ch\CV^{(i)},\ \ 1\leq i\leq k-1.
\end{equation}
Unlike the Cauchy bundles, the intersection bundles $\ch\CV^{(i)}_{i-1}$ are not guaranteed to be integrable; however, this will arise as a condition in the definition of Goursat bundle (cf. Definition \ref{Goursat}). 

\begin{defn}\label{decel}
Let $\CV\subset TM$ be a totally regular sub-bundle of derived length $k>1$.
The {\it\bf velocity} of $\CV$ is the ordered list of $k$ integers 
$$
\text{\rm vel}(\CV)=\langle\D_1,\D_2,\ldots,\D_k\rangle,\ \ \text{where}\ \ \D_j=\rk(\CV^{(j)})-\rk(\CV^{(j-1)}),\ 1\leq j\leq k.
$$
The {\it\bf deceleration} of $\CV$ is the ordered list of $k$ integers 
$$
\hskip 20 pt\text{\rm decel}(\CV)=\langle -\D^2_2,-\D^2_3,\ldots,-\D^2_k, \, \D_k\rangle,\ \ \text{where}\ \ \D^2_j=\D_j-\D_{j-1}.
$$
\end{defn}
\begin{defn}
Let 
\begin{equation*}
\begin{aligned}
m_i&=\rk\,\CV^{(i)},\\
\chi^i&=\rk\,\ch\CV^{(i)},\\
\chi^j_{j-1}&=\rk\,\ch\CV^{(j)}_{j-1},\quad 0\leq i\leq k,\quad 1\leq j\leq k-1,
\end{aligned}
\end{equation*}
called the {\it\bf type numbers} of $(M, \CV)$. The list of lists of type numbers
\begin{equation}\label{refinedDerivedTypeDefn}
\begin{aligned}
\mathfrak{d}_r(\CV)=\big[\,\big[m_0,\chi^0\big],\big[m_1,\chi^1_0,\chi^1\big],\big[m_2,\chi^2_1,\chi^2\big],\ldots,\big[m_{k-1},&\chi^{k-1}_{k-2},\chi^{k-1}\big],\cr
&\big[m_k,\chi^k\big]\,\big]
\end{aligned}
\end{equation}
is called the {\it\bf refined derived type} of $(M, \CV)$.
\end{defn}

\subsection{Brunovsk\'y Normal Forms}\label{brun-form-subsect}
In this section we will give an exposition of the partial prolongations of jet spaces and the Brunovsk\'y normal form.
\begin{defn}
Let $J^k(\mathbb{R},\mathbb{R}^m)$ be the standard jet space of order $k$ and let $\bsy{\b}^k_{m}$ be the standard contact system on $J^k(\mathbb{R},\mathbb{R}^m)$ as a Pfaffian system. We will drop the subscript $m$ in the notation $\bsy{\b}^k_m$ and use the shorthand notation $J^k$ when the integer $m$ is known by context. 
\end{defn}

It is proven in \cite{Vassiliou2006a} that the deceleration, 
$\text{decel}(\CV)$ (Definition \ref{decel}), is a diffeomorphism invariant that uniquely identifies the 
Brunovsk\'y normal form of any linearizable control system $\CV$ and we therefore call $\text{decel}(\CV)$ the {\em signature of} $\CV$. The signature of a control system consists of $k$ non-negative integers 
$\text{decel}(\CV)=\langle \r_1, \r_2,\ldots \r_k\rangle$, where $k$ is the derived length of $\CV$. While these are numerical invariants for any control system, if $\CV$ is diffeomorphic to a Brunovsk\'y normal form
then $\r_j$ in $\text{decel}(\CV)$ is the number of sequences of differential forms of order $j$ in the Brunovsk\'y normal form of 
$\mathrm{ann}\CV$. The signature of a control system is widely used in this paper and it is also convenient as a means of classifying Brunovsk\'y normal forms.

\begin{defn}\label{partial prolongation}
A \textit{partial prolongation} of the Pfaffian system 
$(J^1(\mathbb{R},\mathbb{R}^m),\bsy{\b}^{\langle m\rangle})$ 
is a Brunovsk\'y form; i.e., the Pfaffian system associated to the Brunovsk\'y normal form of mixed orders. We use $(J^\kappa(\mathbb{R},\mathbb{R}^m),\bsy{\b}^\kappa)$ to refer to the partial prolongation of {signature} $\kappa=\langle \r_1,\ldots,\r_k\rangle$ where $k$ is the derived length of $\bsy{\b}^\kappa$. 
We also refer to the dual vector distribution $\CB_\k$ as the partial prolongation of $\CB_{\langle m\rangle}$.
\end{defn}
\noindent For example, $\bsy{\b}^{\langle 1, 1\rangle}=\mathrm{span}\{dz-z_1dt, dw-w_1dt, dw_1-w_2dt\}$ 
on $J^{\langle 1, 1\rangle}$ is partial prolongations of 
$\bsy{\b}^{\langle 2\rangle}=\mathrm{span}\{dz-z_1dt, dw-w_1dt\}$, the contact system on $J^1(\B R, \B R^2)$.
Similarly, $\CB_{\langle 1, 1\rangle}=\mathrm{span}\{\P t+z_1\P z+w_1\P w+w_2\P {w_1}, \P {z_1}, \P {w_2}\}$ is a partial prolongation of $\CB_{\langle 2\rangle}$.
\begin{rem}
In the case of a linear control system $\dot{x}=Ax+Bu$, the signature of its distribution representation 
$\CV$ agrees precisely with the
collection of {\it Kronecker indices} of the pair of matrices $(A,B)$. See \cite{CKV24} for more details.
\end{rem}

It is helpful for us to arrange our Brunovsk\'y normal forms according to their signature. In particular, we will think of the new partially prolonged jet space $J^\kappa(\mathbb{R},\mathbb{R}^m)$ as being constructed from jet spaces $J^i(\mathbb{R},\mathbb{R}^{\r_i})$ of fixed order. However, we cannot use a strict product of jet spaces. We must identify the independent variables (i.e. the source) of each jet space together in a product as in, 
\begin{align}
J^\kappa(\mathbb{R},\mathbb{R}^m)&:=\left(\prod_{i\in I}J^i(\mathbb{R},\mathbb{R}^{\rho_i})\right)/\sim,\label{jkappa}\\
\bsy{\b}^\kappa_m&:=\bigoplus_{i\in I}\bsy{\b}^i_{\rho_i},
\end{align}
with $I=\{i\in\B N\,\mid\rho_i\neq 0, 1\leq i\leq k \}$ and each $\bsy{\b}^i_{\r_i}$ is the Brunovk\'y form on jet space $J^i(\mathbb{R},\mathbb{R}^{\r_i})$. The equivalence relation `$\sim$' in (\ref{jkappa}) is defined by
\begin{equation*}
\pi_i\left(J^i(\mathbb{R},\mathbb{R}^{\rho_i})\right)=\pi_j\left(J^j(\mathbb{R},\mathbb{R}^{\rho_j})\right),
\end{equation*}
for all $1\leq i,j\leq k$, where $\pi_i,\pi_j$ are source projection maps (i.e. projection on to the $t$-coordinate on $\mathbb{R}$). 

\begin{exmp}
The Brunovsk\'y normal form of signature $\kappa=\langle1,2,0,0,1\rangle$ on
\begin{equation*}
J^\kappa(\mathbb{R},\mathbb{R}^5)=\left(J^1(\mathbb{R},\mathbb{R})\times J^2(\mathbb{R},\mathbb{R}^2)\times J^5(\mathbb{R},\mathbb{R}))\right)/\sim
\end{equation*}
is generated by the 1-forms
\begin{equation*}
{
\begin{array}{cccc}
\,&\,&\,&\,\\
\,&\,&\,&\theta^4_4=dz^4_4-z^4_5\,dt,\\
\,&\,&\,&\theta^4_3=dz^4_3-z^4_4\,dt,\\
\,&\,&\,&\theta^4_2=dz^4_2-z^4_3\,dt,\\
\,&\theta^2_1=dz^2_1-z^2_2\,dt,&\theta^3_1=dz^3_1-z^3_2\,dt,&\theta^4_1=dz^4_1-z^4_2\,dt,\\
\theta^1_0=dz^1_0-z^1_1\,dt,&\theta^2_0=dz^2_0-z^2_1\,dt,&\theta^3_0=dz^3_0-z^3_1\,dt,&\theta^4_0=dz^4_0-z^4_1\,dt.
\end{array}
}
\end{equation*}
In this example, one can say that $J^\kappa$ has one variable of order 1, two of order 2, zero of orders 3 and 4, and one of order 5. So the signature $\kappa$ represents a list of the local coordinates on $J^\kappa$ categorized by order.
\end{exmp}

The following proposition characterizes the refined derived type \eqref{refinedDerivedTypeDefn} of the 
Brunovsk\'y normal forms. 

\begin{prop}\label{refined derived type numbers}\cite{Vassiliou2006a}
Let $\mcal{B}_\k\subset TJ^\k$ be the distribution that annihilates the 1-forms in a Brunovsk\'y normal form $\bsy{\b}^\kappa$ with signature $\kappa=\langle \rho_1,\ldots,\rho_k\rangle$. Then the entries in the refined derived type 
\begin{equation*}
\mathfrak{d}_r(\mcal{B}_\k)=[[m_0,\chi^0],[m_1,\chi^1_0,\chi^1],\ldots,[m_{k-1},\chi^{k-1}_{k-2},\chi^{k-1}],[m_k,\chi^k]]
\end{equation*}
satisfy the following relations:
\begin{align}
\kappa&=\mathrm{decel}(\mcal{B}_\k),\quad \Delta_i=\sum_{l=i}^{k}\rho_\ell,\\ \nonumber
m_0&=1+m,\quad m_j=m_0+\sum_{l=1}^j\Delta_\ell,\quad1\leq j\leq k,\\ \nonumber
\label{linearTypeConstraints}
\chi^j&=2m_j-m_{j+1}-1,\quad 0\leq j\leq k-1,\\ 
\chi^i_{i-1}&=m_{i-1}-1,\quad 1\leq i\leq k-1, 
\end{align}
where $\Delta_j$ is given in Definition \ref{decel}.
\end{prop}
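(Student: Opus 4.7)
The plan is to compute every entry of $\mathfrak{d}_r(\CB_\k)$ directly from an explicit coordinate dual basis of $\CB_\k$, exploiting the elementary bracket structure of the Brunovsk\'y normal form. Label the chains by pairs $(a,i)$ with $i\in I$ and $1\leq a\leq\r_i$, so that the $(a,i)$-chain carries state variables $z^{a,i}_0,z^{a,i}_1,\ldots,z^{a,i}_i$ with top input $\P{z^{a,i}_i}$. Dually to the $1$-forms generating $\bsy{\b}^\k_m$, the distribution $\CB_\k$ is spanned by the total derivative
\[
D=\P t+\sum_{(a,i)}\sum_{l=0}^{i-1}z^{a,i}_{l+1}\,\P{z^{a,i}_l}
\]
together with the fields $\P{z^{a,i}_i}$, so $m_0=1+\sum_i\r_i=1+m$. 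The only non-vanishing bracket among these generators is $[\P{z^{a,i}_l},D]=\P{z^{a,i}_{l-1}}$ for $l\geq 1$, and a direct induction on $j$ yields
\[
\CB_\k^{(j)}=\mathrm{span}\{D\}\,\oplus\,\mathrm{span}\bigl\{\P{z^{a,i}_l}:\max(i-j,0)\leq l\leq i\bigr\}.
\]
A chain of length $i$ contributes exactly one new vertical field at step $j$ iff $i\geq j$, so $\D_j=\sum_{l=j}^k\r_l$; the formula for $m_j$ then follows by a telescoping sum, and $\mathrm{decel}(\CB_\k)=\k$ is immediate from $\D_{j-1}-\D_j=\r_{j-1}$ together with $\D_k=\r_k$.

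For the Cauchy ranks, note that the vertical fields pairwise commute, so a vertical $\P{z^{a,i}_l}\in\CB_\k^{(j)}$ lies in $\ch{\CB_\k^{(j)}}$ iff $[\P{z^{a,i}_l},D]\in\CB_\k^{(j)}$, equivalently $l-1\geq\max(i-j,0)$ (with the convention $\P{z^{a,i}_{-1}}=0$). Because $\r_k\neq 0$, every $j\leq k-1$ admits some chain of length $k>j$, which prevents $D$ from being Cauchy in this range. Tallying the surviving vertical Cauchy fields gives $j$ per chain of length $i>j$ and $i+1$ per chain of length $i\leq j$, so
\[
\chi^j=\sum_{i\leq j}\r_i(i+1)+j\sum_{i>j}\r_i.
\]
Substituting $m_j=1+m+\sum_i\r_i\min(j,i)$ into the proposed identity $\chi^j=2m_j-m_{j+1}-1$ and splitting the sums according to $i\leq j$ vs.\ $i\geq j+1$ reduces the right-hand side to exactly this expression.

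For the intersection bundles, $D\in\CB_\k^{(i-1)}$ but $D\notin\ch{\CB_\k^{(i)}}$ by the previous paragraph, so $D$ is excluded from $\ch{\CB_\k^{(i)}}\cap\CB_\k^{(i-1)}$. Intersecting the remaining vertical Cauchy directions of $\CB_\k^{(i)}$ with $\CB_\k^{(i-1)}$ retains all $\ell+1$ fields from each chain of length $\ell<i$ and the top $i$ fields from each chain of length $\ell\geq i$, giving
\[
\chi^i_{i-1}=\sum_{\ell\leq i-1}\r_\ell(\ell+1)+i\sum_{\ell\geq i}\r_\ell,
\]
which the same arithmetic manipulation identifies with $m_{i-1}-1$. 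The main difficulty in the proof is bookkeeping: consistently tracking how a chain of length $\ell$ interacts with the derived-flag index $j$ across the two regimes $\ell\leq j$ and $\ell>j$, and verifying that the resulting piecewise sums collapse into the clean linear expressions in the $m$'s stated in the proposition.
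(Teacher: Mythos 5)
Your computation is correct. Note that the paper itself offers no proof of this proposition --- it is quoted from [Vassiliou 2006a] as a known characterization --- so your argument is a self-contained verification rather than a variant of an argument in the text. The route you take (explicit dual frame $\{D,\partial_{z^{a,i}_i}\}$, the single bracket relation $[\partial_{z^{a,i}_l},D]=\partial_{z^{a,i}_{l-1}}$, induction for the derived flag, then a per-chain tally of Cauchy directions split by the regimes $i\le j$ versus $i>j$) is the standard and essentially only sensible one, and all of your counts check out: $\Delta_j=\sum_{l\ge j}\rho_l$, $\chi^j=\sum_{i\le j}\rho_i(i+1)+j\sum_{i>j}\rho_i=2m_j-m_{j+1}-1$ using $m_j=1+m+\sum_i\rho_i\min(j,i)$, and the intersection-bundle count collapsing to $m_{i-1}-1$. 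Two small points worth making explicit if you write this up: (i) the claim that $\mathrm{Char}\,\CB_\kappa^{(j)}$ contains no vector with a nonzero $D$-component needs the one-line argument that for $cD+V$ with $V$ vertical, bracketing against the lowest vertical $\partial_{z^{a,k}_{k-j}}$ of a chain of length $k$ forces $c=0$ (your appeal to $\rho_k\neq 0$ is exactly the right ingredient, but you only rule out $D$ itself); and (ii) your stated Cauchy criterion ``$l-1\ge\max(i-j,0)$'' is literally false at $l=0$, where the bracket simply vanishes --- your convention $\partial_{z^{a,i}_{-1}}=0$ covers this, but it is cleaner to state the $l=0$ case separately since those fields are counted in the $i\le j$ tally.
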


\subsection{Goursat Bundles}\label{Goursat-subsect}
Here we provide an brief exposition of the theory of Goursat bundles, \cite{Vassiliou2006a,Vassiliou2006b} used in this paper and a discussion of the relevance of this topic to the present study. Certainly Brunovsk\'y normal forms 
$\mcal{B}_\k\subset TJ^\k$ are the local normal forms of Goursat bundles. But in the first instance the theory of Goursat bundles handles the case in which a distribution is equivalent to some Brunovsk\'y normal form via a {\em general diffeomorphism} of the ambient manifolds. 

A Goursat bundle is described as follows. 
\begin{defn}\cite{Vassiliou2006a}\label{Goursat}
A totally regular sub-bundle $\mathcal{V}\subset TM$ of derived length $k$ with $\Delta_k=1$ will be called a {\bf\it Goursat bundle (of signature $\kappa$)} if:
\begin{enumerate}
\item the sub-bundle $\mathcal{V}$ has the refined derived type of a partial prolongation of $J^1(\mathbb{R},\mathbb{R}^m)$ (as characterized in Proposition \ref{refined derived type numbers}) whose signature is
$\kappa=\mathrm{decel}(\mathcal{V})$;
\item each intersection bundle $\Char{V}{i}_{i-1}:= \mathcal{V}^{(i-1)}\cap\Char{V}{i}$ is an integrable sub-bundle, the rank of which agrees with the corresponding rank of $\mathrm{Char}(\mcal{B}_\k)^{(i)}_{i-1}$. That is, intersection bundle ranks satisfy equations \eqref{linearTypeConstraints}.
\end{enumerate}
\end{defn}
In the case $\Delta_k>1$, the full theory of Goursat bundles in \cite{Vassiliou2006a,Vassiliou2006b} requires one to construct an additional bundle, which will be discussed in section \ref{WeberAndBryantSect}.

Theorem \ref{GGNF} below asserts that Goursat bundles are locally diffeomorphic to the Brunovsk\'y normal forms at generic points; and conversely, every Brunovsk\'y normal form is a Goursat bundle. However, this theorem has nothing to say about the singularities of the related {\em Goursat structures} which have been the subject of recent work; see, for example, \cite{MontgomeryGoursatFlags2001} and citations therein. As is the case for the classical Goursat normal form, the generalized Goursat normal form is concerned with generic local behaviour, in terms of which it geometrically characterizes the partial prolongations of the contact system on $J^1(\B R, \B R^m)$ exclusively in terms of their {\it derived type} \cite{BryantThesis}. Theorem \ref{Goursat SFL} gives necessary and sufficient conditions for when this transformation can be chosen to be static feedback (or orbital feedback by replacing $t$ with a suitable $\tau(t,x)$).

\begin{thm}[Generalized Goursat Normal Form]\label{GGNF}\cite{Vassiliou2006a}
Let $\mathcal{V}\subset TM$ be a Goursat bundle on a manifold $M$, with derived length $k>1$, and signature $\kappa=\mathrm{decel}(\mathcal{V})$. Then there is an open dense subset $\mrm U\subset M$ such that the restriction of $\mathcal{V}$ to $\mrm U$ is locally equivalent to $\mcal{B}_\k$ via a local diffeomorphism of $M$. Conversely, any partial prolongation of $(J^1(\B R, \B R^m), \mcal{B}_{\langle m\rangle})$ is a Goursat bundle. 
\end{thm}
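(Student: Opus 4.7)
The plan is to handle the two implications separately, with the converse serving as a concrete template for the forward direction. For the converse, given a partial prolongation $(J^\k,\CB_\k)$, I would directly compute the refined derived type from the explicit 1-forms $\theta^\alpha_j=dz^\alpha_j-z^\alpha_{j+1}\,dt$. Since $d\theta^\alpha_j=-dz^\alpha_{j+1}\w dt$, each successive derived bundle is obtained by enlarging its predecessor with new coordinate vector fields $\P{z^\alpha_{j+1}}$, and the resulting ranks, Cauchy bundle ranks, and intersection bundle ranks can be read off immediately and matched against Proposition \ref{refined derived type numbers}. The intersection bundle $\mathrm{Char}\,\CB_\k^{(i)}\cap\CB_\k^{(i-1)}$ is spanned by coordinate vector fields $\P{z^\alpha_j}$ restricted to an explicit index range and is therefore manifestly integrable. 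This verifies both items of Definition \ref{Goursat}.

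For the forward direction, I would proceed by induction on the derived length $k$. Because $\CV^{(k)}=TM$ and $\D_k=1$, the top Cauchy bundle $\Char{V}{k-1}$ has controlled corank in $TM$. The first step is to locate a ``time'' 1-form inside $\ann\Char{V}{k-1}$, yielding the independent variable $t$ after Frobenius straightening. The integrable bundle $\Char{V}{k-1}$ itself produces, upon local quotienting, a distribution on a smaller manifold which --- by the refined-type formulas and the integrability hypotheses --- is again a Goursat bundle of derived length $k-1$. Induction supplies Brunovsk\'y coordinates on the quotient; these pull back to $M$, and the new top-order coordinates at level $k$ are obtained by Lie-differentiating the previously identified level-$(k-1)$ functions along the chosen time direction. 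The integrability of $\inChar{V}{k}$ required by Definition \ref{Goursat}(2) is precisely what guarantees that this differentiation produces exactly $\D_k$ functionally independent new coordinates rather than accidentally recovering functions already in hand.

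The main obstacle will be the careful bookkeeping between the integrable intersection bundles and the construction of the \emph{fundamental functions} at each level --- those first integrals of $\Char{V}{i}$ that fail to be first integrals of $\inChar{V}{i}$. One must show that precisely $\r_i$ independent fundamental functions emerge at level $i$ to match the signature $\k=\mathrm{decel}(\CV)$; the rank equations in condition (1) of Definition \ref{Goursat} together with the integrability in condition (2) are exactly what make this count rigid. The open-dense subset $\mrm U\subset M$ enters because one must restrict to points where all the relevant sub-bundles attain their prescribed ranks, where the chosen first integrals admit a local coordinate completion, and where no degeneracies arise in the Frobenius quotients; the singular complement is a closed subset of positive codimension. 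Once a full coordinate system is assembled on $\mrm U$, verifying that $\ann\CV$ is generated by the Brunovsk\'y 1-forms $dz^\alpha_j-z^\alpha_{j+1}\,dt$ reduces to a direct computation in the new coordinates, completing the equivalence with $\CB_\k$.
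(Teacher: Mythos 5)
First, note that the paper itself does not prove Theorem \ref{GGNF}: it is imported from \cite{Vassiliou2006a}, so your attempt can only be checked against the machinery the paper summarizes (fundamental functions, the bundle $\Pi^k$, equations \eqref{ad char}). Your converse direction is fine --- computing the derived flag, Cauchy bundles and intersection bundles of $\CB_\k$ explicitly in contact coordinates and matching them against Proposition \ref{refined derived type numbers} is exactly how that half goes. The forward direction, however, contains a step that fails as written. You propose to induct by quotienting $M$ by the leaves of $\Char{V}{k-1}$ and claim the result is ``again a Goursat bundle of derived length $k-1$.'' Two problems: (i) $\CV$ itself is not projectable under this quotient --- only $\CV^{(k-1)}$ and the higher derived bundles descend, since $\Char{V}{k-1}$ consists of Cauchy characteristics of $\CV^{(k-1)}$, not of $\CV$; and (ii) for a Goursat bundle with $\Delta_k=1$ one has $\rank\Char{V}{k-1}=2m_{k-1}-m_k-1=n-3$ where $n=\dim M$, so the quotient manifold is $3$-dimensional and the projected distribution $\CV^{(k-1)}/\Char{V}{k-1}$ has rank $2$ and derived length $1$, not $k-1$. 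An induction of this shape must run from the bottom of the flag (e.g.\ on $\CV^{(1)}/\Char{V}{1}$, which discards the $\rho_1$ lowest-order variables), or be replaced by the bottom-up construction the paper records: fix $\tau$ and $Z$ with $Z\tau=1$ and generate all higher-order coordinates by applying $\mathrm{ad}(Z)$ to the fundamental functions, as in \eqref{ad char}.

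Two further slips. You describe the fundamental functions of order $j$ as first integrals of $\Char{V}{j}$ that fail to be first integrals of $\inChar{V}{j}$; since $\inChar{V}{j}\subseteq\Char{V}{j}$, that set is empty. The correct statement (and the paper's definition) is the reverse: they are first integrals of the intersection bundle $\inChar{V}{j}$ that are \emph{not} first integrals of $\Char{V}{j}$, and there are exactly $\chi^j-\chi^j_{j-1}=\rho_j$ of them. Finally, at top order you appeal to ``the integrability of $\inChar{V}{k}$ required by Definition \ref{Goursat}(2)''; but that bundle degenerates to $\CV^{(k-1)}$ (every section of $\CV^{(k)}=TM$ is Cauchy, as the paper points out at the start of Section \ref{WeberAndBryantSect}) and it is not among the hypotheses, whose range is $1\leq i\leq k-1$. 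The object that does the work at level $k$ is the fundamental bundle $\Pi^k$, and its integrability is a \emph{consequence} established in the proof of Theorem 4.2 of \cite{Vassiliou2006a}, not an assumption; proving that integrability, and that the resulting functions are functionally independent, is the genuinely hard part of the forward direction and is absent from your sketch.
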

The paper \cite{Vassiliou2006a} establishes the local normal form for Goursat bundles constructively. However, in \cite{Vassiliou2006b}, the construction of local coordinates is streamlined into a nearly algorithmic procedure.


\begin{defn}\cite{Vassiliou2006b}
Let $\mathcal{V}$ be a Goursat bundle of derived length $k$ with $\Delta_k=1$, $\tau$ a first integral of $\Char{V}{k-1}$, and $Z$ any section of $\mathcal{V}$ such that $Z\tau=1$. Then the \textbf{\textit{fundamental bundle}} $\Pi^{k}\subset\mathcal{V}^{(k-1)}$ is defined inductively as
\begin{equation*}
\Pi^{\ell+1}=\Pi^\ell+[\Pi^\ell,Z],\,\,\Pi^0=\inCharOne{V},\,\,0\leq \ell\leq k-1.
\end{equation*}
\end{defn}
The fundamental bundle is sometimes more descriptively referred to as the 
\textbf{\textit{highest order bundle}}. The proof of Theorem 4.2 in \cite{Vassiliou2006a} shows that $\Pi^{k}$ is integrable and has corank 2 in $TM$ while in \cite{Vassiliou2006b} it is proven that in any Goursat bundle, $\ch\CV^{(i)}_{i-1}$ and 
$\Pi^k$ have the form 
\begin{equation}\label{ad char}
\begin{aligned}
\Pi^{(k)}&=\mathrm{span}\{\Pi^0,\mathrm{ad}(Z)\Pi^0,\ldots,\mathrm{ad}(Z)^{k-1}\Pi^0 \},\\
\ch\CV^{(i)}_{i-1}&=\mathrm{span}\{C_0,\mathrm{ad}(Z)C_0,\ldots,\mathrm{ad}(Z)^{i-1}C_0 \},\quad
C_0=\Pi^0,
\quad 1\leq i \leq k-1,
\end{aligned}
\end{equation}
once $Z$ and $\tau$ have been determined.


\begin{defn}
The first integrals $\phi^{\ell_j,j}$ of the quotient bundles $\Xi^{(j)}_{j-1}/\Xi^{(j)}$ (where $\Xi^{(j)}_{j-1}=\ann \ch\CV^{(j)}_{j-1}$ and $\Xi^{(j)}=\ann \ch\CV^{(j)}$) are known as the \textit{\textbf{fundamental functions of order}} $j$. We may also refer to the non-$\tau$ first integral of $\Pi^k$ as a fundamental function of order $k$ and we refer to $d\t$ as the {\it independence condition} for the integral curves of $\CV$. 
\end{defn}

The following gives necessary and sufficient conditions under which a Goursat bundle is \sfl.

\begin{thm}[Geometric characterization of \stf\ linearizable control systems]\label{Goursat SFL}
Let $\CV=\mathrm{span}\{\P t+f^i(t,\bsy{x},\bsy{u})\P {x^i},\ \P {u^1},\ldots,\P {u^m}\}$ be a control system defining a totally regular sub-bundle of $TM$. 
Suppose $(M, \CV)$ is a Goursat bundle of derived length $k>1$ and signature 
$\k=\mathrm{decel}(\CV)$. Then $(M, \CV)$ is locally equivalent to the Brunovsk\'y normal form $\mathcal{B}_\k$ via local diffeomorphism $\varphi: M\to J^\k$. Furthermore, $\varphi$ can be chosen to be an static feedback transformation if and only if 
\begin{enumerate}
\item[1)] $dt\in\mathrm{ ann}\,\ch\CV^{(k-1)}$ if $\r_k=1$
\item[2)] $dt\in\mathrm{ann}\,R(\CV^{(k-1)})$ \ \ \ if $\r_k>1$.
\end{enumerate}
\end{thm}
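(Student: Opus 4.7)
The first claim---existence of a local diffeomorphism $\varphi$ intertwining $\CV$ with $\mathcal{B}_\k$---is immediate from Theorem~\ref{GGNF}. My task is to characterize when $\varphi$ can be chosen so that it additionally preserves the $t$ coordinate and the state/control projection structure of Definition~\ref{SFT def}.

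For necessity, I would argue as follows. Suppose $\varphi$ is an SFT, so $\varphi^* dt = dt$. The Cauchy bundle $\ch\CV^{(k-1)}$ (and, when $\r_k>1$, the resolvent bundle $R(\CV^{(k-1)})$ introduced in Section~\ref{WeberAndBryantSect}) is a diffeomorphism invariant, so it suffices to verify both conditions on the Brunovsk\'y model $\mathcal{B}_\k$. Using the explicit description \eqref{ad char} of the intersection Cauchy bundles, together with the fact that the only generator of $\mathcal{B}_\k$ containing $\partial_t$ is the drift $D_t$, a direct computation shows that every section of $\ch\mathcal{B}_\k^{(k-1)}$ is a combination of the jet-coordinate fields $\partial_{z^j_i}$ when $\r_k=1$, whence $dt$ annihilates it; an analogous computation handles the resolvent bundle when $\r_k>1$. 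Pulling back via $\varphi^*$ then gives the stated condition on $\CV$.

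For sufficiency, my approach is to construct $\varphi$ explicitly via the algorithmic scheme of \cite{Vassiliou2006b}. Under condition 1) or 2), I would set $\tau = t$---which is a legitimate first integral of $\ch\CV^{(k-1)}$ or of $R(\CV^{(k-1)})$ by hypothesis---and take $Z = \partial_t + \sum_i f^i\partial_{x^i}$, so that $Z\tau = 1$ automatically. The map $\varphi$ is then assembled from $\tau$, the fundamental functions $\phi^{\ell_j,j}$ of each order, and their iterated $Z$-derivatives. The SFT form reduces to showing that each $\phi^{\ell_j,j}$ and its $Z$-derivatives up to one below the length of its jet chain are independent of $\bsy{u}$. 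This follows because (i) $\phi^{\ell_j,j}$ is by definition a first integral of $\ch\CV^{(j)}_{j-1}$, and (ii) the integrability and rank requirements of Definition~\ref{Goursat} force $\ch\CV^{(j)}_{j-1}$ to contain exactly the control directions needed to eliminate $\bsy{u}$-dependence at each stage, with $\bsy{u}$-dependence entering only at the top step, where it populates the $\psi$ component of the SFT.

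The principal technical obstacle is this last propagation-of-independence argument: one must track which control directions lie in $\ch\CV^{(j)}_{j-1}$ at each level and verify that bracketing with $Z$ does not disturb $(t,\bsy{x})$-dependence below the top of each chain. A secondary subtlety is the $\r_k>1$ case, which relies on the resolvent bundle being integrable and its first integrals playing the role of $\tau$; once this is established in Section~\ref{WeberAndBryantSect}, the inductive argument parallels the $\r_k=1$ case essentially verbatim.
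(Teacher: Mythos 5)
The paper does not actually prove Theorem~\ref{Goursat SFL}; it is recalled from \cite{Vassiliou2006a,Vassiliou2006b} as part of the background in Section~\ref{Goursat-subsect}, so there is no in-paper proof to compare against line by line. That said, your outline is essentially the construction-based argument of \cite{Vassiliou2006b}: necessity via diffeomorphism-invariance of $\ch\CV^{(k-1)}$ and $R(\CV^{(k-1)})$ plus a direct check on $\mathcal{B}_\k$, and sufficiency by running the coordinate algorithm with $\tau=t$ and $Z$ the drift. The one step you flag as the ``principal technical obstacle'' is genuinely the crux, and the ingredient that closes it is worth naming: since $\ch\CV^{(1)}_0\subseteq\ch\CV^{(1)}\subseteq\ch\CV^{(k-1)}$ (Cauchy bundles of a derived flag are nested, by the Jacobi identity), the hypothesis $dt\in\mathrm{ann}\,\ch\CV^{(k-1)}$ (resp.\ $dt\in\mathrm{ann}\,R(\CV^{(k-1)})$, since the resolvent contains the Cauchy bundle) forces the rank-$m$ bundle $\ch\CV^{(1)}_0\subset\CV$ to be exactly $\mathrm{span}\{\P{u^1},\ldots,\P{u^m}\}$, this being the unique corank-one sub-bundle of $\CV$ annihilated by $dt$. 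Equation \eqref{ad char} then gives $\ch\CV^{(j)}_{j-1}=\mathrm{span}\{C_0,\mathrm{ad}(Z)C_0,\ldots,\mathrm{ad}(Z)^{j-1}C_0\}$ with $C_0=\mathrm{span}\{\P{u^a}\}$, and the identity $\P{u^a}(Z^i\phi)=Z(\P{u^a}Z^{i-1}\phi)+[\P{u^a},Z](Z^{i-1}\phi)$ propagates $u$-independence up each chain exactly as you describe. So your plan is correct and fillable. You might also note that the present paper supplies a second, more conceptual route you did not take: Theorem~\ref{GoursatIFFbryant} together with Theorem~\ref{generalizedGStest} (the generalized S-G-S test) characterizes static feedback linearizability as integrability of $I^{(j)}\oplus\mathrm{span}\{dt\}$ for all $j$, with the top-level Bryant sub-bundle being $\Pi^k$ or $R(\CV^{(k-1)})$; that route avoids tracking the coordinate construction altogether, at the cost of first establishing the Bryant-sub-bundle filtration of Lemma~\ref{BryantBundlesTelescope}.
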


\vskip 3 pt

\section{Quotients of Control Systems with Symmetry}\label{controlSymmetriesSect}
In this section we describe the class of Lie group actions that arise in the study of control systems.
Let $\CV$ be a control system on a manifold $M$. A vector field $X$ on $M$ is said to be an {\it infinitesimal symmetry} of $\CV$ if $\CL_X\CV\subseteq\CV$, where $\CL_X$ denotes the Lie derivative with respect to $X$. If $X$ is an infinitesimal symmetry of $\CV$  then the flow $\vf_t$  of $X$ satisfies 
$\big(\vf_t\big)_*\CV=\CV$, $t\in (-\e,\e)$ for some $\e>0$.

A {\it Lie transformation group} $G$ acting on a manifold $M$ is map $\mu:M\times G\to M$ whose elements
$\mu(x,g):=\mu_g(x)$ form a group; in particular, $\mu_h\circ\mu_g(x)=\mu_{gh}(x)$. In this case the elements $\mu_g$ are said to form a so called {\it right-action}. Saying that a control system $\CV$  is invariant under a Lie transformation group $G$ means that $\big(\mu_g\big)_*\CV=\CV$ for all $g\in G$. 

To the {Lie transformation group} $G$ there is an  associated  {\it Lie algebra} 
$\bsy{\G}_\text{alg}$ of $G$ which is a real, finite-dimensional vector space and consists of all those vector fields on $M$ whose flows make up the transformation group $G$. That is, the elements of $\bsy{\G}_\text{alg}$ are infinitesimal generators of $G$. It can be shown that  a connected Lie transformation group $G$ acting on a manifold $M$ leaves a control system $(M, \CV)$ invariant, 
if and only if $\CL_X\CV\subseteq \CV$, for all $X\in \bsy{\G}_\text{alg}$. In this case we say that $G$ is a 
{\it Lie symmetry group} of $\CV$. If $G$ is a Lie transformation group acting on a manifold $M$ and $x\in M$ is a point then  the set $\CO_x=\{y\in M\mid y=\mu_g(x), \forall g\in G\}$ is said to be the {\it orbit} of $G$ through $x$. Under certain circumstances the set of all orbits of $G$ can be given the structure of a smooth manifold 
$M/G$ such that the map $\pi: M\to M/G$, which assigns each point $x\in M$ to its orbit in $M/G$, is a smooth surjective submersion and is, by its definition,  $G$-invariant: $\pi(x\cdot g)=\pi(x)$ for all $g\in G$, where $x\cdot g$ denotes $\mu_g(x)$. To ensure that our space of orbits $M/G$ - also called the quotient of $M$ by $G$ - is a smooth manifold, we shall require our Lie transformation group actions be {\it regular} \cite{Palais}. This is a technical condition, whose details need not concern us, which ensures that each orbit is a closed and embedded submanifold of $M$.


\subsection{Control admissible symmetries}
Let $\mu:M\times G\to M$ be a smooth, regular right action of a Lie group $G$ on a smooth manifold $M$, \cite{OlverSymmetryBook}, \cite{Palais}. Thus the orbit space $M/G$ is a smooth manifold of dimension $\dim M-\dim G$.\footnote{While regularity guarantees the quotient is a smooth manifold it may nevertheless not have the Hausdorff separation property. In this case we restrict to open sets where this holds. For more details see \cite{OlverSymmetryBook} -- \S3.4.} The {\it\bf quotient} of $\CV$ by the action of $G$  is 
{$\CV/G:=d\pi(\CV)$}, where $d\pi$ is the differential of $\pi$. The latter is a distribution on $M/G$, but not necessarily a control system. One can therefore ask:
when is the $G$-quotient of $\CV$ also a control system?
To answer this we describe the appropriate Lie group action. Let $\bsy{\G}_\text{alg}$ be the Lie algebra of infinitesimal generators of the action of $G$ on $M$ and let $\bsy{\G}$ be the sub-bundle of $TM$ generated by $\bsy{\G}_\text{alg}$. See \cite{OlverSymmetryBook} for information on Lie symmetries of differential equations.

\begin{defn}[Control symmetries, \cite{DTVa}]\label{admissibleSymmetries}
Let $\CV$ be a control system on a manifold $M$ and $G$ a Lie group of symmetries of $\CV$. 
Then $G$ is a {\it\bf control symmetry group} if:
\begin{enumerate}
\item $G$ acts regularly on $M$; 
\item the function $t$ is invariant; i.e., 
$\mu_g^*t=t$ for all $g\in G$, and 
\item $\text{rank}\,\big(d\bsy{p}(\bsy{\G})\big)=\dim G$, where $\bsy{p}$ is the projection $\bsy{p}: M\to \B R\times \mathbf{X}(M)$ given by $\bsy{p}(t,x,u)=(t, x)$ and $\bsy{\G}$ is a sub-bundle of $TM$ generated by $\bsy{\G}_\text{alg}$. \footnote{This ensures that the quotient control system will have $\dim\mb X(M)-\dim G$ state variables and that the number of controls will be preserved in the quotient.} 
\end{enumerate}
\end{defn}

The elements of a control symmetry group are static feedback transformations. That is, they have the form (\cite{DTVa}, Theorem 4.9)
\begin{equation*}
\bar{t}=t,\ \ \bar{x}=\th(t,x), \ \ \bar{u}=\psi(t,x,u)
\end{equation*} 
with infinitesimal generators of the form 
$$
X=\xi^i(t,x)\P {x^i}+\chi^a(t,x,u)\P {u^a}\in\bsy{\G}_\text{alg}.
$$
The class of control symmetries is essential for studying the general properties of smooth control systems under the action of a Lie group.

\begin{rem}
{ There is a further subalgebra 
$\bsy{\Sigma}_\text{alg}\subset\bsy{\G}_\text{alg}$ of {\it state-space symmetries} which is better known \cite{GrizzleMarcus, Elkin}. This is the case $\chi^a\equiv 0$ in the infinitesimal generators of $\bsy{\G}_\text{alg}$. But the restriction to $\bsy{\Sigma}$ is both unnecessary and inadequate for studying the full range of phenomena presented by control systems.}
\end{rem}
We can now give criteria whereby the quotient (symmetry reduction) of a control system by a control symmetry group $G$ is also a control system on the quotient manifold $M/G$.

\begin{thm}[\cite{DTVa}]
Let $\mu:M\times G\to M$ be a group of control symmetries of control system 
$(M,\,\CV)$ defined by (\ref{controlSystem}). Let $\bsy{\G}$, the sub-bundle of infinitesimal generators of 
$G$, satisfy
\[ \bsy{\G}\cap\CV^{(1)}=\mathrm{span}\{0\}, \] 
with
$\dim G<\dim\mathbf{X}(M)$. Then the quotient $(M/G, \CV/G)$ is a control system in which 
\[ \dim\mathbf{X}(M/G)=\dim\mathbf{X}(M)-\dim G, \qquad \dim\mathbf{U}(M/G)=\dim\mathbf{U}(M). \]
\end{thm}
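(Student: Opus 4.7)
The plan is to establish the conclusion in three steps: construct the smooth quotient manifold $M/G$ together with an induced state-space projection; verify that $\CV/G$ has constant rank $m+1$; and then exhibit a local frame for $\CV/G$ in control-system form, which simultaneously yields the two dimension identities.

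\textbf{Step 1 (quotient manifold and dimension counts).} Regularity of the $G$-action endows $M/G$ with a unique smooth manifold structure for which $\pi:M\to M/G$ is a surjective submersion, with $\dim M/G=\dim M-\dim G$. Since $t$ is $G$-invariant, it descends to a function $\bar t$ on $M/G$. Because every generator of $\bsy{\G}_{\text{alg}}$ has the form $\xi^i(t,x)\partial_{x^i}+\chi^a(t,x,u)\partial_{u^a}$, the state-space projection $\bsy{p}:M\to\B R\times\mathbf{X}(M)$ is $G$-equivariant for an induced action on $\B R\times\mathbf{X}(M)$. The hypothesis $\mathrm{rank}\, d\bsy{p}(\bsy{\G})=\dim G$ ensures this induced action is itself locally free and regular, so $(\B R\times\mathbf{X}(M))/G$ is a smooth manifold. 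Setting $\mathbf{X}(M/G):=\mathbf{X}(M)/G$ yields an induced submersion $\bar{\bsy p}:M/G\to\B R\times\mathbf{X}(M/G)$. Dimension arithmetic then gives $\dim\mathbf{X}(M/G)=\dim\mathbf{X}(M)-\dim G$, and defining $\mathbf{U}(M/G)$ as a local fiber of $\bar{\bsy p}$, $\dim\mathbf{U}(M/G)=\dim M/G-1-\dim\mathbf{X}(M/G)=\dim\mathbf{U}(M)$. The strict inequality $\dim G<\dim\mathbf{X}(M)$ ensures $\mathbf{X}(M/G)$ is nonempty.

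\textbf{Step 2 (constant rank of $\CV/G$).} Since $\CV\subseteq\CV^{(1)}$, the transversality hypothesis $\bsy{\G}\cap\CV^{(1)}=\{0\}$ implies $\bsy{\G}\cap\CV=\{0\}$ pointwise. As $\ker d\pi_x=\bsy{\G}_x$ at every $x$, the restriction $d\pi|_{\CV_x}$ is injective. The $G$-invariance of $\CV$ makes $d\pi(\CV_x)$ depend only on $\pi(x)$, so $\CV/G:=d\pi(\CV)$ is a well-defined distribution of constant rank $m+1$ on $M/G$.

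\textbf{Step 3 (control-system frame).} Write $Z=\partial_t+f^i(t,x,u)\partial_{x^i}$ and $\bar Z=d\pi(Z)$. From $Z(t)=1$ and $\pi^*\bar t=t$ one gets $\bar Z(\bar t)=1$, so $\bar Z$ is transverse to the level sets of $\bar t$. The vector fields $\partial_{u^a}$ are tangent to the fibers of $\bsy p$, hence their images $d\pi(\partial_{u^a})$ are tangent to the fibers of $\bar{\bsy p}$; by Step 2 they remain linearly independent, and since the fiber dimension of $\bar{\bsy p}$ is exactly $\dim\mathbf{U}(M)=m$, they span each fiber of $\bar{\bsy p}$ pointwise. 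Choosing local $G$-invariant coordinates $(\bar t,\bar x^i,\bar u^a)$ on $M/G$ adapted to $\bar{\bsy p}$, a pointwise linear change of frame within $\CV/G$ converts $\{\bar Z,d\pi(\partial_{u^a})\}$ into the control-system frame $\{\partial_{\bar t}+\bar f^i(\bar t,\bar x,\bar u)\partial_{\bar x^i},\partial_{\bar u^1},\dots,\partial_{\bar u^m}\}$, which matches the form (\ref{controlSystem}).

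The main obstacle I anticipate lies in the last part of Step 3: passing from the abstract distribution $\CV/G$ to an explicit control-system frame requires selecting appropriate $G$-invariant ``control'' coordinates $\bar u^a(t,x,u)$. Because the $\chi^a$-components of the generators act nontrivially on the $u^a$, the original controls are generally not themselves invariant, and one must appeal to the local theory of regular group actions to produce a maximal functionally independent set of invariants with the correct triangular structure with respect to $\bar{\bsy p}$ -- first exhausting the $(t,x)$-invariants to coordinatize $\mathbf{X}(M/G)$, then completing with invariants that genuinely depend on the $u^a$.
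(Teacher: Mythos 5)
Your argument is correct and is the standard one; note that this theorem is imported from \cite{DTVa} and the present paper gives no proof of it, so there is nothing internal to compare against, but your three steps (regular quotient plus equivariance of $\bsy{p}$, injectivity of $d\pi$ on $\CV$, and the adapted invariant frame) are exactly what the cited result rests on. Two small observations: you only ever use the weaker consequence $\bsy{\G}\cap\CV=\mathrm{span}\{0\}$ of the strong transversality hypothesis, which indeed suffices for the literal statement (the full condition $\bsy{\G}\cap\CV^{(1)}=\mathrm{span}\{0\}$ is what guarantees the controls enter the quotient nondegenerately, as used later in the paper); and $\bar Z=d\pi(Z)$ is not a priori a well-defined vector field since $Z$ itself need not be $G$-invariant, but since $Z t=1$ and $(\CV/G)\cap\ker d\bar t$ is exactly the $m$-dimensional vertical bundle spanned by the $d\pi(\partial_{u^a})$, its class modulo that vertical bundle is well defined, which is all your pointwise change of frame requires.
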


\begin{defn}\label{controlAdmissibleDefn}
An action of a Lie group $G$ on the manifold $M$ is \underline{\it\bf control admissible} for a control system $(M, \CV)$ defined by \eqref{controlSystem} if:
\begin{enumerate}
\item $G$ is a control symmetry group of $\CV$; 
\item $\dim G<\dim\mb X(M)$; 
\item The action of $G$ is {\it\bf strongly transverse}, meaning 
$\bsy{\G}\cap\CV^{(1)}=0$.
\end{enumerate}
\end{defn}

\begin{cor}\label{controlAdmissibleCorollary}
Let $(M, \CV)$ be a control system defined by \eqref{controlSystem}. If a Lie group $G$ is control admissible for $(M, \CV)$ then the quotient $\CV/G$ is a smooth control system on the smooth quotient manifold $M/G$ in which $\dim\mb X(M/G)=\dim M-\dim G$ and $\dim \mb U(M/G)=\dim\mb U(M)$. 
\end{cor}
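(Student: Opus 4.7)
The corollary is essentially a direct consequence of the theorem stated immediately before it, so the plan is to verify that each of the three clauses of control admissibility in Definition \ref{controlAdmissibleDefn} supplies exactly the hypotheses needed, and then to read off the dimension counts from the general theory of regular Lie group actions. First, I would invoke clause (1): since $G$ is a control symmetry group, Definition \ref{admissibleSymmetries} requires the action of $G$ on $M$ to be regular. Standard results on regular actions (see \S3.4 of \cite{OlverSymmetryBook}, cf.\ \cite{Palais}) then guarantee that $M/G$ is a smooth manifold of dimension $\dim M-\dim G$ and that the orbit projection $\pi\colon M\to M/G$ is a smooth surjective submersion. Consequently $\CV/G:=d\pi(\CV)$ is at least a well-defined smooth distribution on $M/G$.

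Second, I would use the particular form of the action to see how this smooth manifold splits into time, state, and control factors. The invariance $\mu_g^*t=t$ from Definition \ref{admissibleSymmetries}(2) means $t$ descends to a function $\bar t$ on $M/G$. The rank condition $\text{rank}\big(d\bsy{p}(\bsy{\G})\big)=\dim G$ in Definition \ref{admissibleSymmetries}(3) says the projection of the infinitesimal generator bundle $\bsy{\G}$ to $\B R\times\mb X(M)$ is injective; combined with the known form $X=\xi^i(t,x)\P{x^i}+\chi^a(t,x,u)\P{u^a}$ for elements of $\bsy{\G}_\text{alg}$ noted just after Definition \ref{admissibleSymmetries}, this means the $G$-orbits do not collapse any purely-control direction. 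Hence $M/G$ inherits a product decomposition $\B R\times\mb X(M/G)\times\mb U(M/G)$ with $\dim\mb X(M/G)=\dim\mb X(M)-\dim G$ and $\dim\mb U(M/G)=\dim\mb U(M)$, and the bound $\dim G<\dim\mb X(M)$ from clause (2) of Definition \ref{controlAdmissibleDefn} is what keeps the state factor of strictly positive dimension so that the quotient is a genuine control system rather than a degenerate object.

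Third, I would observe that the remaining clause, strong transversality $\bsy{\G}\cap\CV^{(1)}=0$ together with $\dim G<\dim\mb X(M)$, is exactly the hypothesis of the theorem stated immediately before the corollary. Invoking that theorem then upgrades $d\pi(\CV)$ from a bare distribution to a control system on $M/G$ with the announced state and control dimensions. There is no real obstacle; every step is either a direct quotation of the preceding theorem or a standard fact about regular quotient manifolds. The only point requiring genuine care is the bookkeeping in the second step that separates the orbit directions absorbed into $\mb X$ from the untouched $\mb U$-directions, and this is precisely what clauses (2) and (3) of Definition \ref{admissibleSymmetries} are designed to enforce.
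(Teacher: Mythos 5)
Your proof is correct and takes essentially the same approach the paper intends: the paper supplies no explicit proof because the corollary is just the immediately preceding theorem restated once Definition \ref{controlAdmissibleDefn} is unpacked into its three clauses, which is exactly what you do (the extra bookkeeping you add about the splitting of $M/G$ into time, state, and control factors is the content of the cited theorem from \cite{DTVa} rather than something to be re-proved here). Note only that your derivation yields $\dim\mb X(M/G)=\dim\mb X(M)-\dim G$, consistent with that theorem, whereas the corollary as printed reads $\dim M-\dim G$ --- an apparent typo in the paper, not a defect in your argument.
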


\begin{exmp}\label{MarinoExample} The control system on $\B R^8$ in 5 states and 2 controls,
\begin{equation*}
\CV=\mathrm{span}\{\P t+(x^5x^3+x^2)\P {x^1}+(x^5x^1+x^3)\P {x^2}+u^1\P {x^3}+x^5\P {x^4}+u^2\P {x^5} ,
\P {u^1} ,\P {u^2}\},
\end{equation*}
has a 5-dimensional Lie group of control symmetries. It is easy to check that the subgroup $G$ generated by the Lie algebra
\[ \bsy{\G}_\text{alg}=\mathrm{span}_\mathbb{R}\{X:=x^1\P {x^1}+x^2\P {x^2}+x^3\P {x^3}+u^1\P {u^1}\} \] is control admissible (Definition \ref{controlAdmissibleDefn}). 
On the $G$-invariant open set $\mrm U\subset \B R^8$ where $x^1\neq 0$, the functions 
\[ t,\ q_1=x^2/x^1,\ q_2=x^3/x^1,\ q_3=x^4, \ q_4=x^5, \ v^1=u^1/x^1,\ v^2=u^2 \] 
are invariant under the action of $G$, which is given by
\[ \bar{x}_1=\e x^1, \ \bar{x}_2=\e x^2,\ \bar{x}_3=\e x^3,\ \bar{x}_4=x^4, \ \bar{x}_5=x^5,\ 
\bar{u}^1=\e u^1,\ \bar{u}^2=u^2, \] 
where $\e\in G$ is an element of the multiplicative group of positive real numbers. If we denote these transformations by $\mu_\e$, then for all $\e\in G$, ${\mu_\e}_*\CV_{|_x}=\CV_{|_{\mu_\e(x)}}$. The functions
$(t, \ q_i, \ v^a)$ form a local coordinate system on an open subset of the quotient manifold 
$M/G$. Furthermore, these functions on $\mrm U\subset\B R^8$ are the components of a local representative of the projection $\pi: \B R^8\to \B R^8/G$ given by,
$\pi_{|_{\mrm U}}(t,x,u)=(t,\ q_i(t,x,u),\ v^a(t,x,u))$. An easy \underline{\tt Maple} computation then gives 
\begin{equation}\label{MarinoQuotient}
\begin{aligned}
d\pi_{|_{\mrm U}}(\CV)&=\CV/G_{|_{\pi(\mrm U)}}=
\mathrm{span}\big\{\P t-(q_1q_2q_4+q_1^2-q_2-q_4)\P {q_1}\\
&-(q_2^2q_4+q_1q_2-v^1)\P {q_2}+q_4\P {q_3}+v^2\P {q_4}, \P {v^1}, \P {v^2}\big\},
\end{aligned}
\end{equation}
a smooth control system on $\pi(\mrm U)\subset M/G$ in accordance with Corollary \ref{controlAdmissibleCorollary}. While $\CV/G$ has 4 states compared to the 5 states of $\CV$, its local form is more complicated, which is typical of a symmetry reduction. However, while $\CV$ is not \sfl, it turns out that \eqref{MarinoQuotient} is \sfl. Thus, the local geometry of $\CV/G$ is radically different from that of 
$\CV$. This fact turns out to have significant consequences in general.
\end{exmp}

\subsection{Relative Goursat bundles}\label{relativeGoursatSect}
Each Brunovsk\'y normal form $\CB_\k$ has trivial Cauchy bundle, $\ch\CB_\k$ $=\mathrm{span}\{0\}$. However, there is an important situation in which a sub-bundle can satisfy all the constraints of a Goursat bundle except for the triviality of its Cauchy bundle. 
\vskip 4 pt
\begin{defn} 
A totally regular sub-bundle $\CV\subset TM$ is a {\it\bf relative Goursat bundle} if it satisfies the requirements of a Goursat bundle (Definition \ref{Goursat}) except for the triviality of its Cauchy bundle. That is, the type number $\chi^0$ need not be equal to zero in a relative Goursat bundle. 
\end{defn}
It is important to note that a relative Goursat bundle has refined derived type satisfying equations (\ref{linearTypeConstraints}).
 
The utility of relative Goursat bundles stems from the ability to very quickly determine the existence of a linearizable quotient $\CV/G$ of a $G$-in\-var\-i\-ant control system $\CV$ without needing to construct 
$\CV/G$ explicitly. In this we always assume that the action of $G$ is control admissible. If its bundle of infinitesimal generators of the action is denoted by $\bsy{\G}$,
then our assumption of strong transversality implies that $\CV^{(1)}\cap\bsy{\G}=0$. We often denote the direct sum $\CV\oplus\bsy{\G}$ by $\wh{\CV}$ and we call $\wh{\CV}$ the {\it\bf augmented system}. 

\begin{thm}[\cite{DTVa}, Theorem 4.5]\label{relGoursatThm}
Suppose that the control system $\CV\subset TM$ has the control admissible Lie group $G$ with sub-bundle of infinitesimal generators $\bsy{\G}$ and satisfies $\ch\CV=0$. 
If $(M, \CV\oplus\bsy{\G})$ is a relative Goursat bundle of derived length $k>1$ and signature
$\k=\mathrm{decel}( \CV\oplus\bsy{\G})$, then there is a local diffeomorphism $\varphi: M/G\to J^\k$ such that 
$\vf_*\left(\CV/G\right)=\CB_\k$.
\end{thm}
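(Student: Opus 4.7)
The strategy is to prove that the quotient $(M/G, \CV/G)$ is itself a Goursat bundle of signature $\kappa$, after which the Generalized Goursat Normal Form (Theorem \ref{GGNF}) will produce the diffeomorphism $\varphi : M/G \to J^\kappa$.

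First I will show that $\bsy{\G} \subseteq \ch\wh{\CV}^{(j)}$ for every $j \geq 0$. The base case follows because $\bsy{\G}$ is involutive (a Lie algebra) and the symmetry condition gives $[\bsy{\G}, \CV] \subseteq \CV$, so $[\bsy{\G}, \wh{\CV}] \subseteq \wh{\CV}$. The inductive step is a Jacobi identity: for $Y \in \bsy{\G}$ and $Z, W \in \wh{\CV}^{(j)}$ one has $[Y, [Z, W]] = [[Y, Z], W] + [Z, [Y, W]] \in \wh{\CV}^{(j+1)}$. Since $\bsy{\G} \subseteq \wh{\CV}^{(j-1)}$ as well, this simultaneously places $\bsy{\G}$ inside each intersection bundle $\ch\wh{\CV}^{(j)}_{j-1}$.

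The second step uses that the vertical bundle $\bsy{\G} = \ker d\pi$ lies inside every Cauchy characteristic of the derived flag to show that $d\pi$ commutes with all the relevant bundle constructions:
\begin{equation*}
(\CV/G)^{(j)} = d\pi(\wh{\CV}^{(j)}), \quad \ch(\CV/G)^{(j)} = d\pi(\ch\wh{\CV}^{(j)}), \quad \ch(\CV/G)^{(j)}_{j-1} = d\pi(\ch\wh{\CV}^{(j)}_{j-1}).
\end{equation*}
Each bundle rank of $\CV/G$ therefore drops by exactly $\dim G$ relative to $\wh{\CV}$, velocities are preserved so $\mathrm{decel}(\CV/G) = \kappa$, and integrability of each $\ch(\CV/G)^{(j)}_{j-1}$ descends from the integrability of $\ch\wh{\CV}^{(j)}_{j-1}$. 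The relative Goursat hypothesis then gives the Brunovsk\'y linear relations of Proposition \ref{refined derived type numbers} for the type numbers of $\wh{\CV}$, with $\chi^0$ allowed to be nonzero. Since every $m_j, \chi^j, \chi^j_{j-1}$ drops by the uniform amount $\dim G$ under the quotient, $\CV/G$ satisfies these same relations. A direct count using $\chi^0(\wh{\CV}) = 2 m_0(\wh{\CV}) - m_1(\wh{\CV}) - 1$ together with $m_0(\CB_\k) = 1+m$, $m_1(\CB_\k) = 1+2m$, and $m_j(\wh{\CV}) = m_j(\CB_\k) + \dim G$ yields $\chi^0(\wh{\CV}) = \dim G$. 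Combined with the containment $\bsy{\G} \subseteq \ch\wh{\CV}$, this forces $\ch\wh{\CV} = \bsy{\G}$ and hence $\ch(\CV/G) = d\pi(\bsy{\G}) = 0$. Therefore $\CV/G$ meets every condition of Definition \ref{Goursat} for a Goursat bundle of signature $\kappa$, and Theorem \ref{GGNF} supplies the desired diffeomorphism with $\vf_*(\CV/G) = \CB_\k$.

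The main obstacle is the second step: justifying rigorously that the derived, Cauchy, and intersection flags of $\CV/G$ are literally the $d\pi$-images of their counterparts upstairs. For the derived and Cauchy bundles this is clean once $\bsy{\G} \subseteq \ch\wh{\CV}^{(j)}$: in $G$-adapted slice coordinates the relevant subbundles are spanned by $G$-invariant sections together with the vertical $\bsy{\G}$-sections, and $d\pi$ is an isomorphism on the $G$-invariant complement. The intersection bundles $\ch\wh{\CV}^{(j)}_{j-1}$ are not \emph{a priori} integrable, so the corresponding pushforward identity requires precisely the containment $\bsy{\G} \subseteq \ch\wh{\CV}^{(j)}_{j-1}$ established in the first step; once that is available, the remaining type-number bookkeeping is routine linear algebra using the Brunovsk\'y constraints.
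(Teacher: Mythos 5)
This theorem is imported from \cite{DTVa} (Theorem 4.5) and the paper gives no proof of its own, so there is nothing to compare line-by-line; however, your strategy is the natural one and matches the descent machinery the paper does use elsewhere (e.g.\ the proof of Theorem \ref{RDTofQuotient} invokes ``Theorem 4.2 of \cite{DTVa}'' for exactly the claim that derived, Cauchy and intersection bundles of $\CV/G$ are the $d\pi$-images of those of $\wh{\CV}$, with every rank dropping by $\dim G$). Your first step ($\bsy{\G}\subseteq\ch\wh{\CV}^{(j)}$ for all $j$, via the Jacobi identity) is correct, and the rank bookkeeping that forces $\ch\wh{\CV}=\bsy{\G}$ and hence $\ch(\CV/G)=0$ checks out --- though note that the identity $m_1(\CB_\k)=1+2m$ rests on $\Delta_1(\CV)$ equalling the number of controls, which is precisely where the hypothesis $\ch\CV=0$ enters; you never invoke that hypothesis explicitly, and without it the count $\chi^0(\wh{\CV})=\dim G$ does not follow.

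The genuine gap is the case $\Delta_k>1$ (equivalently $\rho_k>1$). Definition \ref{Goursat} as quoted only covers $\Delta_k=1$; when $\Delta_k>1$ the Goursat/relative Goursat conditions additionally require an integrable resolvent bundle $R\bigl(\wh{\CV}^{(k-1)}\bigr)$ coming from a Weber structure (Section \ref{WeberAndBryantSect}), and the statement is certainly meant to cover this case --- Theorem \ref{relStatFeedbackLin}(2) and Proposition \ref{linearizableQuotientsProp} (where the quotient has signature $\langle 0,2\rangle$) both use it. Your argument verifies only the refined-derived-type relations and the integrability of the intersection bundles downstairs, which for $\Delta_k>1$ is not sufficient to conclude that $\CV/G$ is a Goursat bundle: you must also show that the singular variety/Weber structure of $\wh{\CV}^{(k-1)}$ descends, i.e.\ that $d\pi\bigl(R(\wh{\CV}^{(k-1)})\bigr)$ is the (integrable) resolvent of $(\CV/G)^{(k-1)}$. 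The ingredients are all available --- $R(\wh{\CV}^{(k-1)})$ contains $\ch\wh{\CV}^{(k-1)}\supseteq\bsy{\G}$, is $G$-invariant and integrable, so it pushes down to an integrable corank-one sub-bundle of $(\CV/G)^{(k-1)}$, which by Theorem \ref{BryantIffWeber} is the Bryant sub-bundle and hence the resolvent --- but this step is absent, and Example \ref{example1} shows that correct refined derived type alone does not guarantee a Goursat bundle, so it cannot be waved away.
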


An important observation is that even if $\CV$ is not a Goursat bundle, it happens very often that $\CV\oplus\bsy{\G}$ {\it is} a relative Goursat bundle and this can be very significant. However, the local diffeomorphism $\vf$ guaranteed by Theorem \ref{relGoursatThm} need not be a static feedback transformation. To guarantee the existence of such a transformation one imposes slightly more constraints on the relative Goursat bundle. The following generalizes Theorem \ref{relGoursatThm} to the case of {\it \stf\ relative Goursat bundles} and may be regarded as an $``\text{infinitesimal test}"$ for the existence of \sfl\ quotient control systems.

\begin{thm}[\cite{DTVa}]\label{relStatFeedbackLin}
Let the control system $\CV\subset TM$ admit the control admissible Lie group $G$ with sub-bundle of infinitesimal generators $\bsy{\G}$, and satisfy $\ch\CV=0$. Set $\wh{\CV}:=\CV\oplus\bsy{\G}$ and
suppose that $(M, \wh{\CV})$ is a relative Goursat bundle of derived length $k>1$ and signature 
$\k=\mathrm{decel}(\wh{\CV})$. Then the local diffeomorphism $\varphi: M/G\to J^\k$ that identifies 
$\CV/G$ with its Brunovsk\'y normal form
can be chosen to be a static feedback transformation if and only if 
\begin{enumerate}
\item $dt\in\mathrm{ ann}\,\ch\wh{\CV}^{(k-1)}$ if $\Delta_k=1$
\item $dt\in\mathrm{ ann}\,R\left(\wh{\CV}^{(k-1)}\right)$ if $\Delta_k>1$.
\end{enumerate}
\end{thm}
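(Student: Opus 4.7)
The plan is to reduce the claim to Theorem \ref{Goursat SFL} applied directly to the Goursat bundle $\CV/G$ on $M/G$, and then translate the resulting downstairs conditions back upstairs via the projection $\pi:M\to M/G$. By Theorem \ref{relGoursatThm} there is already a local diffeomorphism $\varphi:M/G\to J^\k$ with $\varphi_*(\CV/G)=\CB_\k$; in particular $\CV/G$ is itself a Goursat bundle of signature $\k$ on $M/G$. Since $t$ is $G$-invariant by Definition \ref{admissibleSymmetries}(2), it descends to a time coordinate $\bar t$ on $M/G$ satisfying $dt=\pi^*\,d\bar t$. Theorem \ref{Goursat SFL} applied to $(M/G,\CV/G)$ then asserts that $\varphi$ may be chosen as an SFT iff $d\bar t\in\ann\ch(\CV/G)^{(k-1)}$ (when $\Delta_k=1$) or $d\bar t\in\ann R((\CV/G)^{(k-1)})$ (when $\Delta_k>1$). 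The proof therefore reduces to showing that these downstairs conditions are equivalent respectively to the stated upstairs conditions involving $\wh{\CV}$.

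The bridge between upstairs and downstairs is the inclusion $\bsy{\G}\subset\ch\wh{\CV}^{(j)}$ for every $j\geq 0$. Since $G$ is a symmetry group one has $[\bsy{\G}_\text{alg},\CV]\subset\CV$; regularity of the action makes $\bsy{\G}$ integrable so $[\bsy{\G},\bsy{\G}]\subset\bsy{\G}$; hence $[\bsy{\G},\wh{\CV}]\subset\wh{\CV}$, and a straightforward induction on $j$ propagates this to every derived bundle. Because $\ker d\pi=\bsy{\G}$ fiberwise, it follows that $d\pi(\wh{\CV}^{(j)})=(\CV/G)^{(j)}$ at every stage. A rank count carried out using Proposition \ref{refined derived type numbers} together with the relative Goursat hypothesis on $\wh{\CV}$ then yields $d\pi(\ch\wh{\CV}^{(j)})=\ch(\CV/G)^{(j)}$, and analogously for the resolvent bundle $R(\wh{\CV}^{(k-1)})$, whose construction is defined solely in terms of the derived flag and Cauchy bundles.

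Since $dt$ is basic with respect to $\pi$, a basic 1-form annihilates a $G$-invariant sub-bundle on $M$ containing $\bsy{\G}$ iff its projection annihilates the projected sub-bundle on $M/G$; combining this with the projection identities above converts the downstairs conditions of Theorem \ref{Goursat SFL} into precisely the upstairs conditions stated in the present theorem, completing the argument. The main obstacle is establishing the identity $d\pi(\ch\wh{\CV}^{(j)})=\ch(\CV/G)^{(j)}$ and its resolvent analog: the inclusion $\subseteq$ is immediate from Lie-bracket naturality, but the reverse inclusion requires careful rank accounting to show that the relative Goursat type numbers $[m_j,\chi^j_{j-1},\chi^j]$ of $\wh{\CV}$ descend exactly to the corresponding Goursat type of $\CV/G$, so that no spurious Cauchy characteristics appear on the quotient.
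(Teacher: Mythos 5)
This theorem is imported from \cite{DTVa} and the paper supplies no proof of its own, so there is nothing internal to compare against; judged on its merits, your argument is correct and is exactly the route the paper gestures at when it calls this the ``relative version'' of the downstairs characterization: apply Theorem \ref{Goursat SFL} to the Goursat bundle $\CV/G$ and transport the conditions through $\pi$ using $\bsy{\G}\subseteq\ch\wh{\CV}^{(j)}$ and $d\pi(\ch\wh{\CV}^{(j)})=\ch\,(\CV/G)^{(j)}$. One small simplification: the reverse inclusion you flag as needing ``careful rank accounting'' follows directly, since $\wh{\CV}^{(j)}=\CV^{(j)}+\bsy{\G}$ (by induction, using that $\bsy{\G}$ consists of symmetries and is involutive) implies any projectable lift $C$ of a downstairs Cauchy vector satisfies $[C,Y]\in\wh{\CV}^{(j)}+\bsy{\G}=\wh{\CV}^{(j)}$ for projectable $Y$, and $[C,\bsy{\G}]\subseteq\wh{\CV}^{(j)}$ is automatic from $\bsy{\G}\subseteq\ch\wh{\CV}^{(j)}$ --- no rank count is required.
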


Theorem \ref{relStatFeedbackLin} (\cite{DTVa}, Theorem 4.12) is a geometric characterization of static feedback linearizable quotients of an invariant control system. It is the relative version of Theorem \ref{relGoursatThm}, applied to group quotients. In practice, an invariant control system has many SFL quotients depending on the number of subgroups that satisfy Theorem \ref{relStatFeedbackLin}. 
The final item in the above list, pertaining to the case $\Delta_k>1$, is not required until section \ref{WeberAndBryantSect}, where it will be defined.

\begin{defn}\label{StFrelGoursatDefn}
If a relative Goursat bundle satisfies the hypotheses of Theorem \ref{relStatFeedbackLin} then we call it a 
{\it\bf \stf\ relative Goursat bundle}. 
\end{defn}
An important point is that \stf\ relative Goursat bundles are very easy to identify in practice, once the sub-bundle $\bsy{\G}$ is known. Hence, \sfl\ quotients $\CV/G$ are similarly quickly identified.
\vskip 4 pt
\begin{exmp}\label{ExmpResolventRelativeGoursat}
This example illustrates the various constructions encountered so far. Firstly, we look at a system which is linearizable but {\it not} by \stf\ transformations. 

The control system, 
\begin{equation}\label{CharletExample}
\CV=\mathrm{span}\left\{\P t+x^2\P {x^1}+u^1\P {x^2}+u^2\P {x^3}+x^3(1-u^1)\P {x^4}, \P {u^1}, \P {u^2}\right\},
\end{equation}
occurs in \cite{CharletLevineMarino2}. Applying one of the well known tests, for instance, \cite{SluisTilbury} -- Theorem 1 or Theorem \ref{Goursat SFL}, or otherwise, shows that $\CV$ is not \sfl. 

It is easy to see that $X=\P {x^4}$ is an infinitesimal symmetry of $\CV$ defined by (\ref{CharletExample}); that is, $\CL_X\CV\subseteq\CV$. In fact it is an infinitesimal control admissible symmetry. Let us then consider the augmented distribution 
\[ \wh{\CV}:=\CV\oplus\mathrm{span}\{X\}, \] 
as in Theorems \ref{relGoursatThm} and \ref{relStatFeedbackLin}.

While $\wh{\CV}$ is {\it not a control system}, the generalized Goursat normal form can be applied to any smooth sub-bundle $\CV\subset TM$ whose generic solutions are smoothly immersed curves, as in this case. Indeed, we find that the refined derived type of $\wh{\CV}$ is
$$
\mathfrak{d}_r(\wh{\CV})=[[4, 1], [6, 3, 4], [7, 7]].
$$
This is not the refined derived type of a Brunovsk\'y normal form, however it satisfies equations 
(\ref{linearTypeConstraints}) with signature $\text{decel}(\wh{\CV})=\langle 1, 1\rangle$ and derived length $k=2$. Furthermore, we have
exactly one non-trivial intersection bundle $\ch\wh{\CV}^{(1)}_0=\mathrm{span}\{\P {u^1}, \P {u^2}, X\}$ which is integrable, and
\[
\ch\wh{\CV}^{(1)}=\ch\wh{\CV}^{(1)}_0\oplus\mathrm{span}\{\P {x^3}\}.
\] 
Since $\r_2=1$, we check that $t$ is an invariant of
$\ch\wh{\CV}^{(1)}$, and conclude by Theorem \ref{relStatFeedbackLin} that
$\wh{\CV}$ is a {\em \stf\ relative Goursat bundle} of signature $\langle 1, 1\rangle$ which proves that 
$\CV/G$ is static feedback equivalent to $\CB_{\langle 1, 1\rangle}$ where $G$ is the Lie transformation group generated by $X=\P {x^4}$.
\end{exmp}


\section{Weber Structures and Bryant sub-bundles}\label{WeberAndBryantSect}
In a Goursat bundle of derived length  $k$, the parameter $\rho_k$ represents the number of 1-forms of highest order $k$ in the Brunovsky normal form $\bsy{\beta}_\k$.  There are two cases to consider according to whether $\rho_k\geq 1$.  The case $\rho_k>1$ requires one to consider the so called resolvent bundle, introduced in \cite{Vassiliou2006a}, which is a sub-bundle of $\CV^{(k-1)}$. The necessity of this construction arises from the fact that the intersection bundle $\ch\CV^{(k)}_{k-1}$, no longer makes sense because $\CV^{(k)}$ spans the whole tangent space so that every vector field in $\CV^{(k)}$ is Cauchy and therefore 
$\ch\CV^{(k)}_{k-1}=\CV^{(k-1)}$. In this section we describe the resolvent bundle and then go on to explain its relation to an alternative geometric characterization of the resolvent, the so called Bryant sub-bundles \cite{BryantThesis}; see also \cite{RespondekContact}.

\begin{defn}\label{struct-tensor-def}
Let $M$ be a smooth manifold and $\CV\subset TM$ a smooth sub-bundle. The structure tensor is the homomorphism of vector bundles $\delta:\L^2\CV\to TM/\CV$
defined by
$$
\delta(X,Y)=[X,Y]\mod\CV,\ \text{for}\ \ X,Y\in\G(M,\CV).
$$
\end{defn}
\noindent {\it\bf The singular variety.}
We can formulate the Cauchy bundles by defining
$$
\s:\CV\to\text{Hom}(\CV,TM/\CV)\ \ \text{by}\ \ \s(X)(Y)=\delta(X,Y).
$$
and assuming $\s$ has constant rank then the Cauchy bundle $\ch\CV$ is its kernel. 

For each $x\in M$, let
$$
\CS_x=\{v\in\CV_x\backslash 0~|~\s(v)\ \text{has less than generic rank}\}
$$
Then $\CS_x$ is the zero set of homogeneous polynomials and so defines a subvariety of the projectivisation 
$\B P\CV_x$ of $\CV_x$. We shall denote by Sing$(\CV)$ the fiber bundle over $M$ with fiber over $x\in M$ equal to 
$\CS_x$ and we refer to it as the {\it singular variety} of $\CV$. For $X\in \CV$ the matrix of the homomorphism $\s(X)$ will be called the {\it polar matrix} of $[X]\in\B P\CV$. There is a map 
$\text{deg}_\CV:\B P\CV\to\B N$ well defined by
$$
\text{deg}_\CV([X])=\text{rank}~\s(X)\ \ \ \text{for}\ \ [X]\in\B P\CV.
$$
We shall call $\text{deg}_\CV([X])$ the {\it degree} of $[X]$. The singular variety $\text{Sing}(\CV)$ is a diffeomorphism invariant in the sense that if $\CV_1,\CV_2$ are sub-bundles over
$M_1,M_2$, respectively and there is a diffeomorphism $\phi: M_1\to M_2$ that identifies them, then 
$\text{Sing}(\CV_2)$ and $\text{Sing}(\phi_*\CV_1)$ are equivalent as projective subvarieties of $\B P\CV_2$. That is, for each $x\in M_1$, there is an element of the projective linear group $PGL({\CV_2}_{|_{\f(x)}},\B R)$ that identifies 
$\text{Sing}(\CV_2)(\f(x))$ and $\text{Sing}(\phi_*\CV_1)$.
\vskip 5 pt
We hasten to point out that the computation of the singular variety for any given sub-bundle $\CV\subset TM$ is algorithmic. That is, it involves only differentiation and commutative algebra operations. In practice, one computes the {determinantal variety} of the polar matrix for generic $[X]$. The {\it determinantal variety}
refers to the subvariety of $\B P\CV$ upon which the polar matrix $\s(X)$ has less than maximal rank, or equivalently, $X$ has less than maximal degree.
\vskip 10 pt

\noindent {\it\bf The singular variety in positive degree.} If $X\in\ch\CV$ then $\text{deg}_\CV([X])=0$. For this reason we pass to the quotient $\widehat{\CV}:=\CV/\ch\CV$. We have structure tensor 
$\widehat{\delta} :\L^2\widehat{\CV}\to \widehat{TM}/\widehat{\CV}$, well defined by
$$
\widehat\delta(\widehat{X},\widehat{Y})=\pi([X,Y])\mod\widehat{\CV},
$$
where $\widehat{TM}=TM/\ch\CV$ and $\pi: TM\to \widehat{TM}$ is the canonical projection. The notion of degree descends to this quotient giving a map $\text{deg}_{\widehat{\CV}}:\B P\widehat{\CV}\to\B N$ well defined by
$$
\text{deg}_{\widehat{\CV}}([\widehat{X}])=\text{rank}~\widehat{\s}(\widehat{X})\ \ \ \ \text{for}\ \ \ \
[\widehat{X}]\in\B P\widehat{\CV},
$$
where $\widehat{\s}(\widehat{X})(\widehat{Y})=\widehat{\delta}(\widehat{X},\widehat{Y})$ for $\widehat{Y}\in\widehat{\CV}$. Note that all definitions go over {\it mutatis mutandis} when the structure tensor $\delta$ is replaced by
$\widehat{\delta}$. In particular, we have notions of polar matrix and singular variety, as before. However, if the singular variety of $\widehat{\CV}$ is not empty, then each point of 
$\B P\widehat{\CV}$ has degree one or more. 
\vskip 10 pt

\begin{defn}[Weber structure] Suppose $\CV\subset TM$ is totally regular of rank $c+q+1$, $q\geq 2, c\geq 0$. Suppose further that $\CV$ satisfies
\begin{enumerate}
\item $\text{rank}\,\CV^{(1)}=c+2q+1$, $\dim\ch\CV=c$
\item 
$\widehat{\Sg}:=\text{Sing}(\widehat{\CV})=\B P\widehat{\CB}\approx\B R\B P^{q-1}$ for some rank $q$ sub-bundle $\widehat{\CB}\subset\widehat{\CV}$ such that each point of $\wh{\Sg}$ has degree 1.
 \end{enumerate}
Then we call $(\CV,\B P\widehat{\CB})$ (or $(\CV,\widehat{\Sg})$) a
{\it Weber structure} of rank $q$ on $M$. 
\end{defn}
\begin{defn}[Resolvent bundle]  
Given a Weber structure
$(\CV,\B P\widehat{\CB})$, let $R(\CV)\subset\CV$, denote the largest sub-bundle such that
\begin{equation}\label{resolventDefn}
{\pi}\big( R(\CV) \big)= \widehat{\CB}.
\end{equation}
We call the rank $q+c$ bundle $R(\CV)$ defined by \eqref{resolventDefn} the {\it resolvent bundle} associated to the Weber structure $({\CV},\widehat{\Sg})$. The bundle $\widehat{\CB}$ determined by the singular variety of 
$\widehat{\CV}$ will be called the {\it singular sub-bundle} of the Weber structure. A Weber structure
will be said to be {\it integrable} if its resolvent bundle is integrable.
\end{defn}
An {\it integrable} Weber structure descends to the quotient of $M$ by the leaves of $\ch\CV$ to be the contact bundle on $J^1(\B R,\B R^q)$. The term honours Eduard von Weber (1870 - 1934) who appears to be the first person to publish a proof of the Goursat normal form (see \cite{Vassiliou2006a}).

\begin{prop}[\cite{Vassiliou2006a}]
Let $(\CV, \B P\wh{\CB})$ be an integrable Weber structure on manifold $M$. Then its resolvent $R(\CV)$ is the unique maximal, integrable sub-bundle of $\CV$.
\end{prop}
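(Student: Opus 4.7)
The plan is to show that every integrable sub-bundle $\CI\subseteq \CV$ is either contained in $R(\CV)$ or has rank strictly smaller than that of $R(\CV)$. Because $R(\CV)$ is itself integrable by hypothesis and has rank $q+c$, this establishes it as the unique maximal integrable sub-bundle. The key mechanism is that integrability of $\CI$ forces a dimension drop in the structure tensor, which the Weber condition translates into membership in the singular sub-bundle $\wh{\CB}$.

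I would begin with the structure-tensor bound. Since $\ch\CV\subset \CV\subset \CV^{(1)}$, the image of $\wh{\delta}$ is contained in the rank-$q$ quotient $\wh{\CV}^{(1)}/\wh{\CV}\cong \CV^{(1)}/\CV$, so $\wh{\s}(\wh{X})$ is a linear map from the rank-$(q+1)$ bundle $\wh{\CV}$ into a rank-$q$ bundle, with $\wh{X}$ automatically in its kernel. Hence the maximum possible rank of $\wh{\s}(\wh{X})$ is $q$, and this generic rank is attained on the complement of the singular variety $\wh{\Sg}=\B P\wh{\CB}\approx \B R\B P^{q-1}$. The Weber-structure hypothesis further tells us the degree takes only the two values $q$ (generically) and $1$ (on $\wh{\Sg}$).

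Now suppose $\CI\subseteq \CV$ is integrable and write $\wh{\CI}=\pi(\CI)\subseteq\wh{\CV}$. For any $X,Y\in\CI$, integrability forces $[X,Y]\in\CI\subseteq\CV$, so $\delta(X,Y)=0$ and, on descending to $\wh{\CV}$, $\wh{\s}(\wh{X})(\wh{Y})=0$ for all $\wh{X},\wh{Y}\in\wh{\CI}$. Therefore $\wh{\CI}\subseteq\ker \wh{\s}(\wh{X})$ for every $\wh{X}\in\wh{\CI}$, yielding the fiberwise estimate
\[
\text{rank}\,\wh{\s}(\wh{X})\;\leq\;(q+1)-\text{rank}\,\wh{\CI}.
\]
If $\text{rank}\,\wh{\CI}\geq 2$ then $\text{rank}\,\wh{\s}(\wh{X})\leq q-1<q$, so by the dichotomy above $[\wh{X}]\in\wh{\Sg}=\B P\wh{\CB}$ for every nonzero $\wh{X}\in\wh{\CI}$; hence $\wh{\CI}\subseteq\wh{\CB}$, and $\CI\subseteq \pi^{-1}(\wh{\CB})\cap \CV=R(\CV)$, using the explicit identification of $R(\CV)$ as the rank-$(q+c)$ preimage of $\wh{\CB}$ under $\pi|_{\CV}$. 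If instead $\text{rank}\,\wh{\CI}\leq 1$ then $\text{rank}\,\CI\leq c+1<c+q=\text{rank}\,R(\CV)$, so $\CI$ is strictly smaller than the integrable $R(\CV)$ and cannot be maximal.

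The main obstacle I anticipate is the standard totally-regular bookkeeping: the fiberwise dimension count requires $\wh{\CI}$ to have locally constant rank, so one must restrict to the open dense subset of $M$ on which all relevant sub-bundles are regular. This is compatible with the totally regular hypothesis on $\CV$, so no genuine difficulty arises beyond carefully tracking ranks and invoking the precise form of the Weber-structure degree hypothesis at the crucial step.
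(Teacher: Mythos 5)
The paper states this proposition without proof, citing \cite{Vassiliou2006a}, so there is no in-paper argument to compare against; your proof follows what is essentially the standard route, and its overall structure is sound. Integrability of $\CI$ annihilates the structure tensor on $\wh{\CI}\times\wh{\CI}$, so once $\mathrm{rank}\,\wh{\CI}\geq 2$ every nonzero direction of $\wh{\CI}$ is a singular direction, forcing $\wh{\CI}\subseteq\wh{\CB}$ and hence $\CI\subseteq R(\CV)=(\pi|_{\CV})^{-1}(\wh{\CB})$; and integrable sub-bundles with $\mathrm{rank}\,\wh{\CI}\leq 1$ have rank at most $c+1<c+q$ since $q\geq 2$. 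This correctly establishes that $R(\CV)$ is the unique integrable sub-bundle of maximal rank, which is the right reading of ``unique maximal'' (it cannot mean that every integrable sub-bundle is contained in $R(\CV)$ --- a generic line field is integrable and need not lie in $R(\CV)$ --- and your dichotomy handles exactly this point).

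There is, however, one step you assert rather than prove, and the dichotomy genuinely depends on it: you claim the generic degree on $\B P\wh{\CV}$ equals $q$ because $q$ is the \emph{maximum possible} rank of $\wh{\s}(\wh{X})$. But $\wh{\Sg}$ is by definition the locus of \emph{less-than-generic} (i.e., less than maximal attained) degree, so the implication ``$\mathrm{rank}\,\wh{\s}(\wh{X})\leq q-1$ implies $[\wh{X}]\in\wh{\Sg}$'' requires that the maximum $q$ actually be attained; if the generic degree were some $d\leq q-1$, your estimate would not place $[\wh{X}]$ in the singular variety. The needed fact does follow from the Weber axioms, by the computation appearing in the proof of Theorem \ref{BryantIffWeber}: each $[\wh{B}_i]$ having degree $1$ confines $[\wh{B}_i,\wh{\CV}]$ to a line $\mathrm{span}\{Z_i\}$ modulo $\wh{\CV}$; since all such brackets must span the rank-$q$ quotient $\wh{\CV}^{(1)}/\wh{\CV}$, the $Z_i$ are linearly independent, whence $[\wh{B}_i,\wh{B}_j]\equiv 0$ and, because $\ch\wh{\CV}=0$, one has $[\wh{T},\wh{B}_i]\equiv\l_i Z_i$ with $\l_i\neq 0$ for any $\wh{T}$ completing $\wh{\CB}$ to a basis of $\wh{\CV}$. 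Hence $\wh{\s}(\wh{T})$ has rank $q$ and the generic degree is indeed $q$. With that observation inserted (or with an explicit appeal to the proof of Theorem \ref{BryantIffWeber}), your argument is complete.
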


We will now review some results of R. Bryant \cite{BryantThesis} that will play a role in this paper; see also \cite{RespondekContact} and \cite{BC3G}. 

\begin{defn}
Let $\CV\subset TM$ be sub-bundle over manifold $M$. A corank 1 sub-bundle $\CB\subset \CV$ will be called a {\it Bryant sub-bundle} or subdistribution if $[\CB, \CB]\equiv 0\mod \CV$.
\end{defn}

\begin{prop}[Bryant]\label{bryant1}
Let $\CV\subset TM$ be a smooth distribution over manifold $M$ of rank $m_0$ and $\CV^{(1)}$ its derived distribution of rank $m_1\geq m_0+1$. Suppose there is a corank 1 subdistribution $\CB\subset \CV$ satisfying 
$[\CB,\CB]\subset \CV$. Then $\ch\CV\subset\CB$ and $\mathrm{rank}\,\ch\CV=2m_0-m_1-1$.
\end{prop}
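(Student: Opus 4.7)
The plan is to reduce the question to a single linear algebra computation on $\CB$ by exploiting the codimension-1 inclusion $\CB\subset\CV$. Working locally, choose a vector field $Y_0\in\CV$ that is nowhere tangent to $\CB$, so that $\CV=\CB\oplus\langle Y_0\rangle$ as a $C^\infty(M)$-module. Because the structure tensor $\delta(X,Y)=[X,Y]\mod\CV$ is skew-symmetric and $C^\infty(M)$-bilinear, and vanishes identically on $\Lambda^2\CB$ by hypothesis, every value of $\delta$ on $\CV$ is determined by the values $[X,Y_0]\mod\CV$ with $X\in\CB$. This defines a bundle map
\[
\tilde\delta:\CB\longrightarrow TM/\CV,\qquad X\longmapsto [X,Y_0]\mod\CV,
\]
whose image is exactly $\CV^{(1)}/\CV$. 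Since $\CV^{(1)}$ has constant rank $m_1$, the image has rank $m_1-m_0$ and the kernel has rank $(m_0-1)-(m_1-m_0)=2m_0-m_1-1$. If we prove $\ch\CV=\ker\tilde\delta$, both conclusions of the proposition follow at once.

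For the inclusion $\ker\tilde\delta\subseteq\ch\CV$, take $X\in\CB$ with $[X,Y_0]\in\CV$ and an arbitrary $Z=Z'+cY_0\in\CV$ with $Z'\in\CB$. Using the Leibniz rule,
\[
[X,Z]=[X,Z']+X(c)\,Y_0+c\,[X,Y_0],
\]
where $[X,Z']\in\CV$ by the hypothesis $[\CB,\CB]\subset\CV$, $X(c)Y_0\in\CV$ trivially, and $c[X,Y_0]\in\CV$ by choice of $X$. Hence $[X,Z]\in\CV$, so $X\in\ch\CV$.

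The reverse inclusion $\ch\CV\subseteq\ker\tilde\delta$ has two parts, and this is where the main obstacle lies: one must first show $\ch\CV\subseteq\CB$, because without this we cannot even apply $\tilde\delta$. So let $X\in\ch\CV$ and decompose $X=X'+dY_0$ with $X'\in\CB$ and $d\in C^\infty(M)$. For any $Y\in\CB$, expand
\[
[X,Y]=[X',Y]+d\,[Y_0,Y]-Y(d)\,Y_0.
\]
The first and third terms lie in $\CV$ ($[\CB,\CB]\subset\CV$ and $Y(d)Y_0\in\CV$), so the Cauchy condition $[X,Y]\in\CV$ forces $d\cdot[Y_0,Y]\equiv 0\mod\CV$ for every section $Y$ of $\CB$. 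The hypothesis $m_1\geq m_0+1$ says that $\tilde\delta$ has nonzero image at every point, so at each $p\in M$ one can select a local section $Y\in\Gamma(\CB)$ with $[Y_0,Y](p)\notin\CV_p$; the pointwise equation then yields $d(p)=0$. Hence $d\equiv 0$ and $X=X'\in\CB$. Once $X\in\CB$, the Cauchy condition applied to $Z=Y_0$ immediately gives $[X,Y_0]\in\CV$, i.e.\ $X\in\ker\tilde\delta$.

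Combining the two inclusions gives $\ch\CV=\ker\tilde\delta\subseteq\CB$, and the rank count above yields $\mathrm{rank}\,\ch\CV=2m_0-m_1-1$, completing the proof.
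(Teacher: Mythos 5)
Your proof is correct, and it rests on the same basic mechanism as the paper's: split $\CV=\CB\oplus\langle Y_0\rangle$ and use the tensoriality of the bracket modulo $\CV$ together with $[\CB,\CB]\subset\CV$ to see that all of $\CV^{(1)}/\CV$ is produced by bracketing $\CB$ against the complementary direction. The packaging, however, is genuinely different. The paper runs two separate arguments: it shows $\ch\CV\subset\CB$ by contradiction (a Cauchy vector outside $\CB$ would force $\CV^{(1)}=\CV$), and then counts generators of $\CV^{(1)}$ over a basis $\ch\CV\oplus\mathrm{span}\{B_j\}$ of $\CB$ to get the rank. You instead organize everything around the single bundle map $\tilde\delta:\CB\to TM/\CV$, prove the identification $\ch\CV=\ker\tilde\delta$ in both directions, and read off both conclusions from rank--nullity. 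Your route buys two things: first, the inclusion $\ker\tilde\delta\subseteq\ch\CV$ is exactly the fact the paper uses implicitly when it asserts that the classes $[X,B_j]\bmod\CV$ are linearly independent (so that $m_0-1-c=r$) --- your version makes that step explicit rather than leaving it to the reader; second, the equality $\ch\CV=\ker\tilde\delta$ is a reusable statement (it is essentially the "polar matrix" point of view of Section \ref{WeberAndBryantSect}). What the paper's version buys is a shorter, more elementary argument for $\ch\CV\subset\CB$: one line by contradiction, versus your pointwise coefficient analysis showing $d\equiv 0$ (which is also fine, and correctly invokes $m_1\geq m_0+1$ to guarantee $\tilde\delta$ is nonzero at every point). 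No gaps.
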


\begin{proof}
Assume that $\CB\subset\CV$ is a Bryant sub-bundle and let vector field $X$ extend $\CB$ to a basis for 
$\CV$.
Since $[\CB, \CB]\subset\CV$, we have $\CV^{(1)}=\CV+[X, \CB]$ and we observe that since
$\text{rank}\,\CB=m_0-1$ that $\text{rank}\,\CV^{(1)}\leq \text{rank}\,\CV+m_0-1$ and hence
$r:=m_1-m_0\leq m_0-1$.

Suppose there is a nonzero vector field $\xi\in\ch\CV$ that does not belong to $\CB$. This means we can extend $\CB$ by $\xi$ to make a basis for $\CV$ in which case we have that $\CV^{(1)}=\CV+[\xi,\CB]$. But then $[\xi,\CB]\subset\CV$ in which case we obtain the contradiction $\CV^{(1)}=\CV$.
Hence $\ch\CV\subset\CB$.

Suppose that $\text{rank}\,\ch\CV=c$ and spanned by $\{\xi_1,\ldots,\xi_c\}$. Then for some vector fields
$B_1,\ldots B_{m_0-1-c}$ we have
$$
\CB=\mathrm{span}\left\{\xi_1,\ldots,\xi_c\right\}\oplus\mathrm{span}\left\{ B_1,\ldots B_{m_0-1-c}\right\}.
$$
Let $X$ extend $\CB$ to a basis for $\CV$. This means that
$$
\CV^{(1)}=\mathrm{span}\big\{\xi_1,\ldots,\xi_c\big\}\oplus\mathrm{span}\big\{ B_1,\ldots B_{m_0-1-c}\big\}\oplus
\mathrm{span}\big\{[X, B_1],\ldots,[X, B_{m_0-1-c}]\big\}
$$
Since $\text{rank}\,\CV^{(1)}=m_0+r$, the subdistribution $\mathrm{span}\left\{[X, B_1],\ldots,[X, B_{m_0-1-c}]\right\}$ has rank $r$ and hence $m_0-1-c=r$, so that $c=m_0-1-r=2m_0-m_1-1$. 
\end{proof}
Proposition \ref{bryant1} also has a converse, Proposition \ref{bryant2}. For this we need the following
notion
\begin{defn}[Engel rank - {see} \cite{BC3G}, p45]
Let $I$ be a Pfaffian system such that $(d\o)^{\r+1}\equiv 0\mod I$ for all $\o\in I$. Then $\r$ is said to be the {\it Engel rank} of $I$. 
\end{defn}
Proposition 4.1 in \cite{BC3G} is useful for computing Engel rank. If $I$ is spanned by $\o^1,\ldots, \o^s$ and the Engel rank of $I$ is $\r$ then the $(\r+1)$-fold product of the general element in 
$dI\mod I$ is zero. That is, $(t_1d\o^1+t_2d\o^2+\cdots+t_sd\o^s)^{\r+1}\equiv 0\mod I$ for arbitrary $t_i$.

\vskip 2 pt
Note that it is not possible to compute Engle rank by doing so on the individual elements of a basis. 
The following statements are equivalent:

\begin{prop}[Bryant, \cite{BryantThesis}]\label{bryant2}
Let $\CV\subset TM$ be a smooth distribution over manifold $M$ of rank $m_0$ and $\CV^{(1)}$ is its derived bundle of rank $m_1\geq m_0+1$. The following are equivalent:

\begin{enumerate}
\item The Cauchy bundle $\ch\CV$ has rank $2m_0-m_1-1$ and $\mathrm{ann}\,\CV$ has Engel rank equal to 1.
\item The distribution $\CV$ contains a Bryant subdistribution.
\end{enumerate}
\end{prop}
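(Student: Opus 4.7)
The plan is to establish each direction of the equivalence in turn, the Cauchy rank portion of $(2)\Rightarrow(1)$ being immediate from Proposition~\ref{bryant1}. For the Engel rank portion of $(2)\Rightarrow(1)$, let $\CB\subset\CV$ be a Bryant subdistribution. For any $\o\in I:=\ann\CV$ and any $B_1,B_2\in\CB$,
\begin{equation*}
d\o(B_1,B_2)=-\o([B_1,B_2])=0,
\end{equation*}
since $[\CB,\CB]\subset\CV$. Thus the restriction $d\o|_{\CV}\in\Lambda^2\CV^*$ vanishes on $\Lambda^2\CB$ and factors as $\xi\wedge\alpha$, where $\xi\in\CV^*$ annihilates $\CB$. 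Squaring yields $(d\o)^2\equiv 0\mod I$, which gives Engel rank at most one, while $m_1\geq m_0+1$ rules out Engel rank zero.

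For $(1)\Rightarrow(2)$, set $r:=m_1-m_0$ and consider the quotient $\wh\CV:=\CV/\ch\CV$, of rank $r+1$ under the Cauchy rank hypothesis. The induced skew structure tensor $\wh\delta:\Lambda^2\wh\CV\to\CV^{(1)}/\CV$ has the property that no nonzero $v\in\wh\CV$ satisfies $\wh\delta(v,w)=0$ for every $w\in\wh\CV$, since such a $v$ would lift to a Cauchy characteristic of $\CV$. Dually, the reductions $d\o|_{\wh\CV}$ as $\o$ ranges over $I$ span an $r$-dimensional subspace $S\subset\Lambda^2\wh\CV^*$, and the Engel rank one hypothesis says precisely that every element of $S$ is decomposable.

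At this stage I invoke the classical dichotomy for linear subspaces of decomposable 2-forms: $S$ is either of \emph{type (a)}, admitting a common 1-form factor $\xi\in\wh\CV^*$ that divides every $\omega\in S$, or of \emph{type (b)}, contained in $\Lambda^2 V_3$ for some 3-dimensional $V_3\subset\wh\CV^*$. In case (b), the annihilator $V_3^\perp\subset\wh\CV$ lies in the common kernel $\bigcap_{\omega\in S}\ker\omega$, which must be zero by the previous paragraph; this forces $\dim V_3=r+1$ and so contradicts $\dim V_3\leq 3$ whenever $r\geq 3$. The small cases $r=1,2$ can be handled directly: when $r=1$ the 1-dimensional $S$ is trivially of type (a), while for $r=2$ one has $\dim\wh\CV^*=3$, and a Hodge duality argument shows that every 2-dimensional subspace of $\Lambda^2\wh\CV^*$ has the form $v\wedge\wh\CV^*$ for some $v\in\wh\CV^*$.

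Given the common factor $\xi$, its kernel $\wh\CB:=\ker\xi\subset\wh\CV$ of dimension $r$ pulls back along $\CV\to\wh\CV$ to a sub-bundle $\CB\subset\CV$ of rank $m_0-1$; the vanishing of each $\omega\in S$ on $\Lambda^2\wh\CB$ translates back to $[\CB,\CB]\subset\CV$, so $\CB$ is a Bryant subdistribution. The main obstacle is the classification of subspaces of decomposable 2-forms invoked above, together with the low-rank cases $r\leq 2$, where the Engel rank hypothesis becomes vacuous and a common factor must be produced from purely dimensional considerations.
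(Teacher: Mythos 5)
The paper does not actually supply a proof of Proposition~\ref{bryant2}: it is stated with a citation to Bryant's thesis, and the subsequent results (Theorem~\ref{BryantIffWeber}, Corollary~\ref{BryantWeberEquivalence}) \emph{use} it rather than prove it. So your argument cannot be compared line-by-line with anything in the text; judged on its own, it is correct and is essentially the standard proof one expects behind the citation. The direction $(2)\Rightarrow(1)$ is right: $d\o|_{\CV}$ vanishes on $\L^2\CB$ for a corank-one $\CB$, hence is decomposable with a factor annihilating $\CB$, giving $(d\o)^2\equiv 0\bmod I$, and $m_1\geq m_0+1$ excludes Engel rank zero; the Cauchy rank comes from Proposition~\ref{bryant1}. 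For $(1)\Rightarrow(2)$, your dimension count $\dim S=m_1-m_0=r$ (kernel of $\o\mapsto d\o\bmod I$ is $I^{(1)}$), the identification of the common kernel of $S$ with the image of $\ch\CV$, and the appeal to the classical dichotomy for linear systems of decomposable $2$-forms (common linear factor versus containment in $\L^2$ of a $3$-dimensional space) all check out; the type (b) branch is correctly killed for $r\geq 3$ by the trivial common kernel, and your treatment of $r=1,2$ via the rank of $\wh{\CV}=\CV/\ch\CV$ is sound. Two small points worth a sentence each if you write this up: (i) you should note that the common factor $\xi$ can be chosen as a \emph{smooth} local section (for $r\geq 2$ it is pointwise unique up to scale once two elements of $S$ have distinct factor planes, and for $r=1$ any smooth corank-one complement of $\ch\CV$ works), so that $\CB$ is genuinely a sub-bundle; (ii) the non-uniqueness of $\CB$ when $r=1$ is consistent with the paper's Theorem~\ref{BryantIffWeber}, which assumes $q\geq 2$ precisely to get uniqueness. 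Your route through the linear algebra of decomposable $2$-forms is independent of, and complementary to, the paper's singular-variety/Weber-structure characterization in Theorem~\ref{BryantIffWeber}.
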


\begin{prop}[Bryant]\label{bryant3}
Let $\CV\subset TM$ be a smooth distribution over manifold $M$ of rank $m_0$ and $\CV^{(1)}$ its derived distribution of rank $m_1\geq m_0+1$. Suppose $\CB\subset \CV$ is a Bryant subdistribution. If $r:=m_1-m_0\geq 3$ then
$\CB$ is integrable.
\end{prop}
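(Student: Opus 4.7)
I plan to work dually with the annihilator. Choose local $1$-forms $\omega^1,\ldots,\omega^s$ forming a basis of $\ann\CV^{(1)}$, so that $d\omega^i\equiv 0\pmod{\ann\CV}$, and extend this to a basis $\omega^1,\ldots,\omega^s,\pi^1,\ldots,\pi^r$ of $\ann\CV$. Since $\CB$ has corank $1$ in $\CV$, there is a $1$-form $\alpha$ with $\ann\CB=\ann\CV\oplus\{\alpha\}$, and by Frobenius it suffices to prove $d\alpha\equiv 0\pmod{\ann\CB}$.

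The first step is to extract from the Bryant hypothesis the normal form
\[
d\pi^j \equiv \alpha\wedge\beta^j \pmod{\ann\CV},\qquad j=1,\ldots,r,
\]
for some $1$-forms $\beta^j$. Indeed, for any $\omega\in\ann\CV$ and $X,Y\in\CV$ one has $d\omega(X,Y)=-\omega([X,Y])$, so $[\CB,\CB]\subset\CV$ makes $d\omega|_{\Lambda^2\CB}$ vanish, forcing $d\omega$ modulo $\ann\CV$ to be decomposable with $\alpha$ as a factor. Since $[\pi^1],\ldots,[\pi^r]$ are independent in $\ann\CV/\ann\CV^{(1)}$, the $2$-forms $\alpha\wedge\beta^j$ are linearly independent modulo $\ann\CV$, which forces $\beta^1,\ldots,\beta^r$ to be linearly independent modulo $\ann\CB$.

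Next I would apply $d^2\pi^j=0$. Expanding $d\pi^j = \alpha\wedge\beta^j + \sum_i A^j_i\wedge\omega^i + \sum_k C^j_k\wedge\pi^k$ and differentiating, every term in $d^2\pi^j$ reduces to zero modulo $\ann\CB$ except $d\alpha\wedge\beta^j$: terms retaining a factor from $\ann\CV$ sit in $\ann\CB$, the derivatives $d\omega^i$ already lie in $\ann\CV$ by choice of basis, and the substitution $d\pi^k\equiv\alpha\wedge\beta^k$ carries a killing factor of $\alpha$. This yields the key relation
\[
d\alpha\wedge\beta^j \equiv 0\pmod{\ann\CB},\qquad j=1,\ldots,r.
\]

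The remaining step is purely algebraic and is where the hypothesis $r\geq 3$ enters. Let $V := T^*M/\ann\CB \cong \CB^*$, which has dimension $m_0-1\geq r$ by Proposition \ref{bryant1}, and let $\bar\gamma\in\Lambda^2 V$ be the image of $d\alpha$. The relations say $\bar\gamma\wedge\bar\beta^j = 0$ for $r\geq 3$ linearly independent $\bar\beta^j\in V$. From $\bar\gamma\wedge\bar\beta^1=0$ one obtains $\bar\gamma=\bar\beta^1\wedge\bar\eta$ for some $\bar\eta$; feeding this into $\bar\gamma\wedge\bar\beta^2=0$ forces $\bar\eta\in\mathrm{span}(\bar\beta^1,\bar\beta^2)$, hence $\bar\gamma = c\,\bar\beta^1\wedge\bar\beta^2$; finally $\bar\gamma\wedge\bar\beta^3=0$ forces $c=0$, so $\bar\gamma=0$ and $\CB$ is integrable by Frobenius. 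This algebraic pinch-off is the main structural obstacle: with only $r=2$ one can only conclude that $\bar\gamma$ is a possibly nonzero multiple of $\bar\beta^1\wedge\bar\beta^2$, and a third independent relation is precisely what is needed to kill that residual freedom.
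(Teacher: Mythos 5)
Your proof is correct. The paper itself does not prove this proposition --- it defers to \cite{RespondekContact} --- and your argument is precisely the classical one found there and in Bryant's thesis: extract the normal form $d\pi^j\equiv\alpha\wedge\beta^j \pmod{\ann\CV}$ from the condition $[\CB,\CB]\subset\CV$, apply $d^2\pi^j=0$ to obtain $d\alpha\wedge\beta^j\equiv 0\pmod{\ann\CB}$, and use the wedge-algebra pinch-off with three independent $\beta^j$ to force $d\alpha\equiv 0\pmod{\ann\CB}$, whence Frobenius applies since $d\omega^i$ and $d\pi^j$ already vanish modulo $\ann\CB$. The only points worth flagging are routine: the pointwise linear independence of the $\beta^j$ modulo $\ann\CB$ rests on the paper's standing total-regularity assumption (so that $\theta\mapsto d\theta \bmod \ann\CV$ is a constant-rank bundle map with kernel $\ann\CV^{(1)}$, making the images of $\pi^1,\ldots,\pi^r$ independent at every point), and your closing observation that $r\geq 3$ is exactly what kills the residual multiple of $\beta^1\wedge\beta^2$ correctly identifies why the case $r=2$ is genuinely different.
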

\begin{proof} See \cite{RespondekContact}.
\end{proof}

The next result gives another characterization of Bryant sub-bundles that we will use. It asserts that $\CV\subset TM$ contains a Bryant sub-bundle if and only if it determines a Weber structure. 
\begin{thm}[Characterization of Bryant sub-bundles]\label{BryantIffWeber}
Let $q\geq 2$ be integer and $(\CV, \B P\wh{\CB})$ be a Weber structure of rank $q$ on $M$; in particular 
$\mathrm{rank}\CV^{(1)}-\mathrm{rank}\CV=q$. Then a Bryant sub-bundle exists, is unique 
and given by the resolvent bundle of the Weber structure. Conversely, a Bryant sub-bundle of $\CV$
determines a unique Weber structure.
\end{thm}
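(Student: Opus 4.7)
The strategy is to prove the two directions separately; uniqueness in each will follow from the fact that the singular variety $\widehat{\Sigma}$ is an intrinsic invariant of $\CV$, so the sub-bundle $\widehat{\CB}$ it distinguishes is canonically determined.

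I would first dispatch the converse implication (Bryant $\Rightarrow$ Weber). Given a Bryant sub-bundle $\CB\subset\CV$, Proposition \ref{bryant1} yields $\ch\CV\subset\CB$ with rank $2m_0-m_1-1=c$. Setting $\widehat{\CB}:=\CB/\ch\CV$ gives a rank-$q$ sub-bundle of $\widehat{\CV}$. The Bryant hypothesis $[\CB,\CB]\subset\CV$ descends to $\widehat{\sigma}(\widehat{Y})\big|_{\widehat{\CB}}\equiv 0$ for every $\widehat{Y}\in\widehat{\CB}$, so $\mathrm{rank}\,\widehat{\sigma}(\widehat{Y})\leq\dim(\widehat{\CV}/\widehat{\CB})=1$, and since $\mathrm{Char}\,\widehat{\CV}=0$ this rank is exactly $1$. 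For $\widehat{X}\not\in\widehat{\CB}$, augmenting $\widehat{\CB}$ by $\widehat{X}$ spans $\widehat{\CV}$, so $\mathrm{Im}\,\widehat{\sigma}(\widehat{X})=[\widehat{X},\widehat{\CB}]\bmod\widehat{\CV}$ attains the generic rank $q$. Hence $\widehat{\Sigma}=\mathbb{P}\widehat{\CB}$ with every point of degree $1$, i.e., a Weber structure of rank $q$.

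For the forward direction (Weber $\Rightarrow$ Bryant), $R(\CV)=\pi^{-1}(\widehat{\CB})$ has corank $1$ in $\CV$ by definition, so the Bryant condition reduces to $\widehat{\delta}\big|_{\widehat{\CB}\times\widehat{\CB}}\equiv 0$. The degree-$1$ hypothesis factorizes $\widehat{\sigma}(\widehat{Y})=\phi_{\widehat{Y}}\otimes\ell_{\widehat{Y}}$ for each $\widehat{Y}\in\widehat{\CB}$, with $\phi_{\widehat{Y}}\in\widehat{\CV}^*$ and $\ell_{\widehat{Y}}\in\widehat{TM}/\widehat{\CV}$ nonzero, and antisymmetry forces $\phi_{\widehat{Y}}(\widehat{Y})=0$. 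The key intermediate observation is that for any $\widehat{X}_0\not\in\widehat{\CB}$, $\widehat{\sigma}(\widehat{X}_0)$ has rank $q$ with kernel $\mathrm{span}(\widehat{X}_0)$ (by antisymmetry), so the restriction $\widehat{\sigma}(\widehat{X}_0)\big|_{\widehat{\CB}}$ is an isomorphism onto $\widehat{TM}/\widehat{\CV}$; chaining this with $\widehat{\sigma}(\widehat{X}_0)(\widehat{Y})=-\widehat{\sigma}(\widehat{Y})(\widehat{X}_0)\in L_{\widehat{Y}}:=\mathrm{span}(\ell_{\widehat{Y}})$ shows every vector in $\widehat{TM}/\widehat{\CV}$ lies on some $L_{\widehat{Y}}$. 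The resulting surjection $\overline{L}:\mathbb{P}\widehat{\CB}\twoheadrightarrow\mathbb{P}(\widehat{TM}/\widehat{\CV})$ of projective spaces of common dimension $q-1$ guarantees that a generic pair satisfies $L_{\widehat{Y}_1}\neq L_{\widehat{Y}_2}$. For such a pair the line $\widehat{Y}_1+t\widehat{Y}_2$ stays in $\widehat{\CB}$, so $\widehat{\sigma}(\widehat{Y}_1+t\widehat{Y}_2)=\phi_{\widehat{Y}_1}\otimes\ell_{\widehat{Y}_1}+t\phi_{\widehat{Y}_2}\otimes\ell_{\widehat{Y}_2}$ has rank at most $1$ for all $t$; because $\ell_{\widehat{Y}_1},\ell_{\widehat{Y}_2}$ are linearly independent this is possible only when $\phi_{\widehat{Y}_1}$ and $\phi_{\widehat{Y}_2}$ are proportional. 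Combined with $\phi_{\widehat{Y}_1}(\widehat{Y}_1)=0$ this yields $\phi_{\widehat{Y}_2}(\widehat{Y}_1)=0$, i.e., $\widehat{\delta}(\widehat{Y}_2,\widehat{Y}_1)=0$; continuity of $\widehat{\delta}$ extends the vanishing to all of $\widehat{\CB}\times\widehat{\CB}$.

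The hard part is exactly this transit from the pointwise, single-vector degree-$1$ condition to a vanishing statement on pairs; antisymmetry alone is insufficient, and one must interlock the linear structure of $\widehat{\CB}$ (so that $\widehat{Y}_1+t\widehat{Y}_2$ remains singular) with the surjectivity of $\overline{L}$ (itself a delicate consequence of antisymmetry and the generic-rank condition) and the rank-$1$ sum analysis. Once both directions are established, uniqueness is automatic: $\widehat{\Sigma}$ is defined purely from the structure tensor $\widehat{\delta}$ of $\CV$, so $\widehat{\CB}$, and hence the Bryant sub-bundle $R(\CV)=\pi^{-1}(\widehat{\CB})$ (respectively, the Weber structure), are unique.
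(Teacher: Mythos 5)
Your converse direction (Bryant $\Rightarrow$ Weber) and your uniqueness argument via the intrinsic nature of the singular variety are correct and essentially coincide with the paper's proof. The problem lies in the forward direction, precisely at the step you flag as the ``key intermediate observation'': the claim that for $\wh{X}_0\notin\wh{\CB}$ the polar map $\wh{\sigma}(\wh{X}_0)$ has rank $q$, so that its restriction to $\wh{\CB}$ is an isomorphism onto $\CV^{(1)}/\CV$. Antisymmetry only gives $\wh{X}_0\in\ker\wh{\sigma}(\wh{X}_0)$, i.e.\ rank at most $q$; and the Weber axioms only tell you that points off $\B P\wh{\CB}$ have the \emph{generic} degree, which is some integer $\geq 2$ --- nothing in the definition says that this generic degree equals $q$. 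Everything downstream (surjectivity of $\overline{L}$, the existence of generic pairs with $\ell_{\wh{Y}_1}\wedge\ell_{\wh{Y}_2}\neq 0$, hence the pencil argument) rests on this unproved claim, so as written the forward direction does not close. It is also a warning sign that your forward argument never visibly invokes the numerical hypothesis $\mathrm{rank}\,\CV^{(1)}-\mathrm{rank}\,\CV=q$, which must do real work.

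The claim is true, but proving it requires exactly that hypothesis: write $\wh{\CV}=\mathrm{span}\{\wh{X},\wh{B}_1,\ldots,\wh{B}_q\}$ and $\mathrm{Im}\,\wh{\sigma}(\wh{B}_i)=\mathrm{span}\{Z_i\}$ (degree $1$); then by antisymmetry every value of $\wh{\delta}$ lies in $\mathrm{span}\{Z_1,\ldots,Z_q\}$, so this span contains $\CV^{(1)}/\CV$, which has rank $q$, forcing $Z_1\wedge\cdots\wedge Z_q\neq 0$. But once the $Z_i$ are known to be independent you are finished in one line: $\wh{\delta}(\wh{B}_i,\wh{B}_j)\in\mathrm{span}\{Z_i\}\cap\mathrm{span}\{Z_j\}=0$ for $i\neq j$, which is the paper's argument. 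Only \emph{after} this is established does one obtain $\mathrm{rank}\,\wh{\sigma}(\wh{X}_0)=q$ off $\wh{\CB}$; so your logical order is reversed, and the lemma needed to fill your gap simultaneously renders the surjection and pencil machinery unnecessary.
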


\begin{proof}
Suppose $(\CV, \B P\wh{\CB})$ is the Weber structure. Let $\{[B_1], [B_2],\ldots,[B_q]\}$ be a basis for $\wh{\CB}$ and $\{\xi_1,\ldots,\xi_c\}$ a basis for
$\ch\CV$. Let $X$ extend the $B_i$ and $\xi_l$ to a basis for $\CV$. Since each line containing $B_i$ has degree 1, there are vector fields $Z_i\in {TM}/{\CV}$ such that 
$Z_1\wedge Z_2\wedge\cdots\wedge Z_q\neq 0$ with
$$
[B_i, {\CV}]\in\mathrm{span}\{ Z_i\}\mod {\CV}, \ \ \ 1\leq i\leq q.
$$
Note that at least one element of the collection $[B_i, {\CV}]$ must be nonzero modulo $\CV$; otherwise $B_i$ would have degree zero and then $B_i\in\ch\CV$, which we exclude. 

For each fixed $i\in\{1, 2, \ldots, q\}$ we have
$$
[B_i, B_j]\in\mathrm{span}\{ Z_i\}\!\!\!\!\!\!\!\mod {\CV}, \ \ \ \ \ \ \ [B_i, X]\in\mathrm{span}\{ Z_i\}
\!\!\!\!\!\!\!\mod {\CV},\ \ \forall j\neq i.
$$
Assuming $[B_i, B_j]\nequiv 0\!\!\!\mod {\CV}$ we deduce that
$Z_1\wedge Z_2\wedge\cdots\wedge Z_q=0$. This contradiction implies that
$[B_i, B_j]\equiv 0\!\!\!\mod {\CV}$ (this implies the sub-bundle whose space of sections is generated by 
$\{[B_1, X], \ldots, [B_q, X]\}$
has rank $q$). 

Let us now observe that $\CB:=\mathrm{span}\{B_1,\ldots, B_q, \xi_1,\ldots, \xi_c\}$ has corank 1 in $\CV$ and satisfies
$[\CB, \CB]\equiv 0\!\!\!\mod \CV$; it is therefore a Bryant sub-bundle. But this is precisely the definition of the resolvent of the Weber structure we started with.

Conversely, suppose $\CB\subset\CV$ is a Bryant sub-bundle. By Proposition \ref{bryant1},
$\CB$ must contain $\ch\CV$ and therefore $\CB$ is spanned by vector fields $B_i, \xi_l$ so that
$\CB=\mathrm{span}\{B_1,\ldots,B_q, \xi_1,\ldots,\xi_c\}$, where $\xi_l\in\ch\CV$. If $X$ extends $\CB$ to a basis for $\CV$ then we have
$[\CB, \CB]\equiv 0\!\!\!\mod \CV$ and $\text{rank}\CV^{(1)}-\text{rank}\CV=q$ and therefore we must have 
$[B_i, X]\in\mathrm{span}\{Z_i\}$ such that $Z_i\in TM/\CV$ where $Z_1\wedge Z_2\wedge\cdots\wedge Z_q\neq 0$. But this means each line in $\mathrm{span}\{B_1,\ldots,B_q\}$ has degree 1 and defines a Weber structure with resolvent equal to $\CB$. 
\end{proof}

\begin{cor}\label{BryantWeberEquivalence}
Let $c\geq 0, q\geq 2$ be integers. Let $\CV\subset TM$ be a sub-bundle over manifold $M$ of rank $m_0=c+q+1$ in which $c=\mathrm{rank}\,\ch\CV$,
and $m_1=\mathrm{rank}\,\CV^{(1)}=c+2q+1$. Then the following are equivalent
\begin{enumerate}
\item $\mathrm{ann}\CV$ has Engel rank 1
\item $\CV$ has a Weber structure of rank $q$
\item $\CV$ has a Bryant sub-bundle
\end{enumerate}
\end{cor}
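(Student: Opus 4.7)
The plan is to simply chain together the two immediately preceding results, namely Proposition \ref{bryant2} and Theorem \ref{BryantIffWeber}, once one checks that the numerical hypotheses of the corollary line up with theirs. Concretely, I would first verify the purely arithmetic statement that $2m_0 - m_1 - 1 = 2(c+q+1) - (c+2q+1) - 1 = c$, and that $m_1 - m_0 = q$. These two identities are exactly what is needed to invoke Bryant's and Weber's results without any additional hypothesis on $\CV$.

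With these identities in hand, I would argue as follows. For the equivalence of (1) and (3): since $\mathrm{rank}\,\ch\CV = c = 2m_0 - m_1 - 1$ holds by assumption, Proposition \ref{bryant2} applies directly and gives that $\mathrm{ann}\,\CV$ has Engel rank $1$ if and only if $\CV$ admits a Bryant sub-bundle. For the equivalence of (2) and (3): since $q \geq 2$ and $\mathrm{rank}\,\CV^{(1)} - \mathrm{rank}\,\CV = q$, the hypotheses of Theorem \ref{BryantIffWeber} are met, and that theorem gives precisely that the existence of a Weber structure of rank $q$ is equivalent to the existence of a Bryant sub-bundle (the Bryant sub-bundle being the resolvent of the Weber structure and, conversely, every Bryant sub-bundle determining a unique such Weber structure). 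Combining these two equivalences chains (1) $\Leftrightarrow$ (3) $\Leftrightarrow$ (2), completing the proof.

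There is essentially no hard step here — the content is entirely in the already-proved Propositions and Theorem, and the corollary's role is to package them into a single trichotomy. The only point that requires a sentence of care is confirming that the rank numerology of the corollary ($m_0 = c + q + 1$ and $m_1 = c + 2q + 1$) forces both $\mathrm{rank}\,\ch\CV = 2m_0 - m_1 - 1$ and $\mathrm{rank}\,\CV^{(1)} - \mathrm{rank}\,\CV = q$, so that no Weber-rank or Cauchy-rank hypothesis needs to be added beyond what is already assumed. The condition $q \geq 2$ is needed in order that ``Weber structure'' be well defined and so that Proposition \ref{bryant3} (which is implicit in the Weber theory for $q \geq 3$, while the $q = 2$ case is handled directly in Theorem \ref{BryantIffWeber}) does not interfere with the argument.
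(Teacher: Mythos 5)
Your proposal is correct and follows essentially the same route as the paper: the arithmetic identities $2m_0-m_1-1=c$ and $m_1-m_0=q$ are verified, and then Proposition \ref{bryant2} and Theorem \ref{BryantIffWeber} are invoked to link the three conditions (the paper arranges the implications as a cycle $(1)\Rightarrow(2)\Rightarrow(3)\Rightarrow(1)$ rather than your two biconditionals through $(3)$, but this is an immaterial difference). The aside about Proposition \ref{bryant3} is unnecessary but harmless.
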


\begin{proof}
$(1) \Rightarrow (2)$: From Proposition \ref{bryant2} we have that $\CV$ has a Bryant sub-bundle since
$\mathrm{rank}\,\ch\CV=2m_0-m_1-1$. Since $m_1-m_0=q$ then $\CV$ has a Weber structure of rank $q$ by Theorem \ref{BryantIffWeber}. 
$(2) \Rightarrow (3)$: This follows from Theorem \ref{BryantIffWeber}.
$(3) \Rightarrow (1)$: This follows from Proposition \ref{bryant2}.
\end{proof}

This shows that if $\CV\subset TM$ contains a Bryant sub-bundle then it has the general form
\begin{equation}\label{BundleWithWeberStructure}
\CV=\mathrm{span}\{T, \xi_1,\ldots,\xi_c, B_1,\ldots, B_q\}=\mathrm{span}\{T\}\oplus\CB,
\end{equation}
where the $\xi_i$ form a basis for $\ch\CV$, the $B_l$ form a basis for the Weber structure and vector field $T$ extends the Bryant sub-bundle $\CB$ to a basis for $\CV$. 

We now present two examples which illustrate the forgoing discussion of Weber structures and Bryant sub-bundles as well as the generalized Goursat normal form, Theorem \ref{GGNF}, that geometrically characterizes Brunovsky normal forms up to local diffeomorphisms.

\begin{exmp}[cf. (\cite{Vassiliou2006a}, Example 2)]\label{example0}
The control system is
$$
\CV=\mathrm{span}\left\{\P t+(x^2+u^2x^3)\P {x^1}+(x^3+u^2x^1)\P {x^2}+u^1\P {x^3}+u^2\P {x^4}, \P {u^1}, 
\ \P {u^2}\right\},
$$
assumed to be restricted to $M=\{x\in\B R^7~|~x^1x^2\neq0\}$. Its derived length is 2 and the refined derived type is 
$$
\mathfrak{d}_r(\CV)=[[3,0],[5,2,2],[7,7]].
$$
The signature of $\CV$ is $\k=\langle 0, 2\rangle=\langle \rho_1, \rho_2\rangle$ and hence $\rho_2=2>1$. We easily check that $\ch\CV^{(1)}=\mathrm{span}\{\P {u^1},\, \P {u^2}\}$ and hence $t$ is a first integral of $\ch\CV^{(1)}$. However, it can be checked that $\CV$ is not \sfl. The reason is that $\rho_2>1$ and hence the resolvent bundle is to be checked. While the resolvent bundle is integrable it does not have $t$ as one of its invariants. To see this we compute the resolvent bundle $R\big(\CV^{(1)}\big)\subset\CV^{(1)}$ (which agrees with the Bryant sub- bundle $\CB^1$ by Corollary \ref{BryantWeberEquivalence}). We find that
$$
\bar{\CV}^{(1)}:=\CV^{(1)}/\ch\CV^{(1)}=\mathrm{span}\left\{\P t+x^2\P {x^1}+x^3\P {x^2}, \ \P {x^3}, \ 
x^3\P {x_1}+x^1\P {x^2}+\P {x^4}\right\}.
$$
Taking a basis $\{-\P {x^2},\, -x^1\P {x^1}+x^2\P {x^2}\}$ for $\text{Im}\,\wh{\delta}$, where 
$\wh{\delta}:\Lambda^2\bar{\CV}^{(1)}\to TM/\bar{\CV}^{(1)}$ is the structure tensor of $\bar{\CV}^{(1)}$, we obtain the $2\times 3$ polar matrix
$$
P(a):=\left(\begin{matrix}-a_2 & (x^1a_1+x^2a_3)/x^1 & -x^2a_2/x^1\cr 
       -a_3 & a_3/x^1 & a_1-a_2/x^1\end{matrix}\right),
$$
where $a=[a_iY_i]$ is a line field in $\bar{\CV}^{(1)}$ and $Y_1, Y_2, Y_3$ denotes the three basis vectors of 
$\bar{\CV}^{(1)}$. The determinantal variety is easily computed to be
$$
\left\{a\in\B P\bar{\CV}^{(1)}~|~\left\{a_1=(a_2-x^2a_3)/x^1\right\}\cup \left\{a_1=a_2=a_3=0\right\}\right\}.
$$
But for the resolvent bundle we only need the nontrivial component where $P(a)$ has rank 1. We have that $P(a)$ has rank 1 if and only if the $a_j$ are constrained by the equation $a_1=(a_2-x^2a_3)/x^1$,
$a_2^2+a_3^2\neq 0$. This means the singular bundle in $\bar{\CV}^{(1)}$ is 
$$
\Sigma^1:=\mathrm{span}\{Y_1+x^1Y_2,\ x^1Y_3-x^2Y_1\}
$$ 
and therefore the resolvent bundle in $\CV^{(1)}$ (which is also the Bryant sub-bundle) is given by
$$
\CB^1=R\big(\CV^{(1)}\big)=\mathrm{span}\{\P {u^1},\, \P {u^2},\, Y_1+x^1Y_2,\ x^1Y_3-x^2Y_1\}.
$$
This bundle satisfies $[\CB^1, \CB^1]\subseteq\CV^{(1)}$, as it must, and furthermore we find that it is integrable. This verifies that
$\CV$ is locally diffeomorphic to the contact bundle $\CB_{\langle 0, 2\rangle}$ on $J^2(\B R, \B R^2)$ by the generalized Goursat normal, Theorem \ref{GGNF}. But since clearly $t$ is not an invariant of 
$R\big(\CV^{(1)}\big)$, 
$\CV$ is not \sfl\ by Theorem \ref{Goursat SFL}. It is, however, orbital feedback linearizable.
\end{exmp}

\begin{rem}\label{independence condition remark}
If $\CV$ is a Goursat bundle on manifold $M$ then the choice of parameter along the integral curves of $\CV$ is any regular first integral $\t$ of the highest order bundle $\Pi^k$, if $\rho_k=1$ or of the resolvent bundle
$R\big(\CV^{(k-1)}\big)$ if $\rho_k>1$. In either case we shall refer to this choice of first integral as an 
{\it independence condition} for the Goursat bundle $\CV$. 
\end{rem}

The next example illustrates the integrability of the intersection bundles $\ch\CV^{(j)}_{j-1}$  (Definition \ref{Goursat}) as a necessary condition in the characterization of Brunovsky normal forms as proven in Theorem \ref{GGNF}, and its relation to the Engel rank.

\begin{exmp}\label{example1}
The control system
$$
\begin{aligned}
\text{pr}\,\CV:=\mathrm{span}\Big\{\P t+(x^2+u^2x^3)&\P {x^1}+(x^3+u^2x^1)\P {x^2}+(x^4+u^3x^5)\P {x^3}+(u^3x^4+x^5)\P {x^4}\cr
&+u^1\P {x^5}+u^2\P {x^6}+v^1\P {u^2}+v^2\P {v^1}+v^3\P {v^2}, \P {v^3}, \P {u^3},\P {u^1}
\Big\}
\end{aligned}
$$
is bracket generating but not \sfl\ as may easily be checked. It is not even orbital feedback linearizable. In fact, it is {\it not} a Goursat bundle since the intersection bundle $\ch\CV^{(2)}_1$ is not integrable
(Definition \ref{Goursat}) even though its {\it refined derived type} given by
$$
\mathfrak{d}_r(\text{pr}\CV)=[[4, 0], [7, 3, 3], [10, 6, 7], [12, 9, 10], [13, 13]]
$$
is that of a Goursat bundle, since it satisfies equations \eqref{linearTypeConstraints}.  In particular, 
$\chi^2_1=m_1-1=7-1=6$, which is maximal, and additionally $t$ is a first integral of 
$\ch\,\left(\text{pr}\CV^{(3)}\right)$ (Theorem \ref{relStatFeedbackLin}). Nevertheless, by Theorem \ref{GGNF}, $\text{pr}\CV$ cannot be identified with a Brunovsky normal form by {\it any} local diffeomorphism. Note that here the Engel rank of 
$\text{ann}\, \text{pr}\CV^{(1)}$ is 2 rather than 1, as may be checked; compare with Proposition \ref{bryant2}.
 
\end{exmp}

\section{Group Quotients and Transversality}\label{transversalitySection}

In Example \ref{ExmpResolventRelativeGoursat} we studied the control system $\CV$ given by \eqref{CharletExample} relative to its control symmetry group $G$ generated by the infinitesimal control symmetry $\P {x^4}$. In particular, we saw that the refined derived type of the augmented system $\wh{\CV}=\CV\oplus\mathrm{span}\{\P {x^4}\}$ has the form
$$
\mathfrak{d}_r\big(\wh{\CV}\big)=[[4, 1], [6, 3, 4], [7, 7]],
$$
while the refined derived type of $\CV$ itself is given by
$$
\mathfrak{d}_r(\CV)=[[3, 0], [5, 2, 2], [7, 7]].
$$
Furthermore, $\CV$ is not \sfl, while $\wh{\CV}$, and hence $\CV/G$, are \sfl. It is obviously important to understand this transition from a non-\sfl\ control system $\CV$, to a \sfl\ system $\CV/G$, by performing a group quotient. To do this we need to firstly understand precisely how the two refined derived types 
$\mathfrak{d}_r\big(\wh{\CV}\,\big)$ and $\mathfrak{d}_r(\CV)$, are related in terms of fundamental invariants of the derived flags of each sub-bundle $\wh{\CV}$ and $\CV$, respectively. The present section is devoted to solving this problem.
The issue is intimately related to generalizing the notion of strong transversality from Definition 
\ref{controlAdmissibleDefn}. 

\begin{defn}\label{transversalityDefn}
Let $\CV$ be a bracket generating distribution over a manifold $M$ of derived length $k>1$ that is invariant under the action of an $r$-dimensional Lie group $G$ with sub-bundle $\bsy{\G}$ of infinitesimal generators.
We say that the action is $\ell$-{\it transverse for} $\CV$, when $\ell$ is the (necessarily unique or vacuous\footnote{This is for two reasons: (i) the transverse ranks (see \eqref{symmetryTransverseBundles}) are always non-decreasing with each derived system since the symmetries of $\CV$ are also symmetries of any $\CV^{(\ell)}$ for any $\ell$ a non-negative integer and (ii
) $\CV^{(\ell)}\subseteq \CV^{(h)}$ for any $\ell\leq h$}) positive integer such that 
\begin{equation}\label{ellTransversality}
\CV^{(\ell)}\cap\bsy{\G}=\mathrm{span}\{0\}\ \ \ \text{and}\ \ \ \ \CV^{(\ell+1)}\cap\bsy{\G}\neq\mathrm{span}\{0\}.
\end{equation}
Additionally, define the {\it symmetry transverse bundles} by
\begin{equation}\label{symmetryTransverseBundles}
\bsy{\G}_i=\CV^{(i)}\cap\bsy{\G}
\end{equation}
and the {\it transverse ranks} are $r_i=\text{rank}\,\bsy{\G}_i$.
\end{defn}
Notice that in the light of Definition \ref{transversalityDefn}, the notion of strong transversality of Definition 
\ref{controlAdmissibleDefn} coincides with {\it 1-transversality}. The transversality ranks $r_i$ are zero for all
$i\leq \ell$ for an $\ell$-transverse symmetry group action. It is important to point out that a non-trivial
symmetry transverse bundle $\bsy{\G}_i$ may not necessarily contain a Lie subalgebra of $\bsy{\G}$, and, in fact, it need not even be involutive.

To state our next result, we shall define one further sub-bundle. Recall the structure tensor (Definition \ref{struct-tensor-def}) $\delta_i:\L^2\CV^{(i)}\to TM/\CV^{(i)}$
of $\CV^{(i)}$ and let $\pi_i:TM\to TM/\ch\CV^{(i)}$ be the projection to the leaf space $\wt{TM}$ of the integrable constant rank distribution $\ch\CV^{(i)}$. Let us define $\wt{\bsy{\G}}_i=(\pi_i)_*\bsy{\G}_i$
and $\wt{\CV}^{(i)}=(\pi_i)_*\CV^{(i)}$. The structure tensor $\delta_i$ descends to the structure tensor
$\wt{\delta}_i:\L^2\wt{\CV}^{(i)}\to \wt{TM}/\wt{\CV}^{(i)}$ defined by
$\wt{\delta}_i(\wt{X},\wt{Y})=[\wt{X},\wt{Y}]\mod\wt{\CV}^{(i)}$. 
Then, we have
\begin{defn}\label{bundlesKdefn}
The sub-bundles $\CK_i\subset\wt{\CV}^{(i)}$, $0\leq i\leq k-1$ are given by
\begin{equation}\label{bundlesK}
\CK_i=\left\{\wt{X}\in\wt{\CV}^{(i)}~|~\wt{\delta}_i(\wt{X}, \wt{Y})\in\wt{\G}_{i+1}\mod\wt{\CV}^{(i)},
\ \ \forall\ \wt{Y}\in\wt{\CV}^{(i)}\right\}.
\end{equation}
\end{defn}
The sub-bundles $\CK_i$ will be used to relate the refined derived types of $\CV$ and $\wh{\CV}$ by the following result.

\begin{thm}\label{ell-transverseQuotient-refDerTypeTHM}
Let $\CV$ be a bracket generating distribution over an $n$-dimensional manifold $M$ of derived length $k>1$ that is invariant under the action of an $r$-dimensional Lie group $G$ with sub-bundle $\bsy{\G}$ of infinitesimal generators.
Suppose the action is $\ell$-{\it transverse for} $\CV$ for some $0\leq\ell\leq k-1$. Then
\begin{equation}\label{hat{V}^ibundles}
\wh{\CV}^{(i)}=\CV^{(i)}\oplus\L_i,\ \ \ \forall\, i\geq 0\ \ \text{such that}\ \ \mathrm{rank}\,\wh{\CV}^{(i)}<n,
\end{equation}
where $\L_i\subseteq \bsy{\G}$ is any sub-bundle that is transverse to $\bsy{\G}_i$; that is, $\bsy{\G}=\L_i\oplus\bsy{\G}_i$.

Furthermore, the refined derived type of $\wh{\CV}:=\CV\oplus\bsy{\G}$ satisfies
\begin{equation}\label{hatVrefinedDerivedType}
\begin{aligned}
&\wh{m}_i=m_i+r-r_i,\ \ \ \forall\, i\geq 0\ \ \text{such that}\ \ \wh{m}_i<n,\cr
&\wh{\chi}^i=\chi^i+r-p_i+q_i,\ \ \ \forall\, i\geq 0\ \ \text{such that}\ \ \wh{m}_i<n,\cr
&\wh{\chi}^j_{j-1}=\chi^j_{j-1}+r-p'_i+q_i',\ \ \ \forall\, j\geq 1\ \ \text{such that}\ \ \wh{m}_j<n,
\end{aligned}
\end{equation}
where
\begin{equation}\label{bundleRankLabels}
\begin{aligned}
&p_i=\mathrm{rank}\,(\bsy{\G}\cap\ch\CV^{(i)}), \ \ \ \forall\, i\geq 0\ \ \text{such that}\ \ \wh{m}_i<n,\cr
&p_j'=\mathrm{rank}\,(\bsy{\G}_{j-1}\cap\ch\CV^{(j)}),\ \ \ \forall\, j\geq 1\ \ \text{such that}\ \ \wh{m}_j<n\cr
& q_i=\mathrm{rank}\,\CK_i, \ \ \ \forall\, i\geq 0\ \ \text{such that}\ \ \wh{m}_i<n,\cr
&q_j'=\mathrm{rank}(\CK_j\cap\CV^{(j-1)}),,\ \ \ \forall\, j\geq 1\ \ \text{such that}\ \ \wh{m}_j<n.
\end{aligned}
\end{equation}
where $m_i, \chi^i, \chi^j_{j-1}$ are the type numbers of $\CV$ and $\wh{m}_i, \wh{\chi}^i, \wh{\chi}^j_{j-1}$ are those of
$\wh{\CV}$.
\end{thm}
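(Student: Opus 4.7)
The proof proceeds by first establishing the decomposition $\wh{\CV}^{(i)}=\CV^{(i)}\oplus\L_i$ by induction on $i$. The base case follows from $\wh{\CV}=\CV+\bsy{\G}$ together with $\bsy{\G}=\bsy{\G}_0\oplus\L_0$ and $\bsy{\G}_0\subseteq\CV$. For the inductive step I expand
\[
\wh{\CV}^{(i)}=\wh{\CV}^{(i-1)}+[\wh{\CV}^{(i-1)},\wh{\CV}^{(i-1)}]
\]
using the hypothesis $\wh{\CV}^{(i-1)}=\CV^{(i-1)}+\bsy{\G}$ and invoke three elementary facts to collapse the brackets: $[\CV^{(i-1)},\CV^{(i-1)}]\subseteq\CV^{(i)}$, $[\CV^{(i-1)},\bsy{\G}]\subseteq\CV^{(i-1)}$ (since $\bsy{\G}$ consists of symmetries of every $\CV^{(j)}$), and $[\bsy{\G},\bsy{\G}]\subseteq\bsy{\G}$. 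This gives $\wh{\CV}^{(i)}=\CV^{(i)}+\bsy{\G}$, from which the direct-sum form follows and the rank identity $\wh{m}_i=m_i+r-r_i$ is immediate. A small but crucial lemma to record at this stage is that each $X\in\bsy{\G}_i$ is a symmetry of $\CV^{(i)}$ and therefore satisfies $[X,\CV^{(i)}]\subseteq\CV^{(i)}$; combined with $X\in\CV^{(i)}$, this forces $\bsy{\G}_i\subseteq\ch\CV^{(i)}$, giving the identities $r_i=p_i$ and, by the same reasoning, $p'_j=r_{j-1}$.

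To compute $\wh{\chi}^i$ I introduce the auxiliary bundle
\[
\CM_i=\{Y\in\CV^{(i)}\mid [Y,\CV^{(i)}]\subseteq\CV^{(i)}+\bsy{\G}\}
\]
and show $\ch\wh{\CV}^{(i)}=\bsy{\G}+\CM_i$. On the one hand $\bsy{\G}\subseteq\ch\wh{\CV}^{(i)}$ by the symmetry property together with $[\bsy{\G},\bsy{\G}]\subseteq\bsy{\G}$. On the other hand, decomposing any $X\in\wh{\CV}^{(i)}$ uniquely as $Y+Z$ with $Y\in\CV^{(i)}$ and $Z\in\L_i$, the three brackets $[Y,\bsy{\G}]$, $[Z,\CV^{(i)}]$, $[Z,\bsy{\G}]$ lie automatically in $\wh{\CV}^{(i)}$, so the Cauchy condition reduces to $Y\in\CM_i$. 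Since $\ch\CV^{(i)}\subseteq\CM_i$ and any bracket $[Y,\CV^{(i)}]$ lies in $\CV^{(i+1)}$, the condition $[Y,\CV^{(i)}]\subseteq\CV^{(i)}+\bsy{\G}$ is equivalent to $[Y,\CV^{(i)}]\subseteq\CV^{(i)}+\bsy{\G}_{i+1}$, so $\pi_i(\CM_i)=\CK_i$. This gives $\rank\CM_i=\chi^i+q_i$, and subtracting the overlap $\bsy{\G}\cap\CM_i=\bsy{\G}_i$ yields
\[
\wh{\chi}^i=r+(\chi^i+q_i)-r_i=\chi^i+r-p_i+q_i.
\]

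The intersection bundle formula follows the same template with $\CN_j:=\CM_j\cap(\CV^{(j-1)}+\bsy{\G}_j)$ replacing $\CM_j$. A parallel decomposition argument establishes $\ch\wh{\CV}^{(j)}_{j-1}=\bsy{\G}+\CN_j$ with $\bsy{\G}\cap\CN_j=\bsy{\G}_j$. To evaluate $\rank\CN_j$ I split through $\pi_j$: the kernel $\CN_j\cap\ch\CV^{(j)}$ evaluates to $\ch\CV^{(j)}_{j-1}+\bsy{\G}_j$, of rank $\chi^j_{j-1}+r_j-r_{j-1}$ after using $\bsy{\G}_{j-1}\subseteq\ch\CV^{(j)}_{j-1}$, while the image is $\CK_j\cap\pi_j(\CV^{(j-1)})$, of rank $q'_j$. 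Assembling the pieces produces $\wh{\chi}^j_{j-1}=\chi^j_{j-1}+r-p'_j+q'_j$. The step I expect to require the most care is verifying the image identity $\pi_j(\CN_j)=\CK_j\cap\pi_j(\CV^{(j-1)})$: one inclusion is immediate, but the reverse requires, given $\wt{Y}\in\CK_j\cap\pi_j(\CV^{(j-1)})$ with lift $C\in\CV^{(j-1)}$, adjusting $C$ by an element of $\ch\CV^{(j)}_{j-1}$ to produce a representative in $\CV^{(j-1)}+\bsy{\G}_j$ that still lies in $\CM_j$, using $\ch\CV^{(j)}\subseteq\CM_j$ to preserve $\CM_j$-membership throughout.
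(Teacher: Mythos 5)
Your overall strategy coincides with the paper's: establish $\wh{\CV}^{(i)}=\CV^{(i)}+\bsy{\G}$, identify $\ch\wh{\CV}^{(i)}$ as $\bsy{\G}$ together with the ``modified'' Cauchy vectors of $\CV^{(i)}$ (your $\CM_i$ is exactly the locus of the vectors $\wh{C}=K+f^aX_a$ in the paper's proof), relate $\CM_i$ to $\CK_i$ through $\pi_i$, and count ranks; your identifications $\ch\wh{\CV}^{(i)}=\bsy{\G}+\CM_i$ and $\pi_i(\CM_i)=\CK_i$ are correct, and your treatment is more systematic than the paper's. However, there is a genuine gap: the ``small but crucial lemma'' $\bsy{\G}_i\subseteq\ch\CV^{(i)}$ (hence $p_i=r_i$ and $p'_j=r_{j-1}$) is false in general. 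A section of $\bsy{\G}_i=\bsy{\G}\cap\CV^{(i)}$ is a $C^\infty(M)$-linear combination $Z=f^aX_a$ of the infinitesimal generators, and such a combination is not itself an infinitesimal symmetry of $\CV^{(i)}$: for $Y\in\CV^{(i)}$ one has $[Z,Y]=f^a[X_a,Y]-(Yf^a)X_a$, and the second term lies in $\bsy{\G}$ but in general not in $\CV^{(i)}$. Indeed, if your lemma held, then $[\bsy{\G}_i,\bsy{\G}_i]\subseteq\bsy{\G}\cap\ch\CV^{(i)}=\bsy{\G}_i$ and $\bsy{\G}_i$ would always be involutive, contradicting the paper's explicit remark after Definition \ref{transversalityDefn} that $\bsy{\G}_i$ need not be involutive. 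Note also that the paper only obtains $p'_j=r_{j-1}$ and $q'_j=0$ later (Theorem \ref{RDTofQuotient}), under the extra hypothesis that $\CV$ is a Goursat bundle, and by a rank-inequality argument rather than a pointwise containment.

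The lemma is load-bearing in your argument. Your own count gives $\wh{\chi}^i=r+\mathrm{rank}\,\CM_i-\mathrm{rank}(\bsy{\G}\cap\CM_i)=\chi^i+r-r_i+q_i$, and you invoke $p_i=r_i$ only to convert this into the stated $\chi^i+r-p_i+q_i$; the two expressions differ by $r_i-p_i=\mathrm{rank}\,\wt{\bsy{\G}}_i$, which is precisely the rank of the portion of $\CK_i$ already contributed by $\bsy{\G}_i$ itself. (The paper's bookkeeping starts instead from $\ch\CV^{(i)}+\bsy{\G}$, of rank $\chi^i+r-p_i$, and then adjoins the contribution of $\CK_i$; reconciling the two counts is exactly the delicate point, so your proof must either establish $p_i=r_i$ under an explicit hypothesis or track the overlap between $\CK_i$ and $\wt{\bsy{\G}}_i$ honestly rather than assume it away.) The same issue propagates into your intersection-bundle computation, where you use $\bsy{\G}_j\subseteq\ch\CV^{(j)}$ to write $\CN_j\cap\ch\CV^{(j)}=\ch\CV^{(j)}_{j-1}+\bsy{\G}_j$ and $\bsy{\G}_{j-1}\subseteq\ch\CV^{(j)}_{j-1}$ to evaluate its rank. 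The first part of your proof (the induction giving $\wh{\CV}^{(i)}=\CV^{(i)}\oplus\L_i$ and $\wh{m}_i=m_i+r-r_i$) is fine, apart from the minor imprecision that $[\CV^{(i-1)},\bsy{\G}]\subseteq\CV^{(i-1)}+\bsy{\G}$ rather than $\subseteq\CV^{(i-1)}$, which does not affect that conclusion.
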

\begin{proof}
We assume that $\CV$ is $\ell$-transverse relative to $\bsy{\G}$, for some $1\leq \ell\leq k-1$. This implies that
 $\wh{\CV}=\CV\oplus \bsy{\G}$ and more generally, for $i\geq 0$, 
$\wh{\CV}^{(i)}=\CV^{(i)}+\bsy{\G}$. Also, for any $i\geq 0$, we denote by $\L_i$ any sub-bundle complementary to $\bsy{\G}_i$, from which it easily follows that $\L_i\cap\CV^{(i)}=0$ which gives 
\eqref{hat{V}^ibundles} and 
$$
\text{rank}\wh{\CV}^{(i)}=\text{rank}\,\CV^{(i)}+\text{rank}\,\L_i,\ \ \ \ \ \ \forall\,i\geq 0\ \ \text{such that}\ \ \wh{m}_i\leq n.
$$
Therefore
$$
\wh{m}_i=m_i+r-r_i, \ \ \ \ \ \ \forall\,i\geq 0\ \ \text{such that}\ \ \wh{m}_i\leq n,
$$
which is the first equation in \eqref{hatVrefinedDerivedType}.

The Cauchy bundle $\ch\wh{\CV}^{(i)}$ of $\wh{\CV}^{(i)}$ naturally contains both $\ch\CV^{(i)}$
and $\bsy{\G}$ as sub-bundles, but they will often have nontrivial intersections. The ranks of these intersections
are labeled $p_i$, as in \eqref{bundleRankLabels}. Thus, $\mathrm{rank}\,\ch\wh{\CV}^{(i)}$ is at least as large as
$\chi^i+r-p_i$. But, in fact, it may be larger than this because there may be elements $K\in \CV^{(i)}$ such that $K\notin \ch\CV^{(i)}$, but are elements of the form 
\begin{equation}\label{extraCauchyVectors}
\wh{C}=K+f^aX_a\ \ \ \text{where}\ \ \ \ X_a\in\bsy{\G}, 
\end{equation}
which belong to $\ch\wh{\CV}^{(i)}$, for some smooth functions $f^a$ on $M$. In other words a vector field $K\in\CV^{(i)}$ that is not an element of 
$\ch\CV^{(i)}$ may be 
$``\text{modified}"$ by elements of $\bsy{\G}$ so as to be a Cauchy vector for $\wh{\CV}^{(i)}$. 
Besides $\bsy{\G}$ and $\ch\CV^{(i)}$, this is the only other possible source of Cauchy vectors for 
$\wh{\CV}^{(i)}$. Suppose $K$ is such a vector field. To avoid triviality we choose
$Z\in\CV^{(i)}$ such that ${\delta}_i(K,Z)\neq 0$, where ${\delta}_i$ is the structure tensor of ${\CV}^{(i)}$. We will show that $K\in\CK_i$ given by \eqref{bundlesK} in Definition \ref{bundlesKdefn}.
A calculation shows that for $\wh{C}\in\ch\wh{\CV}^{(i)}$, it is necessary and sufficient that $[K,Z]\in \wh{\CV}^{(i)}$. That is, $[K,Z]$ is a symmetry of $\wh{\CV}^{(i)}$. But since
$\delta_i(K,Z)\neq 0$ we have $[K,Z]\in\CV^{(i+1)}$ and $[K,Z]\notin\CV^{(i)}$. It follows that
$[K,Z]\in\bsy{\G}_{i+1}\mod \CV^{(i)}$. Since $K\notin\ch\CV^{(i)}$, we can characterize $K$ as belonging to the bundle $\bsy{\wt{\G}}_{i+1}/\wt{\CV}^{(i)}$; in other words, $K\in\CK_i$. This proves the second formula in \eqref{hatVrefinedDerivedType},
$$
\wh{\chi}^i=\chi^i+r-p_i+q_i,\ \ \ \forall\, i\geq 0\ \ \text{such that}\ \ \wh{m}_i<n,
$$
according to the definition of $q_i$ in \eqref{bundleRankLabels}. 

To establish the last formula in \eqref{hatVrefinedDerivedType} we recall that
$\ch\wh{\CV}^{(j)}_{j-1}=\wh{\CV}^{(j-1)}\cap\ch\wh{\CV}^{(j)}$
and note that it must contain $\bsy{\G}$ and $\ch\CV^{(j)}_{j-1}$ as sub-bundles. But their union will be linearly dependent because we have double counted the bundle 
$\bsy{\G}_{j-1}\subset\CV^{(j-1)}\cap\ch{\CV}^{(j)}$. Hence
$\wh{\chi}^j_{j-1}\geq\chi^j_{j-1}+r-p_j'$. Finally, as before there may be vector fields 
$K\in\CV^{(j-1)}$ that give rise to elements of the form \eqref{extraCauchyVectors} that also belong to
$\CK_j$ and contained in $\ch\wh{\CV}^{(j)}_{j-1}$. This leads to the final formula in
\eqref{hatVrefinedDerivedType} as per the definitions in \eqref{bundleRankLabels}.
\end{proof}

A particularly nice conclusion that can be drawn from Theorem \ref{ell-transverseQuotient-refDerTypeTHM}
is that we can deduce the deceleration of $\wh{\CV}$, and hence $\CV/G$, immediately in terms of the 
$``\text{intersection deceleration}"$ of $\bsy{\G}$ relative to $\CV$.

\begin{cor}\label{decelVhat}
Let $\CV$ be a bracket generating distribution over an $n$-dimensional manifold $M$ of derived length $k>1$ that is invariant under the action of an $r$-dimensional Lie group $G$ with sub-bundle $\bsy{\G}$ of infinitesimal generators.
Suppose the action is $\ell$-{\it transverse relative to} $\CV$ for some $1\leq\ell\leq k-1$. 
Then the deceleration of $\wh{\CV}$ is given by
\begin{equation}
\mathrm{decel}\left(\wh{\CV}\right)=
\left\langle \nabla^2_2-\Delta^2_2,\ 
\nabla^2_3-\Delta^2_3,\ldots, \nabla^2_{\wh{k}}-\Delta^2_{\wh{k}}, 
\Delta_{\wh{k}}-\nabla_{\wh{k}}\right\rangle,
\end{equation}
where $\nabla_i=r_i-r_{i-1}, \nabla_i^2=\nabla_i-\nabla_{i-1}$, and $\wh{k}$ is the derived length of 
$\wh{\CV}$.
\end{cor}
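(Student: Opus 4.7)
The plan is to derive $\mathrm{decel}(\wh{\CV})$ as a direct algebraic consequence of the rank identity $\wh{m}_i = m_i + r - r_i$ proved in Theorem \ref{ell-transverseQuotient-refDerTypeTHM}. By Definition \ref{decel}, the deceleration of any totally regular distribution is assembled from the first and second differences of its rank sequence, so once the ranks are in hand everything reduces to taking differences and matching notation.

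First, I would form the velocity sequence of $\wh{\CV}$. Substituting the rank identity yields
\begin{equation*}
\wh{\Delta}_i := \wh{m}_i - \wh{m}_{i-1} = (m_i - m_{i-1}) - (r_i - r_{i-1}) = \Delta_i - \nabla_i,
\end{equation*}
with $\nabla_i = r_i - r_{i-1}$. Taking the second difference,
\begin{equation*}
\wh{\Delta}^2_i := \wh{\Delta}_i - \wh{\Delta}_{i-1} = \Delta^2_i - \nabla^2_i,
\end{equation*}
so $-\wh{\Delta}^2_i = \nabla^2_i - \Delta^2_i$. Substituting into $\mathrm{decel}(\wh{\CV}) = \langle -\wh{\Delta}^2_2, \ldots, -\wh{\Delta}^2_{\wh{k}}, \wh{\Delta}_{\wh{k}}\rangle$ produces precisely the list in the statement, with the final entry recovering $\wh{\Delta}_{\wh{k}} = \Delta_{\wh{k}} - \nabla_{\wh{k}}$.

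The only technical point concerns the derived length $\wh{k}$ of $\wh{\CV}$. Since $\bsy{\G}$ consists of symmetries of $\CV$, the proof of Theorem \ref{ell-transverseQuotient-refDerTypeTHM} already records that $\wh{\CV}^{(i)} = \CV^{(i)} + \bsy{\G}$ for every $i \geq 0$. The elementary inclusion–exclusion formula for subspace ranks then gives $\wh{m}_i = m_i + r - r_i$ not only when $\wh{m}_i < n$, as stated in \eqref{hatVrefinedDerivedType}, but also at the terminal index $i = \wh{k}$, where the right-hand side equals $n$ by definition of $\wh{k}$. This extension is what legitimates the computation of $\wh{\Delta}_{\wh{k}}$ and $\wh{\Delta}^2_{\wh{k}}$. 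Because $\wh{\CV}$ contains $\CV$, we always have $\wh{k} \leq k$, so every $\Delta_i$ and $\nabla_i$ occurring in the formula is well defined.

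I expect no substantive obstacle: the corollary is pure bookkeeping built on top of the first rank identity in \eqref{hatVrefinedDerivedType}. The mildly delicate step is simply confirming that the rank identity propagates to the boundary index $\wh{k}$, which the observation above resolves cleanly.
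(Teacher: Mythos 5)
Your proposal is correct and is exactly the difference computation the paper leaves implicit: the corollary is stated without proof as an immediate consequence of the first rank identity $\wh{m}_i=m_i+r-r_i$ in Theorem \ref{ell-transverseQuotient-refDerTypeTHM}, and your first/second differences $\wh{\Delta}_i=\Delta_i-\nabla_i$, $\wh{\Delta}^2_i=\Delta^2_i-\nabla^2_i$ reproduce the stated list via Definition \ref{decel}. Your remark that the identity extends to the terminal index $i=\wh{k}$ (since $\wh{\CV}^{(i)}=\CV^{(i)}+\bsy{\G}$ for all $i$ and the fiberwise dimension count is unconditional) is a worthwhile point that the theorem's hypothesis $\wh{m}_i<n$ glosses over.
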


Notice that Corollary \ref{decelVhat} strongly constricts the transversality ranks.

\begin{cor}\label{tot-transverse-ranks-cor}
Let $\CV$ be a bracket generating distribution over an $n$-dimensional manifold $M$ of derived length $k>1$ that is invariant under the action of an $r$-dimensional Lie group $G$ with sub-bundle $\bsy{\G}$ of infinitesimal generators.
Suppose the action is $\ell$-{\it transverse for} $\CV$ for some $1\leq\ell\leq k-1$. If
$r=\sum_{j=\ell+1}^k\Delta_j$ then the derived length of $\wh{\CV}$ is $\ell$ and
$$
\begin{aligned}
&\wh{m}_i=m_i+r, \ \ 0\leq i\leq\ell-1,\cr
&\wh{\chi}^i=\chi^i+r, \ \ 0\leq i\leq\ell-1,\cr
&\wh{\chi}^j_{j-1}=\chi^j_{j-1}+r, , \ \ 1\leq j\leq\ell-1.
\end{aligned}
$$
\end{cor}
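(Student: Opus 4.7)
The plan is to deduce the corollary as a direct specialization of Theorem \ref{ell-transverseQuotient-refDerTypeTHM}, with the hypothesis $r=\sum_{j=\ell+1}^k\Delta_j$ and $\ell$-transversality serving to force most of the correction terms in \eqref{hatVrefinedDerivedType} to vanish. The first step is to translate the hypothesis on $r$ into information about ranks. Since $\CV$ is bracket generating, $m_k=n$, so
\[
r=\sum_{j=\ell+1}^k\Delta_j=m_k-m_\ell=n-m_\ell,
\]
which means $m_\ell+r=n$. Combined with $\ell$-transversality, which gives $r_i=0$ for all $0\leq i\leq\ell$, the first formula of \eqref{hatVrefinedDerivedType} yields
\[
\wh{m}_i=m_i+r,\qquad 0\leq i\leq\ell,
\]
provided each $\wh{m}_i\leq n$. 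For $i\leq\ell-1$ one has $\wh{m}_i=m_i+r=n-(m_\ell-m_i)<n$ because $\Delta_{i+1},\ldots,\Delta_\ell$ are each positive (recall $\ell\leq k-1$ so $\Delta_\ell>0$), while for $i=\ell$ one gets $\wh{m}_\ell=n$. This already shows that the derived length of $\wh{\CV}$ is exactly $\ell$.

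The second step is to verify that all the correction terms $p_i$, $q_i$, $p_j'$, $q_j'$ appearing in \eqref{hatVrefinedDerivedType} vanish for the relevant index ranges. For $0\leq i\leq\ell-1$, one has
\[
p_i=\mathrm{rank}(\bsy{\G}\cap\ch\CV^{(i)})\leq\mathrm{rank}(\bsy{\G}\cap\CV^{(i)})=r_i=0,
\]
and likewise $p_j'\leq r_{j-1}=0$ for $1\leq j\leq\ell-1$. This is the direct consequence of $\ell$-transversality.

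The main subtle point, and the one I expect to require the most care, is the vanishing of $q_i=\mathrm{rank}\,\CK_i$ and of $q_j'=\mathrm{rank}(\CK_j\cap\CV^{(j-1)})$. The key observation is that, for $0\leq i\leq\ell-1$, one has $i+1\leq\ell$ and therefore $\bsy{\G}_{i+1}=\CV^{(i+1)}\cap\bsy{\G}=\{0\}$ by $\ell$-transversality, hence also $\wt{\bsy{\G}}_{i+1}=0$. Inspecting the definition \eqref{bundlesK},
\[
\CK_i=\bigl\{\wt X\in\wt{\CV}^{(i)}\ \big|\ \wt\delta_i(\wt X,\wt Y)\equiv 0\ \mathrm{mod}\ \wt{\CV}^{(i)},\ \forall\,\wt Y\in\wt{\CV}^{(i)}\bigr\},
\]
i.e.\ $\CK_i=\ch\wt{\CV}^{(i)}$. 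But $\wt{\CV}^{(i)}=(\pi_i)_*\CV^{(i)}$ is exactly the quotient of $\CV^{(i)}$ by its Cauchy bundle, so $\ch\wt{\CV}^{(i)}=0$ and thus $q_i=0$, which also forces $q_j'=0$ for $1\leq j\leq\ell-1$.

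Combining these vanishings with the second and third formulas of \eqref{hatVrefinedDerivedType} yields
\[
\wh{\chi}^i=\chi^i+r,\qquad 0\leq i\leq\ell-1,\qquad \wh{\chi}^j_{j-1}=\chi^j_{j-1}+r,\qquad 1\leq j\leq\ell-1,
\]
completing the corollary. The only nontrivial step is the identification $\CK_i=\ch\wt{\CV}^{(i)}$ in the case $\wt{\bsy{\G}}_{i+1}=0$; once this is in hand, the rest is bookkeeping on top of Theorem \ref{ell-transverseQuotient-refDerTypeTHM}.
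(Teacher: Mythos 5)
Your proof is correct and follows essentially the same route as the paper: establish that $\CV^{(\ell)}\oplus\bsy{\G}$ already frames $TM$ (so the derived length of $\wh{\CV}$ is $\ell$) and then read off the type numbers from Theorem \ref{ell-transverseQuotient-refDerTypeTHM}. Your explicit verification that $p_i=p_j'=0$ (from $\ell$-transversality) and that $q_i=q_j'=0$ (via $\CK_i=\ch\wt{\CV}^{(i)}=0$ when $\wt{\bsy{\G}}_{i+1}=0$) fills in details the paper leaves implicit, and matches the reasoning the authors themselves use in their worked example.
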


\begin{proof}
The condition on the dimension $r$ of the symmetry group $G$ together with $\ell$-transversality forces 
$\bsy{\G}$ to complete any frame of sections of $\CV^{(\ell)}$ to a framing of $TM$, resulting in the derived length of $\wh{\CV}$ being $\ell$. Then the refined derived type integers $\wh{m}_i, \wh{\chi}^i, \wh{\chi}^j_{j-1}$ follow from Theorem \ref{ell-transverseQuotient-refDerTypeTHM}.
\end{proof}

\begin{exmp} 
We now provide an example illustrating Theorem \ref{ell-transverseQuotient-refDerTypeTHM}.
\vskip 6 pt
\noindent{\bf Control system:}
$$
\begin{aligned}
\CW:=\mathrm{span}\Big\{X:=\P t+\left(x^1u^1+x^2\right)\P {x^1}&+u^2\P {x^2}+\left(u^1(x^1)^2+x^1x^2+x^4\right)\P {x^3}\cr
&+x^5\P {x^4}+x^6\P {x^5}+x^7\P {x^6}+u^1\P {x^7},\ \P {u^1},\ \P {u^2}\Big\}
\end{aligned}
$$
\noindent{\bf Symmetries:}\newline
$$
\begin{aligned}
&Y_1=x^1\P {x^1}+x^2\P {x^2}+(x^1)^2\P {x^3}+u^2\P {u^2}\cr
&Y_2=t\P {x^3}+\P {x^4}
\end{aligned}
$$
\noindent {\bf Refined derived type:}\newline
$$
\mathfrak{d}_r(\CW)=[[3, 0], [5, 2, 2], [7, 4, 5], [8, 6, 6], [9, 7, 7], [10, 10]]
$$
Let $\bsy{\G}$ be generated by $Y_1, Y_2$. Then we get
$$
\begin{aligned}
&\mathfrak{d}_r(\CW\oplus \Gamma
)=[[5, 2], [7, 4, 5], [8, 6, 6], [9, 7, 7], [10, 10]]
\end{aligned}
$$
\noindent {\bf Transverse bundles:}\ \   $\G_i=:\G\cap \CW^{(i)}$, $\G=\mathrm{span}\{Y_1, Y_2\}${\bf :}  \hfill $(r_i=\text{rk}\,\G_i)$
$$
\G_0=\G_1=\mathrm{span}\{0\}, \ \ \G_2=\G_3=\G_4=\mathrm{span}\{Y_1\}
$$
\noindent {\bf Bundles:} $\CP_i:={\G \cap \ch\CV^{(i)}}$:\hfill  $(p_i=\text{rk}\,\CP_i)$\newline
$$
\CP_0=\CP_1=\mathrm{span}\{0\},\ \ \CP_2=\CP_3=\CP_4=\mathrm{span}\{Y_1\}
$$
\noindent {\bf Bundles:} $\CP_j':={\G_{j-1}\cap \ch\CV^{(j)}}$:\hfill  $(p'_i=\text{rk}\,\CP'_i)$\newline
$$
\CP_1'=\CP_2'=\mathrm{span}\{0\},\ \ \CP_3'=\mathrm{span}\{Y_1\}
$$

\noindent {\bf Cauchy bundles:}
$$
\begin{aligned}
&\ch\CW=\mathrm{span}\{0\}\cr
&\ch\CW^{(1)}=\mathrm{span}\{\P {u^1}, \P {u^2}\}\cr
&\ch\CW^{(2)}=\ch\CW^{(1)}\oplus\mathrm{span}\left\{\P {x^2}, \P {x^1}+x^1\P {x^3}, \P {x^7}\right\}\cr
&\ch\CW^{(3)}=\ch\CW^{(2)}\oplus\mathrm{span}\left\{x^2(\P {x^1}+x^1\P {x^3})-\P {x^6}\right\}\cr
&\ch\CW^{(4)}=\ch\CW^{(3)}\oplus\mathrm{span}\left\{(x^2u^1-u^2)(\P {x^1}+x^1\P {x^3})-\P {x^5}\right\}
\end{aligned}
$$
\noindent {\bf Bundles $\CK_i$:}\ \hfill $(q_i=\text{rk}\,\CK_i)$\newline
Let $\wt{\CW}^{(i)}=\CW^{(i)}/\ch\CW^{(i)}$, \ \ $\wt{\G}_i=\pi_i\left(\G\right)$, where 
$\pi_i: TM\to TM/\ch\CW^{(i)}$. Let $\wt{\delta}_i:\L^2\wt{\CW}^{(i)}\to \wt{TM}/\wt{\CW}^{(i)}$ be the structure tensor for each $i$ and 
$$
\wt{\s}_i: \wt{\CW}^{(i)}\to \text{Hom}\left(\wt{\CW}^{(i)}, \wt{TM}/\wt{\CW}^{(i)}\right)
$$ 
be defined by $\wt{\s}_i(X)(Y)=\wt{\delta}_i(X,Y)$. Let
$$
\CK_i=\left\{X\in\wt{\CW}^{(i)}~|~\wt{\s}_i(X)(Y)\in\wt{\G}_{i+1}\mod\wt{\CW}^{(i)},\ \ 
\forall \ Y\in\wt{\CW}^{(i)}\right\}.
$$
We need the $\wt{\CW}^{(i)}$:
$$
\begin{aligned}
&\wt{\CW}^{(0)}=\CW\cr
&\wt{\CW}^{(1)}=\mathrm{span}\{X\}\oplus\mathrm{span}\{\P {x^2}, x^1(\P {x^1}+x^1\P {x^3})+\P {x^7}\}\cr
&\wt{\CW}^{(2)}=\wt{\CW}^{(1)}\oplus\mathrm{span}\{\P {x^1}+x^1\P {x^3},\ \P {x^6}\}\cr
&\wt{\CW}^{(3)}=\wt{\CW}^{(2)}\oplus\mathrm{span}\left\{\P {x^5}\right\}\cr
&\wt{\CW}^{(4)}=\wt{\CW}^{(3)}\oplus\mathrm{span}\left\{\P {x^4}\right\}\cr
&\wt{\CW}^{(5)}=\wt{\CW}^{(4)}\oplus\mathrm{span}\left\{\P {x^3}\right\}
\end{aligned}
$$
and the $\wt{\G}_{i+1}=\pi_i(\G_{i+1})$:
$$
\begin{aligned}
&\wt{\G}_1=\mathrm{span}\{0\}\cr
&\wt{\G}_2=\mathrm{span}\{x^1\P {x^1}+x^2\P {x^2}+(x^1)^2\P {x^3}\}\cr
&\wt{\G}_3=\mathrm{span}\{0\}\cr
&\wt{\G}_4=\mathrm{span}\{0\}\cr
\end{aligned}
$$

It is easy to show that $\wt{\s}(X)$ is the zero map if and only if $X$ is a Cauchy vector. Hence if
$\wt{\G}_i\mod\wt{\CW}^{(i)}$ is trivial then $\CK_i$ will be nontrivial if and only if $X$ is a Cauchy vector. 
Since $\ch\wt{\CW}^{(0)}=\mathrm{span}\{0\}$  we have  $\CK_0=\mathrm{span}\{0\}$.  

Next 
$$
\wt{\G}_2\mod \wt{\CW}^{(1)}=\mathrm{span}\left\{x^1(\P {x^1}+x^1\P {x^3})\right\}
$$
and we have $X, \P {x^2}\in\wt{\CW}^{(1)}$.  
Since $[\P {x^2}, X]=\P {x^1}+x^1\P {x^3}\in \wt{\G}_2\mod \wt{\CW}^{(1)}$, we get
$$
\CK_1=\mathrm{span}\{\P {x^2}\},
$$
since no other elements $K$ of $\wt{\CW}^{(1)}$ have an image in $\wt{\G}_2$ under $\wt{\s}_1(K)$.

The remaining $\CK_i$ are trivial because $\wt{\G}_3, \wt{\G}_4$ are trivial and the $\wt{\CW}^{(j)}$ contain no
Cauchy vectors by construction.
\vskip 10 pt
\noindent{\bf Bundles} $\CK_j\cap\CW^{(j-1)}$\ \hfill $(q_j'=\text{rk}\,(\CK_i\cap\CW^{(j-1)}))$ \newline
By inspection we see that  all these bundles are trivial over the allowed range  $j=1, 2, 3$.
\vskip 10 pt
\noindent{\bf Bundles} $\G_{j-1}\cap\ch\CW^{(j)}$\ \hfill $(p'_j=\text{rk}\,(\G_{j-1}\cap\ch\CW^{(j)}))$\newline
From the data given above, we see that these bundles are also trivial  over the allowed range  $j=1, 2, 3,4$.

This completes the construction of all the bundles necessary to compute the refined derived type of 
$\CW/G$. One can now compare the refined derived types using Theorem \ref{ell-transverseQuotient-refDerTypeTHM}.
\end{exmp}

Notice that the control system in the above example is Goursat and so is its augmented bundle. This is no coincidence, see Theorem \ref{GoursatQuotientsRGoursat}.

\section{Linearizable Quotients of Invariant Control Systems}\label{generalLinearizableQuotients}
Let $\CV\subset TM$ be a sub-bundle invariant under the regular action of a Lie group $G$ with sub-bundle of infinitesimal generators 
$\bsy{\G}$ and assume that $G$ acts strongly transitively on $M$ relative to $\CV$; that is, $\CV^{(1)}\cap\bsy{\G}=\mathrm{span}\{0\}$. 
In this section we establish a characterization of those $G$-invariant sub-bundles whose quotients $\CV/G$ by the action of $G$ are locally diffeomorphic to some partial prolongation of the contact system on the jet space 
$J^1(\B R,\B R^m)$, $m\geq 2$. As an application, we characterize those $G$-invariant control systems $\CV$
that have \sfl\ quotients $\CV/G$. Recall that interest in \sfl\ quotients arises from the fact that they can be used to construct dynamic feedback linearizations of given control system $\CV$, \cite{CKV24}. 
\subsection{Characterization of linearizable quotients}
We start this subsection with a simple lemma concerning the maximal rank of the intersection bundles $\ch\CV^{(j)}_{j-1}$ of a totally regular bracket generating distribution $\mcal{V}$. 
\begin{lem}\label{maxIntersections}
Let $\CV\subset TM$ be a totally regular bracket generating sub-bundle over manifold $M$ of derived length $k>1$.
If $0<j<k$ then the maximum rank of the intersection bundle $\ch\CV^{(j)}_{j-1}:=\CV^{(j-1)}\cap\ch\CV^{(j)}$ is $m_{j-1}-1$ where 
$m_j:=\text{rank}\,\CV^{(j)}$. If, in addition, $\mathrm{rank}\,\ch\CV^{(i)}=2m_i-m_{i+1}-1$, $0\leq i<k$ then $\CV$ has the refined derived type of a Goursat bundle.
\end{lem}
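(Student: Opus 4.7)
Part (1) can be proved by contradiction. Suppose $\ch\CV^{(j)}_{j-1}=\CV^{(j-1)}$, equivalently $\CV^{(j-1)}\subseteq\ch\CV^{(j)}$, so that $[\CV^{(j-1)},\CV^{(j)}]\subseteq\CV^{(j)}$. Writing $\CV^{(j)}=\CV^{(j-1)}+[\CV^{(j-1)},\CV^{(j-1)}]$, the Jacobi identity
\[
[[A_1,A_2],[B_1,B_2]]=[A_1,[A_2,[B_1,B_2]]]-[A_2,[A_1,[B_1,B_2]]]
\]
together with $[A_i,[B_1,B_2]]\in[\CV^{(j-1)},\CV^{(j)}]\subseteq\CV^{(j)}$ shows that every bracket of two sections of $\CV^{(j)}$ lies again in $\CV^{(j)}$. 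Hence $\CV^{(j+1)}=\CV^{(j)}$, contradicting $j<k$, so $\mathrm{rank}\,\ch\CV^{(j)}_{j-1}\leq m_{j-1}-1$.

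For the additional assertion, the key geometric input is that the structure tensor $\delta_j\colon\Lambda^2\CV^{(j)}\to\CV^{(j+1)}/\CV^{(j)}$ vanishes on $\Lambda^2\CV^{(j-1)}$, since $[\CV^{(j-1)},\CV^{(j-1)}]\subseteq\CV^{(j)}$. Choosing any complement with $\CV^{(j)}=\CV^{(j-1)}\oplus C_j$, the associated curried map $\sigma_j\colon\CV^{(j)}\to\mathrm{Hom}(\CV^{(j)},\CV^{(j+1)}/\CV^{(j)})$, whose kernel is $\ch\CV^{(j)}$, has the block form determined by an off-diagonal bilinear map $A\colon\CV^{(j-1)}\otimes C_j\to\CV^{(j+1)}/\CV^{(j)}$ and an antisymmetric piece $B\colon\Lambda^2 C_j\to\CV^{(j+1)}/\CV^{(j)}$. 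The restriction $\sigma_j|_{\CV^{(j-1)}}$ depends only on $A$, giving $\chi^j_{j-1}=m_{j-1}-\mathrm{rank}\,\hat A$, where $\hat A\colon\CV^{(j-1)}\to\mathrm{Hom}(C_j,\CV^{(j+1)}/\CV^{(j)})$ is the curried form of $A$. The hypothesis $\chi^j=2m_j-m_{j+1}-1$ translates to $\mathrm{rank}\,\sigma_j=\Delta_{j+1}+1$, and combined with the antisymmetry of $\sigma_j$, which couples $\hat A$ to the dual currying $\tilde A\colon C_j\to\mathrm{Hom}(\CV^{(j-1)},\CV^{(j+1)}/\CV^{(j)})$, the contributions to $\mathrm{Im}\,\sigma_j$ coming from $\sigma_j|_{\CV^{(j-1)}}$ and $\sigma_j|_{C_j}$ must reconcile to the total rank $\Delta_{j+1}+1$; this forces $\mathrm{rank}\,\hat A=1$. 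Part (1) provides the opposite inequality, so $\chi^j_{j-1}=m_{j-1}-1$. Finally, $\chi^j_{j-1}\leq\chi^j$ together with the hypothesis forces $\Delta_{j+1}\leq\Delta_j$, so the numerical data $(m_i,\chi^i,\chi^j_{j-1})$ match the Brunovsk\'y form of signature $\mathrm{decel}(\CV)$ via Proposition \ref{refined derived type numbers}.

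The main obstacle is the dimension count underlying $\mathrm{rank}\,\hat A=1$. The cleanest route I see is to reduce first to the scalar-valued case $\Delta_{j+1}=1$: there $\sigma_j$ is a single antisymmetric real-valued form of rank $2$, and a direct block-matrix computation shows that $\mathrm{rank}\,\hat A\geq 2$ would increase $\mathrm{rank}\,\sigma_j$ to at least $4$, contradicting the hypothesis, while the possibility $\mathrm{rank}\,\hat A=0$ is excluded by part (1). The general case $\Delta_{j+1}\geq 2$ should follow by examining each scalar component $\sigma_j^{(k)}$ separately and exploiting the shared rank constraint $\mathrm{rank}\,\sigma_j=\Delta_{j+1}+1$ to propagate the scalar conclusion; connections with Corollary \ref{BryantWeberEquivalence} on Bryant sub-bundles may streamline this step by providing a corank-one subdistribution of $\CV^{(j)}$ that both contains $\ch\CV^{(j)}$ and meets $\CV^{(j-1)}$ in corank one.
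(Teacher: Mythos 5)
Your proof of the first assertion is correct and is essentially the paper's argument in different clothing. The paper deduces from $\CV^{(j-1)}\subseteq\ch\CV^{(j)}$ that $\CV^{(j)}=\CV^{(j-1)}+[\CV^{(j-1)},\CV^{(j-1)}]\subseteq\ch\CV^{(j)}$ (using integrability of the Cauchy bundle) and hence that $\CV^{(j)}$ is integrable; you instead verify $[\CV^{(j)},\CV^{(j)}]\subseteq\CV^{(j)}$ directly with the Jacobi identity. Either way the derived flag stabilizes at step $j<k$, contradicting the derived length, so this half is fine.

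The second assertion is where the problem lies. The paper disposes of it by citation to \cite{Vassiliou2006a}, eq.\ (3.5); you attempt a genuine proof, and the one step carrying all the content --- that the hypothesis $\chi^j=2m_j-m_{j+1}-1$ forces $\mathrm{rank}\,\widehat{A}=1$, i.e.\ $\chi^j_{j-1}=m_{j-1}-1$ --- is precisely the step you do not establish. It is in fact \emph{not} a consequence of the data your argument actually uses (namely that $\delta_j$ vanishes on $\Lambda^2\CV^{(j-1)}$ and that $\ker\sigma_j$ has corank $\Delta_{j+1}+1$). Concretely, take $V=\mathbb{R}^5$ with $W=\mathrm{span}\{e_1,e_2,e_3\}$, $C=\mathrm{span}\{e_4,e_5\}$, $Q=\mathbb{R}^2=\mathrm{span}\{f_1,f_2\}$, and set $\delta(e_1,e_4)=f_1$, $\delta(e_2,e_4)=f_2$, all other basis brackets zero. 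Then $\delta$ vanishes on $\Lambda^2W$, its image spans $Q$, and $\ker\sigma=\mathrm{span}\{e_3,e_5\}$ has corank $3=\Delta_{j+1}+1$, yet $W\cap\ker\sigma=\mathrm{span}\{e_3\}$ has corank $2$ in $W$, so $\mathrm{rank}\,\widehat{A}=2$. Moreover each scalar component $e_1^*\wedge e_4^*$, $e_2^*\wedge e_4^*$ is a decomposable alternating form of rank $2$, so the proposed remedy of ``examining each scalar component separately and exploiting the shared rank constraint'' does not close the gap either. Your $\Delta_{j+1}=1$ case is sound (a rank-two scalar alternating form is decomposable, and vanishing on $\Lambda^2W$ lets one factor be chosen to annihilate $W$), but for $\Delta_{j+1}\geq 2$ any correct argument must inject geometric information beyond this multilinear algebra --- for instance the Jacobi relations tying $\delta_j$ on $W\times C$ to the fact that $C$ is spanned modulo $W$ by brackets of sections of $W$, or the Engel-rank-one/Bryant sub-bundle equivalences of Proposition \ref{bryant2} and Corollary \ref{BryantWeberEquivalence}. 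As written, the second half of the lemma is not proved.
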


\begin{proof}
Suppose on the contrary $\chi^j_{j-1}:=\text{rank}\,\ch\CV^{(j)}_{j-1}>m_{j-1}-1$ in which case $\chi^j_{j-1}=m_{j-1}$;
and this means $\CV^{(j-1)}\subseteq\ch\CV^{(j)}$. If $\CV^{(j-1)}\neq\ch\CV^{(j)}$ then we can complete
$\CV^{(j-1)}$ by taking Lie brackets so that $\CV^{(j-1)}+[\CV^{(j-1)},\CV^{(j-1)}]\subseteq\ch\CV^{(j)}$ since the latter is integrable. But the left-hand-side of this equation is precisely the definition of $\CV^{(j)}$ and this implies that $\CV^{(j)}$ is integrable. Since $j<k$ this contradicts the hypothesis that the derived length is $k$, which proves the lemma. Finally, if $\mathrm{rank}\,\ch\CV^{(i)}=2m_i-m_{i+1}-1$ then $\CV$ has refined derived type of a relative Goursat bundle by \cite{Vassiliou2006a}, equation (3.5).
\end{proof}

The import of Lemma \ref{maxIntersections} is that choosing (when possible) the symmetry algebra $\bsy{\G}$ such that intersection bundles have maximal rank then reduces the check for linearizable quotient control systems to checking the integrability of certain canonical bundles. This notion is first applied to a general sub-bundle in Theorem \ref{linearizableQuotientExGoursat} and subsequently applied to the special case of control systems.

\begin{thm}\label{linearizableQuotientExGoursat}
Let $\CV\subset TM$ be a sub-bundle over manifold $M$ and $G$ its Lie group of symmetries acting regularly and strongly transitively relative to $\CV$ with sub-bundle of infinitesimal generators 
$\bsy{\G}$. Suppose that 
$\wh{\CV}:=\CV\oplus\bsy{\G}\subset TM$ is totally regular of derived length $\wh{k}$  such that the intersection bundle
$\ch\wh{\CV}^{(j)}_{j-1}$ is maximal and integrable for each $j$. If $\Delta_{\wh{k}}>1$, suppose $\wh{\CV}^{(\wh{k}-1)}$ contains an integrable Weber structure. Then the quotient $(M/G, \CV/G)$ is locally diffeomorphic to a Brunovsky normal form.
\end{thm}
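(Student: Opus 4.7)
The plan is to recognize $\wh\CV$ as a relative Goursat bundle (in the sense of Section \ref{relativeGoursatSect}, extended to the $\Delta_{\wh k}>1$ case via the Weber-structure machinery of Section \ref{WeberAndBryantSect}) and then invoke Theorem \ref{relGoursatThm} to deduce that $\CV/G$ is locally diffeomorphic to the Brunovsk\'y normal form $\CB_\k$ of signature $\k=\mathrm{decel}(\wh\CV)$. The central geometric observation is that $\bsy\G$ will lie inside the Cauchy flag of $\wh\CV$, so that $\wh\CV$ behaves on $M$ exactly like a lift of a Goursat bundle on $M/G$.

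The first step is to verify $\bsy\G\subseteq\ch\wh\CV^{(i)}$ for every $i$. Because each $X\in\bsy\G$ is an infinitesimal symmetry of $\CV$ we have $[X,\CV^{(i)}]\subseteq\CV^{(i)}$, and because $\bsy\G$ is a Lie subalgebra $[X,\bsy\G]\subseteq\bsy\G$; combining these gives $[X,\wh\CV^{(i)}]\subseteq\wh\CV^{(i)}$. This is what allows the symmetry reduction to produce the Brunovsk\'y normal form on $M/G$: the Cauchy sections from $\bsy\G$ are precisely the directions that disappear under $\pi:M\to M/G$.

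The second step is to check that the type numbers of $\wh\CV$ match those of a partial prolongation up to the nonzero Cauchy rank $\wh\chi^0$ coming from $\bsy\G$; that is, the relations of Proposition \ref{refined derived type numbers} hold. The hypothesis that each $\ch\wh\CV^{(j)}_{j-1}$ is maximal gives $\wh\chi^j_{j-1}=\wh m_{j-1}-1$ directly from Lemma \ref{maxIntersections}. The matching Cauchy rank formula $\wh\chi^j=2\wh m_j-\wh m_{j+1}-1$ is then extracted by a rank count along the derived flag that uses (i) the inclusion $\ch\wh\CV^{(j)}_{j-1}\subseteq\ch\wh\CV^{(j)}$ with maximality of the intersection, (ii) the derived-rank jumps $\Delta_j=\wh m_j-\wh m_{j-1}$, and (iii) the fact that any Cauchy section of $\wh\CV^{(j)}$ not already in $\ch\wh\CV^{(j)}_{j-1}$ must contribute a genuinely new direction in $\wh\CV^{(j)}/\wh\CV^{(j-1)}$. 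Together with the second assertion of Lemma \ref{maxIntersections}, this pins down the refined derived type of $\wh\CV$ as that of a relative Goursat bundle.

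With the refined derived type established, the third step is to verify the integrability data of Definition \ref{Goursat}. The intersection bundles are integrable by hypothesis; each $\ch\wh\CV^{(i)}$ is automatically integrable in the totally regular setting. When $\Delta_{\wh k}=1$ the fundamental bundle $\Pi^{\wh k}$ is integrable by Theorem 4.2 of \cite{Vassiliou2006a}, and when $\Delta_{\wh k}>1$ integrability of the resolvent is precisely the hypothesized integrable Weber structure via Theorem \ref{BryantIffWeber} and Corollary \ref{BryantWeberEquivalence}. Thus $\wh\CV$ is a relative Goursat bundle of signature $\k$, and Theorem \ref{relGoursatThm} (with its Weber/resolvent extension when $\Delta_{\wh k}>1$) supplies the local diffeomorphism $\vf:M/G\to J^\k$ with $\vf_*(\CV/G)=\CB_\k$. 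The main obstacle is step two: the hypotheses state only maximality and integrability of $\ch\wh\CV^{(j)}_{j-1}$, and a careful induction along the derived flag is needed to show that this maximality actually forces the full Cauchy rank relations required to apply Lemma \ref{maxIntersections} and identify $\wh\CV$ as a relative Goursat bundle.
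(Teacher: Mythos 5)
Your overall route is the paper's route: identify $\wh{\CV}$ as a relative Goursat bundle and invoke Theorem \ref{relGoursatThm} (with the resolvent/Weber data supplied by hypothesis when $\Delta_{\wh{k}}>1$, and $\Pi^{\wh{k}}$ when $\Delta_{\wh{k}}=1$). Your first and third steps are fine. But your second step --- which you yourself flag as ``the main obstacle'' --- is a genuine gap, and it is exactly the point the paper's proof is built around. Maximality of $\ch\wh{\CV}^{(j)}_{j-1}$ gives $\wh{\chi}^j_{j-1}=\wh{m}_{j-1}-1$ via Lemma \ref{maxIntersections}, but the remaining relation $\wh{\chi}^{j}=2\wh{m}_{j}-\wh{m}_{j+1}-1$ does not follow from the ``rank count'' you sketch: your item (iii) (that a Cauchy section of $\wh{\CV}^{(j)}$ outside the intersection bundle contributes a new direction in $\wh{\CV}^{(j)}/\wh{\CV}^{(j-1)}$) is not a mechanism that produces the factor of $2$ in that formula, and you never actually carry out the ``careful induction'' you say is needed.

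The missing idea is short: since $\ch\wh{\CV}^{(j+1)}_{j}$ is integrable, it satisfies $[\,\ch\wh{\CV}^{(j+1)}_{j},\ch\wh{\CV}^{(j+1)}_{j}\,]\subseteq\ch\wh{\CV}^{(j+1)}_{j}\subseteq\wh{\CV}^{(j)}$, and since it is maximal it has corank $1$ in $\wh{\CV}^{(j)}$; hence it is a Bryant sub-bundle of $\wh{\CV}^{(j)}$, and Proposition \ref{bryant1} (equivalently Proposition \ref{bryant2}) immediately yields $\mathrm{rank}\,\ch\wh{\CV}^{(j)}=2\wh{m}_{j}-\wh{m}_{j+1}-1$. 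The paper also disposes separately of the degenerate case $\ch\wh{\CV}^{(j)}_{j-1}=\ch\wh{\CV}^{(j)}$, where the same corank-$1$ Bryant argument applies. With that substitution your step two closes, the refined derived type of $\wh{\CV}$ is that of a relative Goursat bundle, and the rest of your argument goes through as written.
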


\begin{proof}
As described in Lemma \ref{maxIntersections}, we assume that $\bsy{\G}$ is such that 
$\ch\wh{\CV}^{(j)}_{j-1}$ has maximal rank for each $j$. Since $\ch\wh{\CV}^{(j)}_{j-1}$ is integrable and has corank 1 in 
$\CV^{(j)}$ it is a Bryant sub-bundle and by Proposition \ref{bryant2}, $\ch\wh{\CV}^{(j-1)}$ has rank
$2m_{j-1}-m_j-1$. 

Now it can happen that $\ch\wh{\CV}^{(j)}_{j-1}=\ch\wh{\CV}^{(j)}$. In that case the intersection bundle is automatically integrable and continues to have corank 1 in $\CV^{(j-1)}$. In this case, 
$\ch\wh{\CV}^{(j)}$ also forms a Bryant sub-bundle in $\CV^{(j-1)}$ and therefore
$\text{rank}\,\ch\wh{\CV}^{(j)}=2m_{j-1}-m_j-1$ by Proposition \ref{bryant2}. So far we have considered 
the derived flag for $0\leq j\leq\wh{ k}-1$ and we have shown that $\wh{\CV}$ has the refined derived type of a Goursat bundle.

Therefore we can use the generalized Goursat normal form,  Theorem \ref{GGNF}, to complete the proof. If 
$\Delta_{\wh{k}}>1$ then $\wh{\CV}^{(\wh{k}-1)}$ has an integrable Weber structure and therefore an integrable resolvent bundle $R\big(\wh{\CV}^{(\wh{k}-1)}\big)$. On the other hand if $\Delta_{\wh{k}}=1$ then
we can form the highest order bundle $\Pi^{\wh{k}}$ which is integrable. This data, together with the of the refined derived type of $\wh{\CV}$ being that of a relative Goursat bundle, we conclude that $\wh{\CV}$ is a relative Goursat bundle and by Theorem \ref{relGoursatThm}, the quotient $(M/G, \CV/G)$ is a Goursat bundle. This means that
there is a local diffeomorphism which identifies $\CV/G$ with the contact distribution on a partial prolongation of the jet space $J^1(\B R, \B R^m)$, which is a Brunovsky normal form.
\end{proof}

Theorem \ref{linearizableQuotientExGoursat} is easily adapted to describe the necessary and sufficient conditions for the local diffeomorphism to be
a \stf\ transformation in case $\CV$ is a control system.

\begin{cor}\label{stf_linearizableQuotientExGoursat}
Let $(M, \CV)$ be a control system and $G$ its Lie group of control admissible symmetries with sub-bundle of infinitesimal generators
$\bsy{\G}$. Suppose 
$\wh{\CV}:=\CV\oplus\bsy{\G}\subset TM$ satisfies the hypotheses of Theorem \ref{linearizableQuotientExGoursat} and that
 if $\rho_{\,\wh{k}}=1$, $\ch\wh{\CV}^{(\,\wh{k}-1)}$ has invariant $t$; and if $\rho_{\,\wh{k}}>1$, the resolvent has invariant $t$. Then the quotient $(M/G, \CV/G)$ is
\sfl.
\end{cor}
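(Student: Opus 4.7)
My plan is to view this corollary as a direct combination of the preceding Theorem \ref{linearizableQuotientExGoursat} and the relative static feedback test Theorem \ref{relStatFeedbackLin}. The first step is to apply Theorem \ref{linearizableQuotientExGoursat} to the augmented system $\wh{\CV}=\CV\oplus\bsy{\G}$. Its hypotheses are assumed, so the proof of that theorem actually establishes something slightly stronger than its conclusion: the maximality and integrability of the intersection bundles $\ch\wh{\CV}^{(j)}_{j-1}$, together with integrability of the Weber structure in the top derived system (when $\Delta_{\wh k}>1$), show that $\wh{\CV}$ has the refined derived type of a Goursat bundle and is in fact a relative Goursat bundle in the sense of the definition in Section \ref{relativeGoursatSect}. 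In particular, the signature satisfies $\kappa=\mathrm{decel}(\wh{\CV})$, and Theorem \ref{relGoursatThm} produces a local diffeomorphism $\varphi\colon M/G\to J^{\kappa}$ identifying $\CV/G$ with $\CB_{\kappa}$.

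The second step is to upgrade this diffeomorphism to a static feedback transformation by verifying the hypotheses of Theorem \ref{relStatFeedbackLin}. Because $\Delta_{\wh k}=\rho_{\wh k}$ in the signature of any Goursat bundle (cf.\ Proposition \ref{refined derived type numbers}), the dichotomy ``$\rho_{\wh k}=1$ versus $\rho_{\wh k}>1$'' in the hypothesis of the corollary matches exactly the dichotomy ``$\Delta_{\wh k}=1$ versus $\Delta_{\wh k}>1$'' of Theorem \ref{relStatFeedbackLin}. Saying that $\ch\wh{\CV}^{(\wh k-1)}$ (respectively the resolvent $R(\wh{\CV}^{(\wh k-1)})$) has $t$ as an invariant is precisely the statement $dt\in\mathrm{ann}\,\ch\wh{\CV}^{(\wh k-1)}$ (respectively $dt\in\mathrm{ann}\,R(\wh{\CV}^{(\wh k-1)})$). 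Thus $\wh{\CV}$ is an \stf\ relative Goursat bundle (Definition \ref{StFrelGoursatDefn}), and Theorem \ref{relStatFeedbackLin} guarantees that $\varphi$ may be chosen to be a static feedback transformation. Consequently $(M/G,\CV/G)$ is static feedback equivalent to $\CB_{\kappa}$, i.e.\ \sfl.

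The only point requiring any real care (rather than simply quoting prior results) is ensuring that the ``$t$'' appearing in Theorem \ref{relStatFeedbackLin}'s annihilator conditions is well defined as a function on the quotient, and that the required annihilator conditions on $M$ descend sensibly to $M/G$. This is where control admissibility is used: by Definition \ref{admissibleSymmetries} every element of $G$ fixes $t$, so $dt$ annihilates $\bsy{\G}$ on $M$, and hence $t$ passes to a well defined coordinate on $M/G$ which plays the role of the independence condition for the Brunovsk\'y form $\CB_{\kappa}$. With this observation, the two hypotheses on $t$ invariance translate immediately into the Pfaffian annihilator conditions required by Theorem \ref{relStatFeedbackLin}. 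I expect no genuine obstacle beyond this bookkeeping: the corollary is a specialization of the main relative linearization theorem to the \stf\ setting, once the augmented distribution $\wh{\CV}$ is shown to be a relative Goursat bundle.
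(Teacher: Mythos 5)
Your proposal is correct and follows exactly the paper's own (one-line) argument: apply Theorem \ref{relStatFeedbackLin} to upgrade the local diffeomorphism furnished by Theorem \ref{linearizableQuotientExGoursat} to a static feedback transformation. The extra details you supply (the identification $\Delta_{\wh k}=\rho_{\wh k}$ and the descent of $t$ to $M/G$ via control admissibility) are accurate elaborations of the same route.
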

\begin{proof}
Apply Theorem \ref{relStatFeedbackLin} to show that the local diffeomorphism guaranteed by Theorem \ref{linearizableQuotientExGoursat} can be chosen to be a \stf\ transformation.
\end{proof}

\begin{cor}\label{uniformGoursat}
Let $(M, \CV)$ be a control system and $G$ its Lie group of control admissible symmetries with sub-bundle of infinitesimal generators $\bsy{\G}$. Suppose $G$ is such that 
$\wh{\CV}:=\CV\oplus\bsy{\G}\subset TM$ has the refined derived type of a Goursat bundle with signature of the form
$\langle 0, 0, \ldots,0, \r_{\,\wh{k}}\rangle$, where $\r_{\,\wh{k}}>1$ and $\wh{k}$ is the derived length of $\wh{\CV}$. Then
$\CV/G$ will be \sfl\ if and only if the resolvent exists, is integrable and has invariant $t$. If 
$\r_{\,\wh{k}}=1$ then $\CV/G$ will be \sfl\ if and only if $t$ is an invariant of $\ch\wh{\CV}^{(\,\wh{k}-1)}$.
\end{cor}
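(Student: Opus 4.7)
The plan is to recognize Corollary \ref{uniformGoursat} as a direct specialization of Corollary \ref{stf_linearizableQuotientExGoursat} to the uniform signature $\kappa = \langle 0, 0, \ldots, 0, \rho_{\wh{k}}\rangle$. The first step is to unpack this signature via Definition \ref{decel}: the relations $\rho_j = \Delta_j - \Delta_{j+1}$ for $j < \wh{k}$ together with $\rho_{\wh{k}} = \Delta_{\wh{k}}$ force all intermediate jumps to coincide, so that $\Delta_1 = \cdots = \Delta_{\wh{k}} = \rho_{\wh{k}}$; in other words, the derived flag of $\wh{\CV}$ grows by the same increment at every step.

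Next I would confirm that $\wh{\CV}$ meets the hypotheses of Theorem \ref{linearizableQuotientExGoursat}. By Proposition \ref{refined derived type numbers}, the refined derived type of a Goursat bundle pins down $\chi^j_{j-1} = m_{j-1} - 1$ for $1 \leq j \leq \wh{k}-1$, which by Lemma \ref{maxIntersections} is the maximal permitted rank. Hence each intersection bundle $\ch\wh{\CV}^{(j)}_{j-1}$ is of corank one in $\wh{\CV}^{(j-1)}$, and via Propositions \ref{bryant1}--\ref{bryant2} and Corollary \ref{BryantWeberEquivalence} it is a Bryant sub-bundle giving rise to a Weber structure of rank $\rho_{\wh{k}}$ in $\wh{\CV}^{(j-1)}$. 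Proposition \ref{bryant3} delivers automatic integrability of these intermediate Bryant sub-bundles whenever $\rho_{\wh{k}} \geq 3$, supplying the integrability clause required at each intermediate level.

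The analysis then splits on $\rho_{\wh{k}}$. If $\rho_{\wh{k}} > 1$, then $\Delta_{\wh{k}} > 1$, so the Weber-structure clause of Theorem \ref{linearizableQuotientExGoursat} requires the resolvent $R(\wh{\CV}^{(\wh{k}-1)})$ to exist and be integrable. Invoking clause (2) of Theorem \ref{relStatFeedbackLin} through Corollary \ref{stf_linearizableQuotientExGoursat} then characterizes static feedback linearizability of $\CV/G$ by $dt$ annihilating this resolvent, i.e.\ by $t$ being a first integral of $R(\wh{\CV}^{(\wh{k}-1)})$. If $\rho_{\wh{k}} = 1$, then $\Delta_{\wh{k}} = 1$ and $\wh{\CV}$ fits the classical Goursat framework of Definition \ref{Goursat}; the highest order bundle $\Pi^{\wh{k}}$ is automatically integrable by the generalized Goursat normal form (Theorem \ref{GGNF}), and clause (1) of Theorem \ref{relStatFeedbackLin} yields SFL of $\CV/G$ exactly when $t$ is a first integral of $\ch\wh{\CV}^{(\wh{k}-1)}$.

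The delicate point, and the one I expect to be the main obstacle, is securing integrability of the intermediate Bryant sub-bundles $\ch\wh{\CV}^{(j)}_{j-1}$ from purely numerical hypotheses when $\rho_{\wh{k}} = 2$, since Proposition \ref{bryant3} requires $r \geq 3$. I would handle this by propagating integrability downward from the top-level resolvent via the Weber--Bryant correspondence of Section \ref{WeberAndBryantSect}, or alternatively by reading ``refined derived type of a Goursat bundle'' as implicitly including the integrability clause of Definition \ref{Goursat}. Once this subtlety is resolved, the remainder is bookkeeping between the two branches of Theorem \ref{relStatFeedbackLin} applied to the now-verified relative Goursat structure of $\wh{\CV}$.
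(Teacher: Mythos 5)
Your overall strategy---specializing Corollary \ref{stf_linearizableQuotientExGoursat} and the two branches of Theorem \ref{relStatFeedbackLin} to the uniform signature, after computing $\Delta_1=\cdots=\Delta_{\wh{k}}=\rho_{\wh{k}}$---is the right one, but the step you yourself flag as delicate, the integrability of the intermediate intersection bundles $\ch\wh{\CV}^{(j)}_{j-1}$, is a genuine gap, and neither of your proposed repairs closes it. Reading ``refined derived type of a Goursat bundle'' as implicitly containing the integrability clause of Definition \ref{Goursat} changes the hypothesis of the corollary (its whole point is that only numerical data plus the top-level condition are assumed); and nothing in the paper propagates integrability \emph{downward} along the filtration of Lemma \ref{BryantBundlesTelescope}---a sub-bundle of an integrable bundle need not be integrable. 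There is also a prior difficulty you do not address: before Proposition \ref{bryant3} can even be invoked you must know that $\ch\wh{\CV}^{(j)}_{j-1}$ is a Bryant sub-bundle of $\wh{\CV}^{(j-1)}$ at all, i.e.\ that brackets of its sections land in $\wh{\CV}^{(j-1)}$; in general one only gets $[\ch\wh{\CV}^{(j)}_{j-1},\ch\wh{\CV}^{(j)}_{j-1}]\subset\ch\wh{\CV}^{(j)}$, which need not lie in $\wh{\CV}^{(j-1)}$.

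The fix is much simpler and is what the paper's proof does. The uniform signature forces $\Delta_{j+1}=\Delta_j$ for every $j$, so by Proposition \ref{refined derived type numbers}
\[
\wh{\chi}^{\,j}=2\wh{m}_j-\wh{m}_{j+1}-1=\wh{m}_j-\Delta_{j+1}-1=\wh{m}_j-\Delta_j-1=\wh{m}_{j-1}-1=\wh{\chi}^{\,j}_{j-1}.
\]
Since $\ch\wh{\CV}^{(j)}_{j-1}\subseteq\ch\wh{\CV}^{(j)}$ always holds and the ranks now agree, the two bundles coincide, and Cauchy bundles of totally regular distributions are integrable. Hence every intermediate intersection bundle is integrable for free, with no case distinction on $\rho_{\wh{k}}$ and no appeal to Propositions \ref{bryant1}--\ref{bryant3}. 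The only remaining obstruction is then the top-level one---existence, integrability and $t$-invariance of the resolvent when $\rho_{\wh{k}}>1$, or $t$-invariance of $\ch\wh{\CV}^{(\wh{k}-1)}$ when $\rho_{\wh{k}}=1$---exactly as in Theorem \ref{relStatFeedbackLin}, and with that substitution your two-branch conclusion goes through.
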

\begin{proof}
If $\wh{\CV}$ has the hypothesized signature, it follows that intersection bundles and Cauchy bundles agree and are integrable. Then by the generalized Goursat normal form \cite{Vassiliou2006a, Vassiliou2006b}, the only obstruction to the \stf\ linearizability of $\CV$ is the existence and integrability of the resolvent bundle and that $t$ is one of its first integrals in the case $\rho_{\,\wh{k}}>1$. If $\rho_{\,\wh{k}}=1$ then one can use the classical Goursat normal form together with the hypothesis that $t$ is a first integral of 
$\ch\wh{\CV}^{(\,\wh{k}-1)}$.
\end{proof}

\section{Quotients of Linearizable Control Systems}\label{ReductionOfGoursatBundles}
In this section we study symmetry reductions of feedback linearizable control systems. Theorem \ref{GoursatIFFbryant} characterizes Goursat bundles in terms of the existence of Bryant sub-bundles and this is then used to prove that the quotient of any Goursat bundle is a Goursat bundle. We also prove that the refined derived type of the quotient can be determined from the refined derived type of the augmented distribution $\wh{\CV}=\CV\oplus\bsy{\G}$ (Theorem \ref{RDTofQuotient}).

\begin{lem}\label{BryantBundlesTelescope}
Let $\CV\subset TM$ be a totally regular sub-bundle over manifold $M$ with derived flag
$\CV=\CV^{(0)}\subset\CV^{(1)}\subset \cdots\subset\CV^{(k)}$ of derived length $k$. If
each term in the derived flag contains a Bryant sub-bundle, $\CB^j\subset \CV^{(j)}$, then these form a filtration, $\CB^j\subset\CB^{j+1}$, $0\leq j<k-1$.
\end{lem}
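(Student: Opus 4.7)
The plan is to prove the stronger inclusion $\CV^{(j)}\subset\CB^{j+1}$ for each $0\le j<k-1$, from which the desired filtration $\CB^j\subset\CV^{(j)}\subset\CB^{j+1}$ follows immediately since $\CB^j\subset\CV^{(j)}$ by hypothesis. So it suffices to show that every vector of $\CV^{(j)}$ automatically lies in the Bryant sub-bundle of the next derived bundle.

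The crucial observation is that $\CV^{(j)}$ is automatically isotropic for the structure tensor $\delta_{j+1}\colon\Lambda^2\CV^{(j+1)}\to TM/\CV^{(j+1)}$: indeed $[\CV^{(j)},\CV^{(j)}]\subset\CV^{(j+1)}$ by the very definition of the derived flag, so $\delta_{j+1}$ vanishes on $\Lambda^2\CV^{(j)}$. A routine Jacobi identity argument also shows $\ch\CV^{(j)}\subset\ch\CV^{(j+1)}$, and then by Proposition \ref{bryant1} we have $\ch\CV^{(j+1)}\subset\CB^{j+1}$, so the Cauchy portion of $\CV^{(j)}$ already lies in $\CB^{j+1}$ automatically.

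The heart of the proof would be an argument by contradiction: suppose $Y\in\CV^{(j)}\setminus\CB^{j+1}$. Since $\CB^{j+1}$ has corank one in $\CV^{(j+1)}$, we obtain $\CV^{(j+1)}=\CB^{j+1}\oplus\mathrm{span}\{Y\}$, and a dimension count gives $\mathrm{rank}(\CV^{(j)}\cap\CB^{j+1})=m_j-1$. The Bryant condition $[\CB^{j+1},\CB^{j+1}]\subset\CV^{(j+1)}$ forces $\CV^{(j+2)}=\CV^{(j+1)}+[Y,\CB^{j+1}]$. However, for each $W\in\CV^{(j)}\cap\CB^{j+1}$, the bracket $[Y,W]\in[\CV^{(j)},\CV^{(j)}]\subset\CV^{(j+1)}$ contributes nothing modulo $\CV^{(j+1)}$, so the ``new'' directions in $\CV^{(j+2)}/\CV^{(j+1)}$ must arise from brackets of $Y$ with a rank-$\Delta_{j+1}$ complement of $\CV^{(j)}\cap\CB^{j+1}$ inside $\CB^{j+1}$. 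Appealing to the Weber structure description of $\CB^{j+1}$ provided by Theorem \ref{BryantIffWeber}, in which each singular line contributes exactly one new direction in $\CV^{(j+2)}/\CV^{(j+1)}$, a careful rank comparison combined with the total isotropy of $\CV^{(j)}$ yields the desired contradiction.

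The main obstacle will be making the rank comparison rigorous in the borderline case $\Delta_{j+2}=1$, where Bryant sub-bundles of $\CV^{(j+1)}$ need not be unique (the Weber structure of Theorem \ref{BryantIffWeber} requires $q\ge 2$); here additional structural input, such as the integrability of intersection bundles from Definition \ref{Goursat} when available or a direct bracket-by-bracket selection of compatible complements, is needed to rule out a Bryant choice $\CB^{j+1}$ that fails to contain $\CB^j$.
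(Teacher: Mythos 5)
There is a genuine gap, and it is fatal to the approach: the strengthened inclusion $\CV^{(j)}\subset\CB^{j+1}$ on which your whole plan rests is false. A Bryant sub-bundle of $\CV^{(j+1)}$ has corank one in $\CV^{(j+1)}$ but it does not, in general, contain $\CV^{(j)}$. Take the contact system $\CV=\CB_{\langle 0,2\rangle}$ on $J^2(\B R,\B R^2)$, i.e.\ $\CV=\mathrm{span}\{T,\P{z_2},\P{w_2}\}$ with $T=\P t+z_1\P z+z_2\P{z_1}+w_1\P w+w_2\P{w_1}$. Here $k=2$, the hypothesis of the lemma holds with $\CB^0=\mathrm{span}\{\P{z_2},\P{w_2}\}$ and with the unique Bryant sub-bundle of $\CV^{(1)}$ equal to the resolvent $\CB^1=\mathrm{span}\{\P{z_1},\P{w_1},\P{z_2},\P{w_2}\}$, so that indeed $\CB^0\subset\CB^1$; yet $T\in\CV^{(0)}$ while $T\notin\CB^1$, since $[T,\P{z_1}]=-\P z\notin\CV^{(1)}$. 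This is exactly the content of the decomposition \eqref{BundleWithWeberStructure}, $\CV^{(j)}=\mathrm{span}\{T\}\oplus\CB^j$: the transversal $T$ lies in every derived bundle but in none of the Bryant sub-bundles. Consequently your contradiction argument cannot close: a vector $Y\in\CV^{(j)}\setminus\CB^{j+1}$ \emph{always} exists, and the rank bookkeeping you outline is perfectly consistent rather than contradictory (in the example, $Y=T$ and the two brackets $[T,\P{z_1}]$, $[T,\P{w_1}]$ supply precisely the two new directions of $\CV^{(2)}=TM$). Your preliminary observations (isotropy of $\CV^{(j)}$ for $\delta_{j+1}$, and $\ch\CV^{(j)}\subset\ch\CV^{(j+1)}\subset\CB^{j+1}$) are correct but only place the Cauchy part of $\CV^{(j)}$ inside $\CB^{j+1}$, which is far from the full derived bundle.

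The repair requires starting the contradiction from a putative $B\in\CB^j\setminus\CB^{j+1}$ rather than from a general element of $\CV^{(j)}\setminus\CB^{j+1}$; membership of $B$ in the \emph{lower-level Bryant sub-bundle} is the information your argument never uses. Such a $B$ spans the one-dimensional complement of $\CB^{j+1}$ in $\CV^{(j+1)}$, so the transversal $X$ of $\CB^j$ in $\CV^{(j)}$ expands as $X=a^iB_i+bB$ in a basis $\{B_i\}$ of $\CB^{j+1}$. Since $B,X\in\CV^{(j)}$ forces $[B,X]\in\CV^{(j+1)}$, while the degree-one property of the Weber structure attached to $\CB^{j+1}$ (Theorem \ref{BryantIffWeber}) places the classes of the $[B,B_i]$ in independent lines modulo $\CV^{(j+1)}$, one concludes $a^i=0$, hence $X=bB\in\CB^j$, contradicting the choice of $X$ as a complement to $\CB^j$ in $\CV^{(j)}$. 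Your closing concern about non-uniqueness of $\CB^{j+1}$ when $\Delta_{j+2}=1$ is a reasonable observation about the scope of Theorem \ref{BryantIffWeber}, but it is secondary: the primary issue is that the inclusion you set out to prove is simply not true.
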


\begin{proof}
Suppose there is an element $B\in\CB^j$ such that $B\notin\CB^{j+1}$ for some $j<k-1$. Since 
$B\in\CV^{(j)}\subset\CV^{(j+1)}$, it can be used to complete
$\CB^{j+1}$ to a basis for $\CV^{(j+1)}$. Let vector field $X$ extend a basis for $\CB^j$ to a basis for 
$\CV^{(j)}$ and note that $B\wedge X\neq 0$. 
If $\{B_1, B_2,\ldots,B_{r_{j+1}}\}$ is a basis for $\CB^{j+1}$ then 
since $X\in\CV^{(j)}\subset\CV^{(j+1)}$, there are functions $a^i$ and $b$ on $M$ such that
$X=a^iB_i+b B$ and therefore
$$
[B, X]=[B, a^iB_i]+[B, b B]\equiv a^i[B, B_i]\mod\CV^{(j+1)}.
$$
Now $[B, B_i]\notin\CV^{(j+1)}$ else $\CV^{(j+1)}$ would be integrable and the derived length of $\CV$ would be less than $k$. On the other hand $[B,X]\in\CV^{(j+1)}$ since $B, X\in\CV^{(j)}$. This implies that 
$a^i[B, B_i]=0$. Now there are linearly independent vector fields $Z_1,\ldots, Z_{r_{j+1}}$ such that
$[B, B_i]\in\mathrm{span}\{Z_i\}$ by Theorem \ref{BryantIffWeber} and this implies that $a_i=0$, for
$1\leq i\leq r_{j+1}$. This in turn implies that $X=bB$ and therefore $X\in\CB^j$, contradicting the hypothesis that $X$ completes $\CB^j$ to a basis for $\CV^{(j)}$. This can only be resolved if, in fact, $B\in\CB^{j+1}$.
\end{proof}

\begin{thm}\label{GoursatIFFbryant}
Let $\CV\subset TM$ be a bracket generating sub-bundle on manifold $M$ defining a control system of derived length $k>1$. Then $\CV$ is locally diffeomorphic to a Brunovsk\'y normal form if and only if each derived bundle $\CV^{(i)}$,$0\leq i<k$ contains an integrable Bryant sub-bundle.
\end{thm}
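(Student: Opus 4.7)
The plan is to prove both directions by identifying ``locally diffeomorphic to a Brunovsk\'y normal form'' with ``is a Goursat bundle,'' as justified by Theorem \ref{GGNF}. Thus the task reduces to proving that $\CV$ is a Goursat bundle if and only if each derived bundle $\CV^{(i)}$, $0\le i<k$, contains an integrable Bryant sub-bundle. The forward direction will proceed by exhibiting the canonical integrable sub-bundles of a Goursat bundle as Bryant sub-bundles; the backward direction will use Propositions \ref{bryant1} and \ref{bryant2} plus Lemma \ref{BryantBundlesTelescope} to recover the Goursat structure.

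For $(\Rightarrow)$, assume $\CV$ is a Goursat bundle. For $0\le i\le k-2$, set $\CB^i:=\ch\CV^{(i+1)}_i=\CV^{(i)}\cap\ch\CV^{(i+1)}$. By the definition of Goursat bundle, $\CB^i$ is integrable, and by Proposition \ref{refined derived type numbers} we have $\chi^{i+1}_i=m_i-1$, so $\CB^i$ has corank $1$ in $\CV^{(i)}$. Integrability immediately yields $[\CB^i,\CB^i]\subseteq\CB^i\subseteq\CV^{(i)}$, verifying the Bryant condition. For $i=k-1$, take $\CB^{k-1}:=\Pi^k$ when $\Delta_k=1$ and $\CB^{k-1}:=R(\CV^{(k-1)})$ when $\Delta_k>1$; both are integrable (Definition \ref{Goursat} and the discussion following it) and have corank $1$ in $\CV^{(k-1)}$ (by construction for $\Pi^k$, and by Corollary \ref{BryantWeberEquivalence} for the resolvent), hence are Bryant.

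For $(\Leftarrow)$, assume each $\CV^{(i)}$, $0\le i<k$, contains an integrable Bryant sub-bundle $\CB^i$. Proposition \ref{bryant1} applied at each level gives $\mathrm{rank}\,\ch\CV^{(i)}=2m_i-m_{i+1}-1$, which matches the Goursat Cauchy ranks of Proposition \ref{refined derived type numbers}. Lemma \ref{BryantBundlesTelescope} produces the filtration $\CB^0\subset\CB^1\subset\cdots\subset\CB^{k-1}$. The main work is then to establish, for each $1\le j\le k-1$, that the intersection bundle $\ch\CV^{(j)}_{j-1}=\CV^{(j-1)}\cap\ch\CV^{(j)}$ is integrable with rank $m_{j-1}-1$. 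The strategy is to use the filtration and integrability: $\CB^{j-1}\subseteq\CB^j$ is integrable of corank $1$ in $\CV^{(j-1)}$; combined with the Bryant condition on $\CB^j$ inside $\CV^{(j)}$, one deduces that the integrable bundle of rank $m_{j-1}-1$ realised by $\CB^{j-1}$ is Cauchy for $\CV^{(j)}$, hence coincides with $\ch\CV^{(j)}_{j-1}$ by Lemma \ref{maxIntersections}. Finally, at the top level the integrable Bryant sub-bundle $\CB^{k-1}$ plays the role of $\Pi^k$ (if $\Delta_k=1$) or of the integrable resolvent (if $\Delta_k>1$) via Corollary \ref{BryantWeberEquivalence}, completing the verification of Definition \ref{Goursat}, at which point Theorem \ref{GGNF} concludes the proof.

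The hard part will be Step 3 of the backward direction, namely forcing the abstract integrable Bryant sub-bundle $\CB^{j-1}$ to behave like the canonical intersection bundle $\ch\CV^{(j)}_{j-1}$. The subtlety is that Bryant sub-bundles are unique only when $\Delta_{j+1}\geq 2$ (Theorem \ref{BryantIffWeber}), so when $\Delta_{j+1}=1$ there is genuine non-uniqueness and one must argue that any valid choice of integrable Bryant sub-bundle, combined with the nested filtration and the Engel rank $1$ consequence of Proposition \ref{bryant2}, still produces an integrable Cauchy candidate of the correct maximal rank $m_{j-1}-1$. Once this identification with $\ch\CV^{(j)}_{j-1}$ is accomplished, the refined derived type matches Proposition \ref{refined derived type numbers} and all intersection bundle integrability conditions of Definition \ref{Goursat} hold, so Theorem \ref{GGNF} applies.
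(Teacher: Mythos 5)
Your proposal is correct and follows essentially the same route as the paper: both directions reduce to the Goursat-bundle characterization via Theorem \ref{GGNF}, the forward direction exhibits the intersection bundles $\ch\CV^{(i+1)}_i$ (together with $\Pi^k$ or the resolvent at the top level) as the integrable Bryant sub-bundles, and the backward direction combines Propositions \ref{bryant1}--\ref{bryant2} with the filtration of Lemma \ref{BryantBundlesTelescope} to force $\CB^{j-1}=\ch\CV^{(j)}_{j-1}$, exactly as in the paper's argument. The step you flag as hard is indeed the crux, and the paper resolves it the way you indicate, by showing $[\CB^{j-1},\CV^{(j)}]\subset\CV^{(j)}$ from the nesting and integrability of $\CB^j$ and then invoking the maximal-rank bound of Lemma \ref{maxIntersections}.
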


\begin{proof}
Suppose each derived bundle $\CV^{(i)}$ contains an integrable Bryant sub-bundle $\CB_i$, $0\leq i<k$. 
Let $\t$ be a regular first integral of $\CB^{k-1}$; that is, \newline $d\t(\CB^{k-1})=0$, $d\t\neq 0$. Let $I^{(i)}$ denote the annihilator of $\CV^{(i)}$ for each $0\leq i\leq k$. By Lemma \ref{BryantBundlesTelescope}, the 
$\CB^i$ form a filtration
$\CB^0\subset\CB^1\subset\cdots\subset\CB^{k-1}$ and therefore $d\t(\CB^i)=0$, 
$I^{(i)}(\CB^i)=0$ and $d\t\notin I^{(i)}$, for $0\leq i\leq k-1$. Since $\CB^i$ has corank 1 in $\CV^{(i)}$, we have 
$$
\CB^i=\ker\left(d\t\oplus I^{(i)}\right), \ \ \ \ \ 0\leq i\leq k-1.
$$
By Proposition \ref{bryant2}, $\chi^j:=\text{rank}\,\ch\CV^{(j)}=2m_j-m_{j+1}-1$, 
$m_j=\text{rank}\,\CV^{(j)}$, for $0\leq j\leq k-1$. There are two cases to consider: $\Delta^2_{j+1}=0$
and $\Delta^2_{j+1}\neq 0$. In the former case
$$
\chi^j=2m_j-m_{j+1}-1=m_j-1-\Delta_{j+1}=m_j-1-\Delta_j=m_{j-1}-1,
$$
to which we shall return. In section \ref{WeberAndBryantSect} we showed that if each element $\CV^{(i)}$ in the derived flag of $\CV$ contains a Bryant sub-bundle then it has the form
$$
\CV^{(j-1)}=\mathrm{span}\{T, \xi_1,\ldots,\xi_{\chi^{j-1}}, B_1^{j-1},\ldots,B^{j-1}_{\Delta_j}\}
$$
where $T\t=1$ and $\CB^{j-1}=\mathrm{span}\{\xi_1,\ldots,\xi_{\chi^{j-1}}, B_1^{j-1},\ldots,B^{j-1}_{\Delta_j}\}$
and each $B^{j-1}_l$ has degree 1 in $\CV^{(j-1)}$. That is, 
$$
[T, B_l]\equiv \mathrm{span} \{Z_l\}\mod\CV^{(j-1)},\ \ \ Z_l\in TM/\CV^{(j-1)}, \ \ 1\leq l\leq \Delta_j,
$$
where $Z_1\w Z_2\w\cdots\w Z_{\Delta_j}\neq 0$. Similarly, since $\CB_j\subset\CV^{(j)}$, we have
$$
\begin{aligned}
\CV^{(j)}=&\mathrm{span}\{T, \xi_1,\ldots,\xi_{\chi^{j-1}}, B_1^{j},\ldots,B^{j}_{\Delta_{j+1}}\}\cr
=&\mathrm{span}\{T, \xi_1,\ldots,\xi_{\chi^{j-1}}, B_1^{j-1},\ldots,B^{j-1}_{\Delta_j}\}
 \oplus\mathrm{span}\{[T, B_1^{j-1}], [T, B_2^{j-1}],\ldots, [T, B_{\Delta_j}^{j-1}]\}\cr
=&\CV^{(j-1)}\oplus\mathrm{span}\{[T, B_1^{j-1}], [T, B_2^{j-1}],\ldots, [T, B_{\Delta_j}^{j-1}]\}.
\end{aligned}
$$
Since $T\t\neq 0$ and both $\CB_{j-1}$ and $\CB_j$ are integrable and have corank 1 in $\CV^{(j-1)}$ and 
$\CV^{(j)}$, respectively, we deduce that
$$
\CB^{j-1}=\mathrm{span}\{\xi_1,\ldots,\xi_{\chi^{j-1}}, B_1^{j-1},\ldots,B^{j-1}_{\Delta_j}\}\subset\ch\CV^{(j)}
$$
and hence
$$
\ch\CV^{(j)}_{j-1}:=\CV^{(j-1)}\cap\ch\CV^{(j)}=\CB^{j-1}.
$$
That is, $\ch\CV^{(j)}_{j-1}$ is integrable and in the case $\Delta^2_{j+1}=0$, we deduce that
$\ch\CV^{(j)}_{j-1}=\ch\CV^{(j)}$. In either case, $\Delta^2_{j+1}=0$
and $\Delta^2_{j+1}\neq 0$, we have shown that $\CV$ has the refined derived type of a Goursat bundle and its intersection bundles are all integrable and agree with the Bryant sub-bundles.

Furthermore, if $\Delta_k>1$, we saw in section \ref{WeberAndBryantSect} that $\CB^{k-1}$ agrees with the resolvent bundle $R\big(\CV^{(k-1)}\big)$ which is therefore integrable and has first integral $\t$. We have therefore shown that $\CV$ is a Goursat bundle with independence condition $d\t$. By Theorem \ref{GGNF}, 
$\CV$ is locally diffeomorphic to a partial prolongation of the contact distribution on $J^1(\B R, \B R^m)$ 
(i.e., a Brunovsky normal form), where $m$ is the number of controls. 

Conversely, let $\CV$ is a Goursat bundle with first integral $\t$ of $R\big(\CV^{(k-1)}\big)$ if $\Delta_k>1$
or of $\ch\CV^{(k-1)}$ if $\Delta_k=1$. Then each intersection bundle $\ch\CV^{(j)}_{j-1}$ is integrable and has corank 1 in $\CV^{(j-1)}$, $1\leq j\leq k-1$. If $\Delta_k>1$ then the resolvent bundle exists and is integrable. By the uniqueness of Bryant bundles $\CB^{j-1}=\ch\CV^{(j)}_{j-1}$, $1\leq j\leq k-1$ are the integrable Bryant sub-bundles. Furthermore, the Bryant sub-bundle $\CB^{k-1}$ is given by the resolvent bundle
in the case $\Delta_k>1$ or by the highest order bundle $\Pi^k$ if $\Delta_k=1$.
\end{proof}

A theorem equivalent to Theorem \ref{GoursatIFFbryant} for Goursat bundles with signatures of the special  form
$\k=\langle 0,\ldots,0,\rho_k\rangle$, $\rho_k>0$ is given in \cite{RespondekContact}.

\vskip 3 pt
We can now apply Theorem \ref{GoursatIFFbryant} to prove that  if a control system is \sfl\ then its quotient by its control admissible symmetries will be a \sfl\ control system.

\begin{thm}\label{GoursatQuotientsRGoursat}
Let $(M, \CV, G)$ be a \sfl\ control system on manifold $M$ and $G$ a Lie subgroup of the Lie pseudogroup of automorphisms of $\CV$ acting by control admissible symmetries. Then the quotient control system 
$(M/G, \CV/G)$ is also \sfl.
\end{thm}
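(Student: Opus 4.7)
The plan is to apply the Bryant/Goursat characterization (Theorem \ref{GoursatIFFbryant}) to the augmented bundle $\wh{\CV} = \CV + \bsy{\G}$ and then invoke Theorem \ref{relStatFeedbackLin}. First I would record the basic consequences of the fact that every $X \in \bsy{\G}$ is a symmetry of $\CV$, hence of each $\CV^{(i)}$: one has $[X, \CV^{(i)}] \subseteq \CV^{(i)}$ and $[X, \bsy{\G}] \subseteq \bsy{\G}$, so an induction yields $\wh{\CV}^{(i)} = \CV^{(i)} + \bsy{\G}$ and $\bsy{\G} \subseteq \ch\wh{\CV}^{(i)}$ for every $i$. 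Control admissibility of $G$ additionally gives $dt(\bsy{\G}) = 0$.

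Since $(M, \CV)$ is \sfl, Theorems \ref{Goursat SFL} and \ref{GoursatIFFbryant} supply integrable Bryant sub-bundles $\CB^i \subseteq \CV^{(i)}$ for $0 \leq i < k$: the intersection bundle $\ch\CV^{(i+1)}_i$ for $i < k-1$, and either the highest-order bundle $\Pi^k$ (if $\rho_k = 1$) or the resolvent $R(\CV^{(k-1)})$ (if $\rho_k > 1$) for $i = k-1$. Each $\CB^i$ is canonically determined from $\CV^{(i)}$, so the symmetries preserve it: $[\bsy{\G}, \CB^i] \subseteq \CB^i$. Pulling back the corresponding Bryant sub-bundles of the Brunovsk\'y normal form (all annihilating $dt$) under the identifying static feedback transformation also gives $\CB^i \subseteq \ker dt$ for every $i$.

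I would then set $\wh{\CB}^i := \CB^i + \bsy{\G}$ and verify it is an integrable Bryant sub-bundle of $\wh{\CV}^{(i)}$. Integrability follows from the bracket relations $[\CB^i, \CB^i] \subseteq \CB^i$, $[\bsy{\G}, \bsy{\G}] \subseteq \bsy{\G}$, and $[\CB^i, \bsy{\G}] \subseteq \CB^i$; the Bryant condition $[\wh{\CB}^i, \wh{\CB}^i] \subseteq \wh{\CV}^{(i)}$ is then immediate. Corank $1$ follows because $\wh{\CB}^i \subseteq \ker dt$ while any drift $T \in \CV^{(i)}$ satisfies $Tt = 1$ and completes $\CB^i$ to $\CV^{(i)}$, forcing $T \notin \wh{\CB}^i$ and $\wh{\CV}^{(i)} = \wh{\CB}^i + \langle T \rangle$. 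Running the same computation as in the proof of Theorem \ref{GoursatIFFbryant} then forces $\wh{\CV}$ to have the refined derived type of a relative Goursat bundle of some derived length $\wh{k} \leq k$.

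Finally, Theorem \ref{relStatFeedbackLin} will conclude that $\CV/G$ is \sfl\ provided $dt$ annihilates $\ch\wh{\CV}^{(\wh{k}-1)}$ (when $\Delta_{\wh{k}} = 1$) or $R(\wh{\CV}^{(\wh{k}-1)})$ (when $\Delta_{\wh{k}} > 1$); in either case the relevant bundle sits inside $\wh{\CB}^{\wh{k}-1} = \CB^{\wh{k}-1} + \bsy{\G} \subseteq \ker dt$, and the theorem delivers the conclusion. The main technical obstacle is the equivariance $[\bsy{\G}, \CB^i] \subseteq \CB^i$, which rests on the canonical, symmetry-preserved realization of each $\CB^i$ as either an intersection bundle, the highest-order bundle, or the resolvent; the top level $i = k-1$ in particular requires treating $\rho_k = 1$ and $\rho_k > 1$ separately.
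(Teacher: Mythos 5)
Your proposal is correct and follows essentially the same route as the paper: obtain integrable Bryant sub-bundles of each $\CV^{(i)}$ from Theorem \ref{GoursatIFFbryant}, show they are preserved by $\bsy{\G}$, adjoin $\bsy{\G}$ to get integrable Bryant sub-bundles of each $\wh{\CV}^{(i)}$ annihilating $dt$, and conclude that $\wh{\CV}$ is a \stf\ relative Goursat bundle so that $\CV/G$ is \sfl. The only cosmetic difference is that you justify the equivariance $[\bsy{\G},\CB^i]\subseteq\CB^i$ by the canonical (intersection/resolvent/highest-order) realization of the Bryant sub-bundles, whereas the paper gets it by the direct computation $[\bsy{\G},\CB^i]\equiv\lambda T\mod\CB^i$ with $\lambda=0$ forced by $Tt=1$ and $[\bsy{\G},\CB^i]t=0$; both are sound.
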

\begin{proof}
Since $\CV$ is \sfl\ it is a Goursat bundle with independence condition $dt$ and hence by Theorem \ref{GoursatIFFbryant}, each derived bundle $\CV^{(i)}$ contains an integrable Bryant sub-bundle. Then in the augmented distribution $\wh{\CV}:=\CV\oplus\bsy{\G}$, each Cauchy bundle contains $\bsy{\G}$ and each Bryant sub-bundle contains the Cauchy bundle by Proposition \ref{bryant1}. Furthermore, $G$ preserves any Bryant sub-bundle 
$\CB_j\subset\CV^{(j)}$. For, 
$$
\CV^{(j)}=\mathrm{span}\{T, \xi_1,\ldots,\xi_{\chi^j}, B_1^j,\ldots, B_{\Delta_j}^j\}=\mathrm{span}\{T\}\oplus\CB^j
$$
and $[\bsy{\G}, \CB^j]\equiv \l T\mod\CB^j$. But since $Tt=1$ and $[\bsy{\G}, \CB^j]t=0$ we have $\l=0$.
Therefore $\wh{\CV}^{(j)}$ in the derived flag of $\wh{\CV}$
contains an integrable Bryant sub-bundle for each $j$ such that $\text{rank}\wh\CV^{(j)}\leq \dim M$. Thus, by Theorem
\ref{GoursatIFFbryant}, $\wh{\CV}$ is a Goursat bundle and it follows that it will have independence condition 
$dt$ because $t$ is a first integral of $\bsy{\G}$. Hence $\CV/G$ will be \sfl.
\end{proof}

In Theorem \ref{ell-transverseQuotient-refDerTypeTHM} we made no assumptions about the distribution $\CV$
other than being $G$-invariant and bracket generating. That theorem then gives the precise relation between the refined derived type of the distribution $\CV$ and that of its quotient $\CV/G$. By Theorem \ref{GGNF}, the refined derived type of a distribution is only a partial characterizing invariant of a Brunovsk\'y normal form
(see Example \ref{example1}) and for our purposes it is important that we ultimately characterize the quotients of
feedback linearizable control systems. We settled this question in Theorem \ref{GoursatQuotientsRGoursat}. But in that theorem we did not explain how the refined derived type of the quotient is related to given invariant control system and the action of the symmetry group. This is the purpose of Theorem \ref{RDTofQuotient}.

\begin{thm}\label{RDTofQuotient}
Let $\CV$ be a bracket generating distribution over an $n$-dimensional manifold $M$ of derived length $k>1$ that is invariant under the action of an $r$-dimensional Lie group $G$ with sub-bundle $\bsy{\G}$ of infinitesimal generators.
Suppose the action is $\ell$-{\it transverse for} $\CV$ for some $1\leq\ell\leq k-1$. If $\CV$ is a Goursat bundle then the quotient 
$\CV/G$ has the refined derived type of a Goursat bundle with type numbers given by $\wh{m}_i-r$,  
$\wh{\chi}^i-r,$ and $\wh{\chi}^j_j-1$
for $0\leq i\leq \wh{k}-1$ and $1\leq j \leq \wh{k}-1$ where  $\wh{\cdot}
$  refers to quantities associated with the augmented bundle $\wh{\CV}:=\CV\oplus\bsy{\G}$. Furthermore,
each transverse bundle 
$\bsy{\G}_{j-1}$ is integrable for $1\leq j\leq \wh{k}$. 

\vskip 10 pt

\end{thm}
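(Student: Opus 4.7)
The plan is to derive both the type-number formulas and the integrability claim from a single key observation: that $\bsy{\G}$ lies inside the Cauchy bundle of each $\wh{\CV}^{(i)}$, and, for any $j$ with $\bsy{\G}_{j-1}\neq 0$, also inside $\ch\CV^{(j-1)}$. First I would prove by induction on $i$ that $\bsy{\G}$ consists of infinitesimal symmetries of every derived bundle $\CV^{(i)}$: given $[\bsy{\G},\CV^{(i)}]\subset\CV^{(i)}$, the Jacobi identity yields $[\bsy{\G},[\CV^{(i)},\CV^{(i)}]]\subset\CV^{(i+1)}$, so $[\bsy{\G},\CV^{(i+1)}]\subset\CV^{(i+1)}$. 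Combined with the integrability $[\bsy{\G},\bsy{\G}]\subset\bsy{\G}$, this gives $[\bsy{\G},\wh{\CV}^{(i)}]\subset\wh{\CV}^{(i)}$, hence $\bsy{\G}\subset\ch\wh{\CV}^{(i)}$, and a fortiori $\bsy{\G}\subset\ch\wh{\CV}^{(j)}_{j-1}$ for each $j$.

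Next I would use the projection $\pi:M\to M/G$, whose vertical bundle is $\bsy{\G}$. Because $\bsy{\G}$ lies inside each of $\wh{\CV}^{(i)}$, $\ch\wh{\CV}^{(i)}$, and $\ch\wh{\CV}^{(j)}_{j-1}$, and because $\ker\pi_*=\bsy{\G}$, the general identity $\pi_*(A\cap B)=\pi_*(A)\cap\pi_*(B)$ (valid when $\ker\pi_*\subset A\cap B$) gives
\[
(\CV/G)^{(i)}=\pi_*\wh{\CV}^{(i)},\quad \ch(\CV/G)^{(i)}=\pi_*\ch\wh{\CV}^{(i)},\quad \ch(\CV/G)^{(j)}_{j-1}=\pi_*\ch\wh{\CV}^{(j)}_{j-1},
\]
so subtracting $r=\dim G$ from each rank produces the claimed type numbers $\wh{m}_i-r$, $\wh{\chi}^i-r$, $\wh{\chi}^j_{j-1}-r$. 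To confirm these satisfy the Goursat relations \eqref{linearTypeConstraints}, I would first establish that $\wh{\CV}$ is itself a Goursat bundle: by Theorem~\ref{GoursatIFFbryant} each $\CV^{(j)}$ contains an integrable Bryant sub-bundle $\CB^j$, and the argument used in Theorem~\ref{GoursatQuotientsRGoursat} shows $\CB^j$ is $G$-invariant and enlarges to an integrable Bryant sub-bundle of $\wh{\CV}^{(j)}$, so invoking Theorem~\ref{GoursatIFFbryant} in reverse certifies $\wh{\CV}$ as Goursat. The relations \eqref{linearTypeConstraints} for $\wh{\CV}$ then transfer verbatim to $\CV/G$ under the uniform shift by $r$.

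For the final assertion, integrability of each transverse bundle $\bsy{\G}_{j-1}$ falls out of the same inductive symmetry observation: any $X\in\bsy{\G}_{j-1}=\bsy{\G}\cap\CV^{(j-1)}$ satisfies $[X,\CV^{(j-1)}]\subset\CV^{(j-1)}$ and thus $X\in\ch\CV^{(j-1)}$, giving $\bsy{\G}_{j-1}=\bsy{\G}\cap\ch\CV^{(j-1)}$. For sections $X,Y\in\bsy{\G}_{j-1}$, integrability of $\bsy{\G}$ yields $[X,Y]\in\bsy{\G}$ and integrability of $\ch\CV^{(j-1)}$ yields $[X,Y]\in\ch\CV^{(j-1)}$, so $[X,Y]\in\bsy{\G}_{j-1}$. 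The main obstacle I anticipate is not any single ingredient --- each is short --- but the careful simultaneous verification that the derived, Cauchy, and intersection flags of $\CV/G$ really arise as $\pi_*$-images of the corresponding flags of $\wh{\CV}$ at every stratum; this is precisely what makes the uniform subtraction of $r$ globally consistent with the Goursat type-number relations.
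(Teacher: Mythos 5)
Your argument splits into two halves, and they fare differently.

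The type–number half is workable and takes a genuinely different route from the paper. You lift the integrable Bryant sub-bundles $\CB^j\subset\CV^{(j)}$ (Theorem \ref{GoursatIFFbryant}) to $\wh{\CV}^{(j)}$ and then push down by $\pi_*$, whereas the paper runs a rank count: it feeds the Goursat identity $\chi^j_{j-1}=m_{j-1}-1$ into the general formulas of Theorem \ref{ell-transverseQuotient-refDerTypeTHM} and squeezes against the bound of Lemma \ref{maxIntersections} to force $q'_j=0$ and $p'_j=r_{j-1}$, whence $\wh{\chi}^j_{j-1}=\wh{m}_{j-1}-1$. What the paper's squeeze buys, and what your version silently assumes, is precisely that $\bsy{\G}_j\subset\CB^j$: without this, $\CB^j+\bsy{\G}$ would contain $\CB^j+\bsy{\G}_j=\CV^{(j)}$ and hence equal all of $\wh{\CV}^{(j)}$, so it would not be a corank-one (Bryant) sub-bundle. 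You should either prove this containment (it is equivalent to $p'_{j+1}=r_j$) or adopt the paper's rank argument; note also that the $G$-invariance of $\CB^j$ in Theorem \ref{GoursatQuotientsRGoursat} is argued using $Tt=1$, which is specific to the static-feedback setting, so here you should instead invoke uniqueness of the Bryant sub-bundle.

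The integrability half has a genuine gap. You assert that any $X\in\bsy{\G}_{j-1}=\bsy{\G}\cap\CV^{(j-1)}$ satisfies $[X,\CV^{(j-1)}]\subset\CV^{(j-1)}$ and hence lies in $\ch\CV^{(j-1)}$. This conflates the finite-dimensional Lie algebra $\bsy{\G}_{\text{alg}}$ of infinitesimal generators with the $C^\infty(M)$-module of sections of the sub-bundle $\bsy{\G}$. A local frame for $\bsy{\G}_{j-1}$ consists of combinations $X=f^aX_a$ with $X_a\in\bsy{\G}_{\text{alg}}$, and for $Y\in\CV^{(j-1)}$ one gets $[X,Y]=f^a[X_a,Y]-(Yf^a)X_a$; the first term lies in $\CV^{(j-1)}$ but the correction $(Yf^a)X_a$ lies only in $\bsy{\G}$, not in $\CV^{(j-1)}$. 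So $X$ need not be a Cauchy vector of $\CV^{(j-1)}$, and $\bsy{\G}_{j-1}=\bsy{\G}\cap\ch\CV^{(j-1)}$ does not follow. Decisively: your argument nowhere uses the hypothesis that $\CV$ is Goursat, so if it were correct it would prove every transverse bundle of every invariant distribution integrable --- contradicting the paper's explicit remark after Definition \ref{transversalityDefn} that $\bsy{\G}_i$ "need not even be involutive." The paper's proof gets integrability only by first establishing $p'_j=r_{j-1}$ from the Goursat hypothesis, which places $\bsy{\G}_{j-1}$ inside the integrable intersection bundle $\ch\wh{\CV}^{(j)}_{j-1}$ (equivalently $\ch\CV^{(j)}_{j-1}$), and then intersecting with the integrable $\bsy{\G}$. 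You need that step.
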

\begin{proof}
Since $\CV$ is a Goursat bundle then $\chi^j_{j-1}=m_{j-1}-1$ and by Theorem 
\ref{ell-transverseQuotient-refDerTypeTHM}, 
\begin{equation}\label{intersectionBundleEquality}
\wh{\chi}^j_{j-1}={\chi}^j_{j-1}+r-p'_j+q'_j=m_{j-1}-1+r-p'_j+q'_j.
\end{equation}
In general, we have by Proposition \ref{maxIntersections} that $\wh{\chi}^j_{j-1}\leq \wh{m}_{j-1}-1$ for
$1\leq j\leq \wh{k}-1$, and with $j<k$, $\CV^{(j)}$ is not integrable. Thus
$
m_{j-1}+r-p'_j+q'_j\leq\wh{m}_{j-1}
$.
However, by Theorem \ref{ell-transverseQuotient-refDerTypeTHM}, $\wh{m}_{j-1}=m_{j-1}+r-r_{j-1}$ so that 
$m_{j-1}+r-p'_j+q'_j\leq m_{j-1}+r-r_{j-1}$;
therefore, $r_{j-1}-p'_j+q'_j\leq 0$. But $q'_j\geq 0$ and by the definition of $p'_j$ and $r_{j-1}$, we deduce that $r_{j-1}\geq p'_j$ and thus, $r_{j-1}-p'_j\leq -q'_j$. This implies $q'_j\leq 0$ and therefore $q'_j=0$ and
$r_{j-1}=p'_j$. Substituting into \eqref{intersectionBundleEquality} we easily arrive at
$\wh{\chi}^j_{j-1}=\wh{m}_{j-1}-1$, which is the rank of the intersection bundle of a Goursat bundle, as we wanted to show.

To establish the relevant formula $\wh{\chi}^i=2\wh{m}_i-\wh{m}_{i+1}-1$, we note that each term in the derived flag of $\wh{\CV}=\CV\oplus\bsy{\G}$ contains an integrable Bryant sub-bundle by Theorem \ref{GoursatIFFbryant} and hence the formula
follows from Proposition \ref{bryant2}.

At this stage, we have proven that the augmented bundle $\wh{\CV}$ has the refined derived type numbers of a relative Goursat bundle. In order to pass to the refined derived type numbers of $\CV/G$ we apply Theorem 4.2 of \cite{DTVa} to $\wh{\CV}$. That is, in the reading of Theorem 4.2 in \cite{DTVa}, replace $\CV$ with $\wh{\CV}$ and replace $\bar{\CV}$ with $\CV/G$. Hence $\CV/G$ has the refined derived type numbers of a Goursat bundle. 

To prove that the transverse bundles $\bsy{\G}_i$ are integrable, we know that 
$\ch\wh{\CV}^{(j)}_{j-1}$ is integrable for each $1\leq j\leq\wh{k}-1$ and since $p'_j=r_{j-1}$,
$\bsy{\G}_{j-1}\subset\ch\CV^{(j-1)}\subset\ch\CV^{(j)}$ and hence 
$\bsy{\G}_{j-1}\subset\ch\CV^{(j)}_{j-1}$. Since $\ch\CV^{(j)}_{j-1}$ is integrable, we have
$$
[\bsy{\G}_{j-1}, \bsy{\G}_{j-1}]=\bsy{\G}_{j-1}^{(1)}\subset\ch\CV^{(j)}_{j-1}\subset\CV^{(j-1)}
$$
and as $\bsy{\G}$ is integrable, we have $\bsy{\G}_{j-1}^{(1)}\subset\bsy{\G}$. We deduce that
$\bsy{\G}_{j-1}^{(1)}\subset\bsy{\G}\cap\CV^{(j-1)}=\bsy{\G}_{j-1}$, which concludes the proof.
\end{proof}

\begin{thm}
Let $\CV$ be a Goursat bundle of derived length $k$ and signature $\k=\langle \r_1, \r_2, \ldots, \r_k\rangle$
on manifold $M$ of dimension $P$. Furthermore, let $\bsy{\G}$ be the bundle generated by the infinitesimal generators of the smooth, transitive, regular action of any local Lie group of symmetries $G$ of dimension
$r<P-\sum_{i=1}^k\r_i$, and assume that $\bsy{\G}$ is strongly transverse to $\CV$. Moreover, the signature $\bar{\kappa}$ of $\CV/G$ is given by
$$
\bar{\kappa}=\left\langle \nabla^2_2-\Delta^2_2,\ 
\nabla^2_3-\Delta^2_3,\ldots, \nabla^2_{\wh{k}}-\Delta^2_{\wh{k}}, 
\Delta_{\wh{k}}-\nabla_{\wh{k}}\right\rangle.
$$
\end{thm}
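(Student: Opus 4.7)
The plan is to reduce the signature computation for $\CV/G$ to the signature computation for the augmented bundle $\wh{\CV}=\CV\oplus\bsy{\G}$, for which Corollary \ref{decelVhat} already supplies the desired formula. The key observation is that deceleration is determined entirely by the rank sequence $m_0,m_1,\ldots,m_{\wh{k}}$ of the derived flag, and that the projection $d\pi: TM \to T(M/G)$ acts on these ranks by subtracting the constant $r=\dim G$, leaving first and second differences unchanged.

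First I verify the setup. Strong transversality of $\bsy{\G}$ relative to $\CV$ is precisely $1$-transversality in the sense of Definition \ref{transversalityDefn}, so $\ell\geq 1$; and $\ell\leq k-1$ is automatic because $\CV^{(k)}=TM$ trivially contains the nonzero $\bsy{\G}$. The dimension bound $r<P-\sum\r_i$ keeps $\wh{\CV}$ from filling up $TM$ prematurely and guarantees that the hypotheses of both Theorem \ref{RDTofQuotient} and Corollary \ref{decelVhat} apply.

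Next I push the ranks down. Since $\bsy{\G}\subset\ker d\pi$ and each $\wh{\CV}^{(i)}=\CV^{(i)}+\bsy{\G}$ contains $\bsy{\G}$, the image $d\pi(\wh{\CV}^{(i)})=d\pi(\CV^{(i)})=(\CV/G)^{(i)}$ has rank $\bar{m}_i=\wh{m}_i-r$. In particular $\bar{m}_{\wh{k}}=P-r=\dim(M/G)$, so $\CV/G$ has derived length exactly $\wh{k}$. Taking successive differences gives $\bar{\Delta}_i=\wh{\Delta}_i$ and $\bar{\Delta}^2_i=\wh{\Delta}^2_i$ for $2\leq i\leq \wh{k}$, and by Definition \ref{decel} this forces $\mathrm{decel}(\CV/G)=\mathrm{decel}(\wh{\CV})$. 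Since Theorem \ref{RDTofQuotient} certifies that $\CV/G$ is Goursat, its deceleration is its signature $\bar{\kappa}$; invoking Corollary \ref{decelVhat} then yields the displayed formula.

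The main obstacle is the rank-subtraction identity $\bar{m}_i=\wh{m}_i-r$: one must check that $d\pi$ collapses exactly $r$ dimensions in each derived bundle, no more and no less. This follows from the containment $\bsy{\G}\subset\wh{\CV}^{(i)}$ for every $i\geq 0$ together with $\ker d\pi=\bsy{\G}$, so no additional cancellation can occur. With this point settled, the rest of the argument is purely arithmetic.
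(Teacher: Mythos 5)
Your argument is correct for the displayed formula and it lands in the same place as the paper's proof --- Corollary \ref{decelVhat} --- but the route through the middle is genuinely different. The paper devotes essentially all of its effort to showing that $\CV/G$ is an honest Goursat bundle: it transports the integrable Bryant sub-bundles of the derived flag of $\CV$ into $\wh{\CV}=\CV\oplus\bsy{\G}$ (via Theorem \ref{GoursatQuotientsRGoursat} and uniqueness of Bryant sub-bundles, with the top level handled by $\Pi^{\wh{k}}+\bsy{\G}$ or $R\big(\CV^{(\wh{k}-1)}\big)+\bsy{\G}$ according as $\rho_k=1$ or $\rho_k>1$), concludes that $\wh{\CV}$ is a relative Goursat bundle and $\CV/G$ a Goursat bundle, and only then reads off the signature from Corollary \ref{decelVhat}. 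You instead observe that the deceleration depends only on the rank sequence of the derived flag, prove $\bar{m}_i=\wh{m}_i-r$ directly from $\bsy{\G}\subset\ker d\pi$ together with $\bsy{\G}\subset\wh{\CV}^{(i)}$, and conclude $\mathrm{decel}(\CV/G)=\mathrm{decel}(\wh{\CV})$ by taking first and second differences. Since the paper defines the signature of any system to be its deceleration, this is all the displayed formula requires; your route is the more economical one, and it has the virtue of making explicit the identity $\mathrm{decel}(\CV/G)=\mathrm{decel}(\wh{\CV})$, which the paper leaves implicit in its appeal to Theorem \ref{relGoursatThm}.

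One caution. You write that Theorem \ref{RDTofQuotient} ``certifies that $\CV/G$ is Goursat,'' but that theorem only certifies that $\CV/G$ has the \emph{refined derived type} of a Goursat bundle; the integrability of the intersection bundles and of the resolvent or highest-order bundle is an additional requirement --- Example \ref{example1} shows a distribution with the right refined derived type that is nevertheless not Goursat. If the theorem is read, as the word ``Moreover'' in its statement suggests, as also asserting that $\CV/G$ \emph{is} a Goursat bundle, then your proof needs the paper's Bryant sub-bundle argument (Theorem \ref{GoursatQuotientsRGoursat}) to supply that integrability. For the signature formula alone, which is all the statement explicitly claims, your rank bookkeeping suffices.
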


\vskip 5 pt
\begin{proof}
This follows from Theorem \ref{GoursatQuotientsRGoursat} and the relationship between $\wh{\CV}$
and $\CV/G$ given by Theorem \ref{relGoursatThm}. For $\wh{k}<k$, $\bsy{\G}+\ch\CV^{(\,\wh{k}\,)}_{\wh{k}-1}$ is an integrable Bryant sub-bundle by Theorem \ref{GoursatQuotientsRGoursat}. In case $\wh{k}=k$ then the same reasoning applies by uniqueness of Bryant subbundles. The only difference is that we have either $\Pi^{\wh{k}}+\bsy{\Gamma}$ or 
$R\left(\CV^{(\,\wh{k}-1\,)}\right)+\bsy{\Gamma}$ as the Bryant subbundle when $\rho_k=1$ or $\rho_k>1$, respectively. Since each Bryant sub-bundle uniquely defines an integrable Weber structure, the resolvent bundle of $\wh{\CV}:=\CV\oplus\bsy{\G}$ is integrable and we can conclude that $\wh{\CV}$ is a relative Goursat bundle and therefore $\CV/G$ is a Goursat bundle. Note that the dimension restriction on $G$ is necessary to make the quotient well-defined. The signature of $\CV/G$ in the theorem statement then follows from Corollary \ref{decelVhat}.
\end{proof}

\section{Generalized S-G-S Test for Linearizability}\label{GSStestSection}
The G-S algorithm \cite{GS92} is a procedure for determining a \stf\ transformation that transforms a fully nonlinear control system \eqref{controlSystem} to some Brunovsk\'y normal form. There is an associated extremely simple test \cite{GS92, sluisPhD} for checking the possible {\it existence} of such a transformation for a given autonomous control system. However the test works in exactly the same way for time-varying control systems; see \cite{DaSilvaGS}.  It is expressed in terms of the Pfaffian system 
$I$ defining the control system as in \eqref{controlPfaffian}. Note that we denote this Pfaffian system by $I$ instead of $\bsy{\o}$ as in \eqref{controlPfaffian}. The notion of derived system is dual to the case of vector fields. The derived bundle $I^{(1)}$ of $I$ is defined by
$$
I^{(1)}=\{\o\in I~|~d\o\equiv 0\mod I\};
$$ 
and we iterate as before: $I^{(j)}=\left(I^{(j-1)}\right)^{(1)}$.

Let $(M, I)$ be the Pfaffian system on manifold $M$ that represents the control system and let $t$ denote the time variable as in \eqref{controlPfaffian}. Let
$I^{(j)}$ denote the $j^{\text{th}}$ element in the derived flag \eqref{DerFlagI} of $I$, where the derived length of $I$ is 
$k>1$,
\begin{equation}\label{DerFlagI}
\{0\}=I^{(k)}\subset I^{(k-1)}\subset\cdots\subset I^{(1)}\subset I.
\end{equation}
\begin{thm}[G-S-S test, \cite{GS92}, \cite{sluisPhD}]
Let  $I$ be the Pfaffian system representation \eqref{controlPfaffian} of any smooth control system \eqref{controlSystem}. Then
\eqref{controlSystem} is static feedback linearizable if and only if 
$$
dI^{(j)}\equiv 0\!\!\!\!\!\!\mod \big(I^{(j)}\oplus\mathrm{span}\{dt\}\big),\ \ \ 0\leq j\leq k.
$$
\end{thm}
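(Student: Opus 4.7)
The plan is to pivot on the observation that, for each $j$, the condition
\[
dI^{(j)}\equiv 0\!\!\!\!\!\!\mod \big(I^{(j)}\oplus\mathrm{span}\{dt\}\big)
\]
is nothing other than the Frobenius integrability of the enlarged Pfaffian system $J^{(j)}:=I^{(j)}+\mathrm{span}\{dt\}$, or equivalently the integrability of its annihilator $\CV^{(j)}_t:=\CV^{(j)}\cap\ker dt$. Since $d(dt)=0$ holds trivially and $J^{(j)}$ is generated by $I^{(j)}$ together with $dt$, the displayed condition is exactly $dJ^{(j)}\equiv 0\!\!\!\!\mod J^{(j)}$, which under the standing total regularity hypothesis is Frobenius. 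With this translation in hand, the rest of the theorem reduces to the Bryant/Goursat machinery already established, namely Theorem \ref{GoursatIFFbryant}, Theorem \ref{BryantIffWeber}, and Theorem \ref{Goursat SFL}.

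For the ``if'' direction, assume the G-S-S condition at every level. The drift vector field $\P t+f^i\P {x^i}$ lies in $\CV\subset\CV^{(j)}$ and pairs nontrivially with $dt$, so each $\CV^{(j)}_t$ has corank exactly $1$ in $\CV^{(j)}$. Being integrable, it satisfies $[\CV^{(j)}_t,\CV^{(j)}_t]\subset\CV^{(j)}_t\subset\CV^{(j)}$, so it is a corank-$1$ integrable Bryant sub-bundle of $\CV^{(j)}$ for every $j<k$. Theorem \ref{GoursatIFFbryant} then identifies $\CV$ as a Goursat bundle, hence (by Theorem \ref{GGNF}) locally diffeomorphic to a Brunovsk\'y normal form. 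To upgrade to a static feedback transformation I would invoke Theorem \ref{Goursat SFL}. When $\r_k>1$, the uniqueness clause of Theorem \ref{BryantIffWeber} forces the Bryant sub-bundle of $\CV^{(k-1)}$ to coincide with the resolvent $R(\CV^{(k-1)})$; when $\r_k=1$ the analogous unique Bryant sub-bundle is $\Pi^k$, which by Proposition \ref{bryant1} contains $\ch\CV^{(k-1)}$. In both cases the top Bryant sub-bundle equals $\CV^{(k-1)}_t\subset\ker dt$, so the $dt$-annihilation hypothesis of Theorem \ref{Goursat SFL} is automatic and $\CV$ is \sfl.

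For the ``only if'' direction, by Definition \ref{SFT def} any SFT preserves $t$ exactly, hence preserves $dt$; it also commutes with the derived flag of $I$. Thus it suffices to verify the condition for a Brunovsk\'y model $\bsy{\b}^\k$. There the generators $\theta^a_l=dz^a_l-z^a_{l+1}\,dt$ satisfy
\[
d\theta^a_l=-dz^a_{l+1}\wedge dt=-\theta^a_{l+1}\wedge dt,
\]
which lies in the algebraic ideal generated by $\mathrm{span}\{dt\}$ alone and therefore in the ideal generated by $I^{(j)}\oplus\mathrm{span}\{dt\}$ for every $j$.

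The step I expect to be the main obstacle is the case $\Delta_k=1$, where Theorem \ref{Goursat SFL} phrases the $dt$-hypothesis in terms of $\ch\CV^{(k-1)}$ rather than the Bryant sub-bundle $\Pi^k$; to close this gap one must use Proposition \ref{bryant1} to see that $\ch\CV^{(k-1)}$ lies inside $\Pi^k=\CV^{(k-1)}_t$, and so inherits annihilation by $dt$. A smaller technical point is checking that $\CV^{(j)}_t$ has constant rank at every level, so that Frobenius and the uniqueness of Bryant sub-bundles apply uniformly, but this is subsumed by the total regularity assumption already in force.
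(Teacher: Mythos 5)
Your proof is correct and follows essentially the same route as the paper: the paper establishes the test via Theorem \ref{generalizedGStest} and Corollary \ref{generalizedGStest1}, which likewise translate the congruence into integrability of $I^{(j)}\oplus\mathrm{span}\{d\t\}$, identify the kernels as integrable corank-$1$ Bryant sub-bundles, and invoke Theorem \ref{GoursatIFFbryant}, specializing to $\t=t$. If anything, your treatment of the upgrade from a general linearizing diffeomorphism to a genuine static feedback transformation (via Proposition \ref{bryant1} and the uniqueness clause of Theorem \ref{BryantIffWeber} to verify the $dt$-hypotheses of Theorem \ref{Goursat SFL}) is spelled out more explicitly than the paper's terse final corollary, and your direct verification of the necessity on the Brunovsk\'y model is a harmless variant of the paper's appeal to the converse of Theorem \ref{GoursatIFFbryant}.
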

Based on our previous discussion and the generalized Goursat normal form, in this section we give a proof of this and at the same time we generalize it to give a test for the existence of {\it some} local diffeomorphism $\vf: J^\k\to M$ (not necessarily a \stf\ transformation) such that $\vf^*I$ is the contact sub-bundle 
$\bsy{\b}^\k\subset TJ^\k$ on the jet space $J^\k$.  This test can be applied to {\it any} Pfaffian system, not only to control systems. But when applied to a control system it provides a simple test for the orbital feedback linearizability of any smooth control system when a time scale $\t$ is given.

We shall first require the following result.

\begin{thm}\label{generalizedGStest}
Let
$(M, I)$ be the Pfaffian system on manifold $M$ of derived length $k>1$ with
derived flag \eqref{DerFlagI}. Suppose $\CV^{(k-1)}:=\ker I^{(k-1)}$ has an integrable Bryant sub-bundle with regular first integral $\t$ on $M$.
Then $I$ defines a Goursat bundle with independence condition $d\t\neq 0$ for its integral submanifolds, if and only if
$$
I^{(j)}\oplus\mathrm{span}\{d\t\}
$$
is integrable for $0\leq j\leq k-1$. 
\end{thm}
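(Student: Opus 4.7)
The plan is to translate the Pfaffian integrability hypothesis into a distribution-level statement and then invoke Theorem \ref{GoursatIFFbryant}. For each $0\leq j\leq k-1$, define $\CB^j:=\ker\bigl(I^{(j)}\oplus\mathrm{span}\{d\t\}\bigr)\subset\CV^{(j)}$. Since $\t$ is a regular first integral of an integrable Bryant sub-bundle of $\CV^{(k-1)}$, the differential $d\t$ lies outside $I^{(k-1)}$ and hence outside each $I^{(j)}$ for $j\leq k-1$; consequently $\CB^j$ is corank one in $\CV^{(j)}$. By Frobenius, $I^{(j)}\oplus\mathrm{span}\{d\t\}$ is integrable as a Pfaffian system if and only if $\CB^j$ is integrable as a distribution. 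Moreover, integrability of $\CB^j$ immediately gives $[\CB^j,\CB^j]\subset\CB^j\subset\CV^{(j)}$, so $\CB^j$ automatically qualifies as a Bryant sub-bundle. Conversely, any integrable Bryant sub-bundle of $\CV^{(j)}$ of which $\t$ is a first integral must agree with $\CB^j$ by matching annihilators.

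For the forward direction, assume $I$ defines a Goursat bundle with independence condition $d\t$. By Theorem \ref{GoursatIFFbryant}, each derived bundle $\CV^{(j)}$, $0\leq j\leq k-1$, contains a unique integrable Bryant sub-bundle; denote it by $\CB^j_{\text{Gt}}$. At the top level $\CB^{k-1}_{\text{Gt}}$ equals $\Pi^k$ (when $\D_k=1$) or $R(\CV^{(k-1)})$ (when $\D_k>1$), and $\t$ is its first integral by the definition of independence condition. By Lemma \ref{BryantBundlesTelescope}, these Bryant sub-bundles form a filtration $\CB^0_{\text{Gt}}\subset\CB^1_{\text{Gt}}\subset\cdots\subset\CB^{k-1}_{\text{Gt}}$, so $\t$ is a first integral of every $\CB^j_{\text{Gt}}$. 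Uniqueness then identifies $\CB^j_{\text{Gt}}$ with the $\CB^j$ of Step~1, and the assumed integrability of each $\CB^j_{\text{Gt}}$ translates into integrability of every $I^{(j)}\oplus\mathrm{span}\{d\t\}$.

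For the reverse direction, suppose $I^{(j)}\oplus\mathrm{span}\{d\t\}$ is integrable for $0\leq j\leq k-1$. By the setup of Step~1, each $\CB^j$ is an integrable Bryant sub-bundle of $\CV^{(j)}$. Invoking Theorem \ref{GoursatIFFbryant} in the other direction, $\CV=\ker I$ is locally diffeomorphic to a Brunovsk\'y normal form, i.e.\ $I$ defines a Goursat bundle. Finally, $\t$ is a first integral of $\CB^{k-1}$, which by uniqueness (Theorem \ref{BryantIffWeber} and Corollary \ref{BryantWeberEquivalence}) coincides with $\Pi^k$ or $R(\CV^{(k-1)})$ according as $\D_k=1$ or $\D_k>1$. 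In either case $d\t$ is the independence condition, as required.

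The main subtlety, and the step where the standing hypothesis does real work, is ensuring that the distributional object $\CB^{k-1}$ constructed from the Pfaffian side coincides with the canonical Bryant sub-bundle at the top. The hypothesis that $\CV^{(k-1)}$ already possesses an integrable Bryant sub-bundle with first integral $\t$ guarantees both that $d\t\notin I^{(k-1)}$ (so the direct sums $I^{(j)}\oplus\mathrm{span}\{d\t\}$ have the correct rank) and that the canonical Bryant sub-bundle exists; uniqueness of corank-one integrable subdistributions with a prescribed first integral then forces the identifications used above, which in turn license the clean translation between Pfaffian and distributional integrability at every level of the flag.
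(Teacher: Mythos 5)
Your proof is correct and follows essentially the same route as the paper's: both directions hinge on identifying $\CB^j=\ker\bigl(I^{(j)}\oplus\mathrm{span}\{d\t\}\bigr)$ as a corank-one integrable (hence Bryant) sub-bundle of $\CV^{(j)}$ and then invoking Theorem \ref{GoursatIFFbryant}; you simply supply more detail than the paper does (uniqueness of Bryant sub-bundles, Lemma \ref{BryantBundlesTelescope} for propagating $\t$ down the filtration). One small caution: your claim that $d\t\notin I^{(k-1)}$ ``hence'' $d\t\notin I^{(j)}$ for $j\leq k-1$ has the containment backwards, since $I^{(k-1)}\subseteq I^{(j)}$; the correct justification is that $d\t\in I^{(j)}$ together with integrability of $I^{(j)}\oplus\mathrm{span}\{d\t\}=I^{(j)}$ would force $I^{(k)}=I^{(j)}\neq 0$ (or, in the forward direction, that the transversal vector field $T$ with $T\t=1$ lies in $\CV\subseteq\CV^{(j)}$).
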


\begin{proof}
Let $\CB^{k-1}\subset\CV^{(k-1)}$ be the integrable Bryant sub-bundle satisfying $d\t(\CB^{k-1})=0$, 
and suppose $I^{(i)}\oplus\mathrm{span}\{d\t\}$ is integrable, $0\leq i\leq k-1$. Then 
$\CB^i=\ker\,\left(I^{(i)}\oplus\mathrm{span}\{d\t\}\right)$
is a corank 1 integrable distribution in $\CV^{(i)}$ and therefore a Bryant sub-bundle in 
$\CV^{(i)}$. By Theorem \ref{GoursatIFFbryant}, $\CV$ is a Goursat bundle with independence condition
$d\t\neq 0$.
Conversely, if $\CV$ is a Goursat bundle with independence condition $d\t$ then by Theorem \ref{GoursatIFFbryant}, each
Bryant sub-bundle $\CB^i\subset \CV^{(i)}$, $0\leq i\leq k-1$ exists and is integrable. Hence, 
$\mathrm{ann}\CB^i:=d\t\oplus I^{(i)}$ is integrable for each such $i$.
\end{proof}
\noindent Note that if $\rho_k>1$ then $\CB^{k-1}$ agrees with the integrable resolvent of $\CV^{(k-1)}$ and if $\rho_k=1$ it agrees with the highest order bundle $\Pi^k$. 

\begin{cor}[Generalized S-G-S test]\label{generalizedGStest1}
Let
$(M, I)$ be the Pfaffian system on manifold $M$ of derived length $k>1$ such that $I^{(k)}=\{0\}$. Suppose there is a smooth, real-valued function $\t$ on $M$, with $d\t\neq 0$ such that
$I^{(j)}+\mathrm{span}\{d\t\}$ is integrable for each $j\geq 0$. Then $I$ is diffeomorphic to a Brunovsky normal form with independence condition $d\t$.
\end{cor}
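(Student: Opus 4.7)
The plan is to reduce this corollary directly to Theorem \ref{generalizedGStest} together with the Generalized Goursat Normal Form (Theorem \ref{GGNF}). The only content beyond a bookkeeping exercise is checking that the integrability hypothesis on $I^{(j)}+\mathrm{span}\{d\tau\}$ automatically produces the integrable Bryant sub-bundle of $\mathcal{V}^{(k-1)}$ with first integral $\tau$ that Theorem \ref{generalizedGStest} demands.

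First I would show that $d\tau\notin I^{(j)}$ for every $0\le j\le k-1$. Suppose, for contradiction, $d\tau\in I^{(j_0)}$ for some $j_0$. Since $d\tau$ is exact, $d(d\tau)=0\equiv 0\!\mod I^{(j_0)}$, so $d\tau\in I^{(j_0+1)}$; iterating pushes $d\tau$ into $I^{(k)}=\{0\}$, contradicting $d\tau\neq 0$. Consequently $I^{(j)}\oplus\mathrm{span}\{d\tau\}$ is a genuine direct sum at every level, and its annihilator
\[
\mathcal{B}^j:=\ker\!\left(I^{(j)}\oplus\mathrm{span}\{d\tau\}\right)
\]
is a corank-$1$ sub-bundle of $\mathcal{V}^{(j)}:=\ker I^{(j)}$ on which $\tau$ is a first integral. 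The hypothesis that $I^{(j)}+\mathrm{span}\{d\tau\}$ is integrable says exactly that $\mathcal{B}^j$ is involutive, and then $[\mathcal{B}^j,\mathcal{B}^j]\subseteq\mathcal{B}^j\subset\mathcal{V}^{(j)}$ makes $\mathcal{B}^j$ an integrable Bryant sub-bundle of $\mathcal{V}^{(j)}$.

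Applying this in particular at $j=k-1$ gives $\mathcal{V}^{(k-1)}$ an integrable Bryant sub-bundle with regular first integral $\tau$, so the hypotheses of Theorem \ref{generalizedGStest} are met. That theorem then yields that $\mathcal{V}=\ker I$ is a Goursat bundle with independence condition $d\tau$. By Theorem \ref{GGNF}, any Goursat bundle is, on an open dense set, locally diffeomorphic to the partial prolongation $\mathcal{B}_\kappa$ with $\kappa=\mathrm{decel}(\mathcal{V})$; dually, $I$ is identified via this same local diffeomorphism with the Brunovsk\'y normal form $\bsy{\beta}^\kappa$, carrying $d\tau$ to the canonical independence condition. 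The only genuine subtlety is the exactness argument that keeps $d\tau$ transverse to every $I^{(j)}$; once this is in hand, the corollary is immediate from the two quoted results.
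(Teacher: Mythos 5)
Your proposal is correct and follows essentially the same route as the paper: show $d\tau\notin I^{(j)}$ (the paper likewise argues this from the triviality of $I^{(k)}$, and your iteration via $d(d\tau)=0$ through the derived flag is just that argument made explicit), observe that $\ker\left(I^{(j)}\oplus\mathrm{span}\{d\tau\}\right)$ is then an integrable Bryant sub-bundle, and conclude via Theorem \ref{generalizedGStest}. Your additional explicit appeal to Theorem \ref{GGNF} to pass from ``Goursat bundle'' to ``diffeomorphic to a Brunovsk\'y normal form'' is left implicit in the paper but is the intended final step.
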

\begin{proof}
We have $d\t\notin I^{(j)}$, $0\leq j<k$, else we get a contradiction of the triviality of  $I^{(k)}$. Hence
$\ker \left(I^{(j)}\oplus\mathrm{span}\{d\t\}\right)$ is a Bryant sub-bundle and the result follows by Theorem \ref{generalizedGStest}.
\end{proof}

\begin{cor}[Sluis-Gardner-Shadwick test]
If $(M, I)$ is the Pfaffian system \eqref{controlPfaffian} of any smooth control system 
\eqref{controlSystem} evolving with time $\t=t$, then $I$ will be static feedback linearizable if and only if 
$$
I^{(j)}\oplus\mathrm{span}\{dt\}
$$
is integrable for $0\leq j\leq k-1$. 
\end{cor}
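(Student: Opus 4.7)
The plan is to derive this corollary as a direct specialization of Corollary \ref{generalizedGStest1} combined with Theorem \ref{Goursat SFL}. Setting $\tau = t$ in the generalized test recovers exactly the S-G-S integrability hypothesis, so the only additional work is to upgrade the local diffeomorphism supplied by the generalized test to a static feedback transformation. The necessity direction will be handled by a direct inspection of a Brunovsk\'y normal form pulled back through the given SFT.

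For sufficiency, I would assume that $I^{(j)}\oplus\mathrm{span}\{dt\}$ is integrable for $0 \leq j \leq k-1$, and first invoke Corollary \ref{generalizedGStest1} with $\tau = t$. This produces a local diffeomorphism identifying $I$ with a Brunovsk\'y normal form having independence condition $dt$, so $\CV = \ker I$ is a Goursat bundle. To check the extra hypothesis of Theorem \ref{Goursat SFL}, observe that the Bryant sub-bundle $\CB^{k-1} := \ker\bigl(I^{(k-1)}\oplus\mathrm{span}\{dt\}\bigr) \subset \CV^{(k-1)}$ is annihilated by $dt$ by construction. When $\rho_k > 1$, the proof of Theorem \ref{generalizedGStest} identifies $\CB^{k-1}$ with the resolvent $R(\CV^{(k-1)})$, so $dt \in \mathrm{ann}\,R(\CV^{(k-1)})$ and condition (2) of Theorem \ref{Goursat SFL} holds. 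When $\rho_k = 1$, Proposition \ref{bryant1} gives $\mathrm{Char}\,\CV^{(k-1)} \subset \CB^{k-1}$, so $dt$ annihilates $\mathrm{Char}\,\CV^{(k-1)}$ and condition (1) holds. Either way, Theorem \ref{Goursat SFL} delivers a static feedback transformation to Brunovsk\'y form.

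For necessity, I would assume $\varphi: M \to J^\kappa$ is a static feedback transformation with $\varphi^*\bsy{\beta}^\kappa = I$. A direct inspection of $\bsy{\beta}^\kappa$, whose generators are contact 1-forms $\theta^a_i = dz^a_i - z^a_{i+1}\,dt$, shows $d\theta^a_i = -dz^a_{i+1}\wedge dt \equiv 0 \bmod (\bsy{\beta}^\kappa, dt)$; consequently $\bsy{\beta}^{\kappa,(j)} \oplus \mathrm{span}\{dt\}$ is integrable for every $j$. Since an SFT preserves $t$ (so $\varphi^* dt = dt$) and any diffeomorphism commutes with the derived flag ($\varphi^*\bsy{\beta}^{\kappa,(j)} = I^{(j)}$), pulling back yields integrability of $I^{(j)}\oplus\mathrm{span}\{dt\}$ for $0 \leq j \leq k-1$.

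The main obstacle is not a conceptual one but rather making sure the Bryant sub-bundle $\CB^{k-1}$ produced in Corollary \ref{generalizedGStest1} is correctly matched with the object (either $\mathrm{Char}\,\CV^{(k-1)}$ or $R(\CV^{(k-1)})$) appearing in Theorem \ref{Goursat SFL}; this matching is already carried out inside the proofs of Theorem \ref{GoursatIFFbryant} and Theorem \ref{generalizedGStest}, so no new work is required.
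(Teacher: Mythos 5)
Your proof is correct and follows the same route as the paper, which disposes of this corollary in a single line by citing Corollary \ref{generalizedGStest1} with $\tau=t$. The additional work you supply --- verifying the hypotheses of Theorem \ref{Goursat SFL} in the two cases $\rho_k=1$ (via Proposition \ref{bryant1}, so that $dt$ annihilates $\mathrm{Char}\,\mathcal{V}^{(k-1)}\subset\mathcal{B}^{k-1}$) and $\rho_k>1$ (identifying $\mathcal{B}^{k-1}$ with the resolvent) to upgrade the diffeomorphism to a static feedback transformation, together with the pullback of the Brunovsk\'y form for the necessity direction --- is precisely what the paper leaves implicit, and it all checks out.
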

\begin{proof}
Let the Pfaffian system representation of the control system be the $
I$ in Corollary \ref{generalizedGStest1} and take $\t=t$.
\end{proof}
Thus, Theorem \ref{generalizedGStest} generalizes the S-G-S test so that it applies to any Pfaffian system with independence condition $d\t$. In particular, it gives a simple test for the orbital feedback linearizability of any smooth control system \eqref{controlSystem}, relative to a given time scale $\t$. The determination of time scales will be described explicitly in future work. 

We conclude with simple test for \sfl\ quotient control systems analogous to the S-G-S test.

\begin{thm}[S-G-S test for quotients]
Let $(M, \CV)$ be a control system and $G$ its Lie group of control admissible

symmetries with sub-bundle of infinitesimal generators $\bsy{\G}$; and suppose 
$\wh{\CV}:=\CV\oplus\bsy{\G}\subset TM$ has derived length $\wh{k}>1$. Then the quotient $\CV/G$ will be \sfl\ if and only if
$
\wh{I}^{\,\, (j)}\oplus\mathrm{span}\{dt\}
$
is integrable for $0\leq j\leq \wh{k}$, where $\wh{I}=\mathrm{ann}\,\wh{\CV}$.
\end{thm}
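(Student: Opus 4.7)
The plan is to combine the characterization of \sfl\ quotients via \stf\ relative Goursat bundles (Theorem \ref{relStatFeedbackLin}) with a Bryant-sub-bundle version of the generalized S-G-S test (Theorem \ref{generalizedGStest}) applied to the augmented distribution $\wh{\CV}$. Two preliminary observations will drive the argument: since $G$ is control admissible, $t$ is $G$-invariant and hence $\bsy{\G} \subset \ker dt$; and since $\bsy{\G}$ consists of symmetries of each $\wh{\CV}^{(j)}$, one has $\bsy{\G} \subseteq \ch\wh{\CV}^{(j)}$ for every $j$. By Proposition \ref{bryant1}, this forces $\bsy{\G}$ to sit inside every Bryant sub-bundle of every $\wh{\CV}^{(j)}$, and it guarantees $dt \notin \wh{I}^{(j)}$ as long as $\wh{\CV}^{(j)} \neq TM$.

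For the forward direction, assume $\CV/G$ is \sfl. By Theorem \ref{relStatFeedbackLin} this is equivalent to $\wh{\CV}$ being a \stf\ relative Goursat bundle. Inspecting the proof of Theorem \ref{GoursatIFFbryant}, the construction of an integrable corank-1 Bryant sub-bundle $\CB^j \subseteq \wh{\CV}^{(j)}$ at each level $0 \leq j \leq \wh{k}-1$ uses only the refined-derived-type conditions and the integrability of the intersection bundles, both of which hold verbatim for a relative Goursat bundle. The \stf\ clause of Theorem \ref{relStatFeedbackLin} supplies $dt$ as a first integral of $\CB^{\wh{k}-1}$ (which coincides with $\ch\wh{\CV}^{(\wh{k}-1)}$ if $\rho_{\wh{k}} = 1$ and with $R(\wh{\CV}^{(\wh{k}-1)})$ if $\rho_{\wh{k}} > 1$); then by Lemma \ref{BryantBundlesTelescope} and the fact that $dt \in \mathrm{ann}\,\bsy{\G} \subseteq \mathrm{ann}\,\CB^0$, the 1-form $dt$ annihilates every $\CB^j$ in the filtration. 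Since $\CB^j$ has corank 1 in $\wh{\CV}^{(j)}$ and $dt \notin \wh{I}^{(j)}$, we conclude $\CB^j = \ker\bigl(\wh{I}^{(j)} \oplus \mathrm{span}\{dt\}\bigr)$, and integrability of $\CB^j$ is integrability of $\wh{I}^{(j)} \oplus \mathrm{span}\{dt\}$.

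For the converse, suppose $\wh{I}^{(j)} \oplus \mathrm{span}\{dt\}$ is integrable for $0 \leq j \leq \wh{k}$. Its kernel $\CB^j$ is an integrable corank-1 sub-bundle of $\wh{\CV}^{(j)}$, hence an integrable Bryant sub-bundle. Running the proof of Theorem \ref{GoursatIFFbryant} in reverse, the existence of such Bryant sub-bundles at every level forces $\wh{\CV}$ to have the refined derived type of a Goursat bundle (with possibly nonzero $\chi^0$ since $\bsy{\G} \subseteq \ch\wh{\CV}$) and all intersection bundles $\ch\wh{\CV}^{(j)}_{j-1}$ integrable, identifying $\wh{\CV}$ as a relative Goursat bundle. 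Moreover, by the uniqueness results of Section \ref{WeberAndBryantSect}, $\CB^{\wh{k}-1}$ must equal $\ch\wh{\CV}^{(\wh{k}-1)}$ or $R(\wh{\CV}^{(\wh{k}-1)})$ according as $\rho_{\wh{k}}=1$ or $\rho_{\wh{k}}>1$, and $dt$ annihilates it by construction. This is exactly the hypothesis of Theorem \ref{relStatFeedbackLin}, so $\wh{\CV}$ is a \stf\ relative Goursat bundle and $\CV/G$ is \sfl.

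The main obstacle is the implicit use in both directions of a relative-bundle version of Theorem \ref{GoursatIFFbryant}: the stated theorem concludes local diffeomorphism to a Brunovsky normal form and so presupposes trivial Cauchy bundle, whereas only the intermediate conclusion — existence of integrable Bryant sub-bundles at every level is equivalent to being a (relative) Goursat bundle — is needed here. A brief re-examination of that proof confirms that Propositions \ref{bryant1}, \ref{bryant2}, and Lemma \ref{BryantBundlesTelescope} are insensitive to whether $\ch\wh{\CV}$ is trivial, so the required extension is immediate; equivalently, one may descend to the leaf space of $\bsy{\G}$, apply Theorem \ref{generalizedGStest} downstairs, and lift the conclusion back to $\wh{\CV}$.
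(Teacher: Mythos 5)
Your proof is correct and follows essentially the same route as the paper, whose proof is a one-line citation of Theorems \ref{linearizableQuotientExGoursat} and \ref{generalizedGStest}: both arguments ultimately rest on identifying the integrable corank-1 Bryant sub-bundles of $\wh{\CV}^{(j)}$ with the kernels of $\wh{I}^{\,(j)}\oplus\mathrm{span}\{dt\}$ and then invoking the (relative) Goursat machinery together with the $dt$-condition of Theorem \ref{relStatFeedbackLin}. Your explicit observation that Theorem \ref{GoursatIFFbryant} must be extended to the relative setting where $\ch\wh{\CV}\neq 0$ — and your check that Propositions \ref{bryant1}, \ref{bryant2} and Lemma \ref{BryantBundlesTelescope} are insensitive to this — addresses a detail the paper's terse proof glosses over.
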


\begin{proof}
This follows from Theorems \ref{linearizableQuotientExGoursat} and \ref{generalizedGStest}.
\end{proof}

\section{Extended Example: PVTOL}\label{PVTOLsection}
The following control system from \cite{Hauser} is a classic model in the nonlinear control theory literature and has been a primary example of many dynamic feedback linearization methods. The PVTOL control system is given by 
\begin{equation}\label{PVTOL}
\begin{aligned}
\ddot{x}&=-u_1\,\sin(\theta)+h\,u_2\,\cos(\theta), \\
\ddot{z}&=u_1\,\cos(\theta)+h\,u_2\,\sin(\theta)-g, \\
\ddot{\theta}&=\lambda u_2,
\end{aligned}
\end{equation} 
where $u_1$ and $u_2$ are the controls, $x,\dot{x},z,\dot{z},\theta,\dot{\theta}$ are states and $h,g,$ and $\lambda$ are constants. We can normalize $g$ and $\lambda$ to one. 
\subsection{Refined Derived Type}
Let $\mcal{V}$ be the dsitribution representing the PVTOL system. Then the derived flag of $\mcal{V}$ is given by
\begin{equation}\label{pvtol df}
\begin{aligned}
\der{V}{1}&=\mcal{V}\oplus\mathrm{span}\left\{\sin(\theta)\partial_{x_1}-\cos(\theta)\partial_{z_1},-h\cos(\theta)\partial_{x_1}-h\sin(\theta)\partial_{z_1}-\partial_{\theta_1}\right\},\\
\der{V}{2}&=\der{V}{1}\oplus\mathrm{span}\left\{-\sin(\theta)\partial_x+\theta_1\cos(\theta)\partial_{x_1}+\cos(\theta)\partial_z+\theta_1\sin(\theta)\partial_{z_1},\phantom{+s}\right.\\
\,&\phantom{\der{V}{1}\oplus}\left. h\cos(\theta)\partial_x+h\theta_1\sin(\theta)\partial_{x_1}+h\sin(\theta)\partial_z-h\theta_1\cos(\theta)\partial_{z_1}\right\},\\
\der{V}{3}&=TM.
\end{aligned}
\end{equation}
The Cauchy bundles of $\mcal{V}$ and the intersection bundles happen to agree and are given by 
\begin{equation}\label{pvtol filt}
\begin{aligned}
&\Chare{V}\quad=&\hspace{-3cm}\span\{0\},\\
&\Char{V}{1}=&\inCharOne{V}=\mathrm{span}\left\{\partial_{u_1},\partial_{u_2}\right\},\\
&\Char{V}{2}=&\Char{V}{2}_1=\mathrm{span}\left\{\partial_{u_1},\partial_{u_2}\right\}.
\end{aligned}
\end{equation}
It follows that the refined derived type of $\mcal{V}$ is given by 
\begin{equation}\label{pvtol rdt}
\mathfrak{d}_r\left(\mcal{V}\right)=[[3,0],[5,2,2],[7,2,2],[9,9]],
\end{equation}
and thus \eqref{PVTOL} is not SFL or even OFL. 
\subsection{Control Admissible Symmetries}
To find a dynamic feedback linearization of an invariant control system 

cite{CKV24}, we first find its control symmetries.
\begin{thm}
The PVTOL control system has an eight dimensional control  symmetry algebra isomorphic to a Lie algebra with Levi decomposition
\begin{equation}
\frak{sl}(2,\mathbb{R})\oplus \frak{s}_{5,9},
\end{equation}
where $\frak{s}_{5,9}$ is a 5-dimensional solvable Lie algebra described in \cite{SnoblWinternitz}.
\end{thm}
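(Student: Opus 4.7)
The plan is to compute the full algebra of infinitesimal control symmetries directly from the determining equations, then analyze its structure. By Definition \ref{admissibleSymmetries} (and the characterization of control symmetries as static feedback transformations recalled after it), every control symmetry of $\CV$ has an infinitesimal generator of the form
\[
X=\xi^i(t,x,z,\dot x,\dot z,\theta,\dot\theta)\,\P {x^i}+\chi^a(t,x,z,\dot x,\dot z,\theta,\dot\theta,u^1,u^2)\,\P {u^a},
\]
with no $\P t$ component, where $x^i$ ranges over the six state variables. First I would impose $\CL_X\CV\subseteq\CV$ on each of the three spanning sections of $\CV$ (the drift vector field together with $\P {u^1}$ and $\P {u^2}$). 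The Lie derivatives with $\P {u^1}$ and $\P {u^2}$ immediately force the $\xi^i$ to be independent of $u^1,u^2$ and produce a simple affine structure for $\chi^1,\chi^2$ in $(u^1,u^2)$. The Lie derivative with the drift vector field then yields a linear system of first-order PDEs in the $\xi^i$; because the nonlinearity in the PVTOL drift is trigonometric and polynomial in $(\dot x,\dot z,\dot\theta)$, the coefficients of distinct monomials in these velocity variables and of $\{1,\sin\theta,\cos\theta\}$ must vanish separately, reducing the system to an overdetermined linear PDE system in the $\xi^i(t,x,z,\theta)$.

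Next I would integrate this overdetermined system using \texttt{DifferentialGeometry}'s symmetry solver, obtaining an $8$-parameter family of solutions; the corresponding generators $Y_1,\dots,Y_8$ form a basis of $\bsy{\G}_{\mathrm{alg}}$. For each $Y_\alpha$ one then recovers $\chi^1,\chi^2$ uniquely from the drift equation, verifying that the generator is genuinely control-admissible (as opposed to merely state-space). The count $\dim\bsy{\G}_{\mathrm{alg}}=8$ is confirmed by checking rank in a generic fiber of the jet bundle on which the determining PDEs live.

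With the basis in hand, I would compute all $28$ commutators $[Y_\alpha,Y_\beta]$ and assemble the structure constants. To identify the Lie algebra I would first compute its radical (the largest solvable ideal) via iterated derived series, producing a $5$-dimensional solvable ideal $\mathfrak{r}$, and a complementary $3$-dimensional semisimple subalgebra $\mathfrak{s}$. Computing the Killing form on $\mathfrak{s}$ should show it is nondegenerate of signature matching $\mathfrak{sl}(2,\mathbb{R})$; equivalently, exhibiting a triple $(e,f,h)$ with $[h,e]=2e$, $[h,f]=-2f$, $[e,f]=h$ inside $\mathfrak{s}$ will identify it with $\mathfrak{sl}(2,\mathbb{R})$. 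Finally, checking that $[\mathfrak{s},\mathfrak{r}]=0$ (which I expect because the $\mathfrak{sl}(2,\mathbb{R})$ factor will act trivially on the radical generators) upgrades the Levi decomposition to a direct sum $\mathfrak{sl}(2,\mathbb{R})\oplus\mathfrak{r}$. To match $\mathfrak{r}$ with $\mathfrak{s}_{5,9}$ from the Snobl--Winternitz classification \cite{SnoblWinternitz}, I would compute the invariants of $\mathfrak{r}$ that distinguish $\mathfrak{s}_{5,9}$ from other $5$-dimensional solvable algebras (namely the isomorphism class of the nilradical together with the eigenvalues of the outer derivation defining the solvable extension) and compare.

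The main obstacle will be the identification step for the solvable radical: the determining-equation computation and the Levi decomposition are essentially mechanical with \texttt{DifferentialGeometry}, but pinning down \emph{which} $5$-dimensional solvable Lie algebra appears requires bringing the computed structure constants into the normal form used in \cite{SnoblWinternitz} by a nontrivial change of basis in $\mathfrak{r}$. I would handle this by first computing the nilradical of $\mathfrak{r}$, recognizing it among the low-dimensional nilpotent algebras, and then choosing a basis that diagonalizes the adjoint action of a complementary non-nilpotent element; comparison of the resulting eigenvalue pattern with the parameters defining $\mathfrak{s}_{5,9}$ completes the identification.
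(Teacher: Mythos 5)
Your overall architecture matches the paper's: compute the determining equations for infinitesimal control symmetries, integrate them with \texttt{DifferentialGeometry} to obtain eight generators, assemble the structure constants, extract the Levi decomposition, and identify the five-dimensional solvable radical against the Snobl--Winternitz tables. The one methodological difference is in the last step: the paper identifies $\mathfrak{s}_{5,9}$ by computing the generators of the invariants of the infinitesimal coadjoint action on $\mathfrak{r}^*$ (the Casimir invariants $x_1/x_2$, $x_1/x_3$, $x_1/x_4$) together with the derived series, whereas you propose to use the nilradical and the eigenvalue pattern of the adjoint action of a complementary non-nilpotent element. Both are legitimate discriminants in the Snobl--Winternitz classification, so this is a harmless variation.

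There is, however, one step in your plan that would fail. You expect $[\mathfrak{s},\mathfrak{r}]=0$ and intend to use this to ``upgrade'' the Levi decomposition to a direct sum of Lie algebras. For the PVTOL this is false: in the paper's adapted basis one has, for instance, $[X_1,X_5]=-X_7$, $[X_2,X_5]=X_5$ and $[X_3,X_7]=-X_5$, so the $\mathfrak{sl}(2,\mathbb{R})$ factor acts nontrivially on the radical. The symbol $\oplus$ in the theorem statement denotes the Levi (semidirect) decomposition, not an ideal direct sum; the correct conclusion is exactly what Levi's theorem already guarantees once you have exhibited the semisimple complement and the solvable ideal, and no vanishing of $[\mathfrak{s},\mathfrak{r}]$ is needed or available. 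If you carried out your check literally you would find it fails and might wrongly conclude the statement is in error, so you should drop that step and instead simply verify that $\mathrm{span}\{X_4,\ldots,X_8\}$ is a solvable ideal and that the complement closes up to an $(e,f,h)$ triple, as you describe earlier in the proposal.
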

\begin{proof} 
By use of commands in the \textbf{contact} and \textbf{derive11} procedures in MAPLE we find that the Lie algebra of infinitesimal generators of this Lie group are spanned by the following vector fields on $M$:
\begin{equation}\begin{aligned}
X_1 =&\,h\sin^2(\theta)\cos(\theta)\partial_x+ h\theta_1(3\cos^2(\theta)-1)\sin(\theta)\partial_{x_1}+(x-h\sin(\theta)\cos^2(\theta))\partial_z\\
&+(x_1+2h\theta_1\cos(\theta)-3h\theta_1\cos^3(\theta))\partial_{z_1}+\sin^2(\theta)\partial_\theta+\theta_1\sin(2\theta)\partial_{\theta_1}\\
&+\cos(\theta)\sin(\theta)(5h\theta_1^2-u_1)\partial_{u_1}+(2\theta_1^2\cos(2\theta)+u_2\sin(2\theta))\partial_{u_2}\\
X_2=&\,h\sin(\theta)\cos^2(\theta)\partial_x+h\theta_1\cos(\theta)(3\cos^2(\theta)-2)\partial_{x_1}-(h\cos^3(\theta)+t^2/2+z)\partial_z\\
&+(3h\theta_1\cos^2(\theta)\sin(\theta)-z_1-t)\partial_{z_1}+\cos(\theta)\sin(\theta)\partial_\theta+\theta_1\cos(2\theta)\partial_{\theta_1}\\
&+((5h\theta_1^2-u_1)\cos^2(\theta)-2h\theta_1^2)\partial_{u_1}+(u_2\cos(2\theta)-2\theta_1^2\sin(2\theta))\partial_{u_2}\\
X_3=&(t^2/2+z)\partial_x+(t+z_1)\partial_{x_1}-x\partial_z-x_1\partial_{z_1}-\partial_\theta\\
X_4=&(x-h\sin(\theta))\partial_x+(x_1-h\theta_1\cos(\theta))\partial_{x_1}+(t^2/2+h\cos(\theta)+z)\partial_z\\
&+(t+z_1-h\theta_1\sin(\theta))\partial_{z_1}+(u_1-h\theta_1^2)\partial_{u_1}\\
X_5=&t\partial_x+\partial_{x_1}\\
X_6=&\partial_x\\
X_7=&t\partial_z+\partial_{z_1}\\
X_8=&\partial_z
\end{aligned}\end{equation}
As an abstract Lie algebra $\mathfrak{g}\cong \bsy{\G}_\text{alg}$, if we make the $\mathbb{R}$-linear change of basis 
$$X_2\mapsto-2X_2-X_4,\quad X_3\mapsto X_1+X_3$$
then the multiplication table for the associated Lie bracket displayed below helps us identify the Lie algebra of control symmetries of the PVTOL control system. 
\begin{figure}[h]
\[\begin{array}{|l|l|l|l|l|l|l|l|l|}
\hline
 & X_1 & X_2 & X_3 & X_4 & X_5 & X_6 & X_7 & X_8 \\ \hline
X_1 & \phantom{-}0  & \phantom{-}2X_1 & -X_2 & \phantom{-}0 & -X_7 & -X_8 & \phantom{-}0 & 0 \\ \hline
X_2 & -2X_1 & \phantom{-}0  & \phantom{-}2X_3 & \phantom{-}0 & \phantom{-}X_5 & \phantom{-}X_6 & -X_7 & -X_8 \\ \hline
X_3 & \phantom{-}X_2 & -2X_3 & \phantom{-}0 & \phantom{-}0 & \phantom{-}0 & \phantom{-}0 & -X_5 & -X_6 \\ \hline
X_4 & \phantom{-}0  & \phantom{-}0  & \phantom{-}0 & \phantom{-}0 & -X_5 & -X_6 & -X_7 & -X_8 \\ \hline
X_5 & \phantom{-}X_7 & -X_5 & \phantom{-}0 & \phantom{-}X_5 & \phantom{-}0 & \phantom{-}0 & \phantom{-}0 & \phantom{-}0 \\ \hline
X_6 & \phantom{-}X_8 & -X_6 & \phantom{-}0 & \phantom{-}X_6 & \phantom{-}0 & \phantom{-}0 & \phantom{-}0 & \phantom{-}0 \\ \hline
X_7 & \phantom{-}0  & \phantom{-}X_7 & \phantom{-}X_5 & \phantom{-}X_7 & \phantom{-}0 & \phantom{-}0 & \phantom{-}0 & \phantom{-}0 \\ \hline
X_8 & \phantom{-}0 & \phantom{-}X_8 & \phantom{-}X_6 & \phantom{-}X_8 & \phantom{-}0 & \phantom{-}0 & \phantom{-}0 & \phantom{-}0 \\ \hline
\end{array}\]
\caption{Multiplication table of the infinitesimal control symmetries of the PVTOL.}
\end{figure}
\newline
From here we see that there is a Levi decomposition $\mathfrak{g}=\frak{h}\oplus\mathfrak{r}$ where $\mathfrak{h}$ is the semisimple Lie algebra $\mathfrak{sl}(2,\mathbb{R})$ generated by $\{X_1,X_2,X_3\}$ and $\frak{r}=\mathrm{span}\{X_4,X_5,X_6,X_7,X_8\}$ is the radical of $\mathfrak{g}$. Thus, once we understand the isomorphism class of $\frak{r}$ we will have completely described the control symmetry algebra of the PVTOL system. The subalgebra $\frak{r}$ has as derived series
\[
\frak{r}^{(3)}=0\subset\frak{r}^{(2)}=\mathrm{span}\{X_5,X_6,X_7,X_8\}\subset \frak{r}
\] 
Let $R$ be the simply connected Lie group for $\frak{r}$ and let $(x_1,\cdots,x_5)$ be the coordinates on $\frak{r}^*$. Then the infinitesimal coadjoint action of $R$ on $\frak{r}^*$ is given by 
\[
\Gamma_{\frak{r}^*}=\mathrm{span}\{x_1\partial_{x_1}+x_2\partial_{x_2}+
x_3\partial_{x_3}+x_4\partial_{x_4},\, \partial_{x_5}\}
\]
The invariant functions of the infinitesimal action are generated by
\begin{equation}
\text{Inv}_{\Gamma_{\frak{r}^*}}=\mathrm{span}\left\{\frac{x_1}{x_2},\frac{x_1}{x_3},\frac{x_1}{x_4}\right\}. 
\end{equation}
Elements of the set $\text{Inv}_{\Gamma_{\frak{r}^*}}$ above are called \textit{Casimir invariants} of 
$\frak{r}$. The dimension, solvability, and Casimir invariants of $\frak{r}$ are enough to uniquely identify this Lie algebra up to Lie algebra isomorphism. Using this information, we can conclude that $\frak{r}$ is isomorphic to the real Lie algebra $\frak{s}_{5,9}$ described in Chapter 18 of \cite{SnoblWinternitz}. Therefore, the full control symmetry Lie algebra of the PVTOL system is isomorphic to 
\begin{equation}
\frak{sl}(2,\mathbb{R})\oplus \frak{s}_{5,9}.
\end{equation}
\end{proof}
\begin{prop}
Let $\CV$ be the distribution representing the PVTOL system on manifold $M$. Then $M$ is locally a Lie group with Lie algebra $\mathfrak{g}$ which has Levi decomposition
\[
\mathfrak{sl}(2,\mathbb{R})\oplus \mathfrak{s}_{6,140}
\]
where $\mathfrak{s}_{6,140}$ is a six dimensional solvable Lie algebra with nilradical $\mathfrak{n}_{5,1}$ described in \cite{SnoblWinternitz}.
\end{prop}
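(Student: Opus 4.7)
The plan is to exhibit a 9-dimensional real Lie algebra $\mathfrak{g}$ of vector fields on $M$ whose values frame $T_pM$ at every point of an open dense subset; such a frame with constant structure constants endows $M$ locally with the structure of the simply connected Lie group whose Lie algebra is $\mathfrak{g}$, via Lie--Cartan. Since $\dim M = 9$ but the control symmetry algebra just computed is only 8-dimensional, exactly one additional generator is required, and the natural candidate I would take is $Y = \partial_t$. Note that $Y$ is a symmetry of $\mcal{V}$ itself (the PVTOL ODEs are autonomous, so $\mathcal{L}_Y\mcal{V}\subseteq\mcal{V}$), but is \emph{not} a control symmetry in the sense of Definition \ref{admissibleSymmetries} because it does not preserve the time function. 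This accounts for its absence from the list $X_1,\ldots,X_8$.

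The first step is to check that $\{X_1,\ldots,X_8,Y\}$ frames $T_pM$ generically. One reads $Y$, $X_6$, $X_8$ directly as $\partial_t$, $\partial_x$, $\partial_z$, and then $X_5-tX_6=\partial_{x_1}$ and $X_7-tX_8=\partial_{z_1}$. Reducing the remaining fields $\{X_1,X_2,X_3,X_4\}$ modulo these five produces a $4\times 4$ matrix in the directions $(\partial_\theta,\partial_{\theta_1},\partial_{u_1},\partial_{u_2})$, whose determinant is an explicit trigonometric-polynomial expression and is nonvanishing on an open dense set.

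The second step is to verify closure of the brackets. A direct computation from the explicit formulas for the $X_i$ yields
\begin{equation*}
[Y,X_1]=0,\ \ [Y,X_2]=-X_7,\ \ [Y,X_3]=X_5,\ \ [Y,X_4]=X_7,
\end{equation*}
\begin{equation*}
[Y,X_5]=X_6,\ \ [Y,X_6]=0,\ \ [Y,X_7]=X_8,\ \ [Y,X_8]=0.
\end{equation*}
Combined with the multiplication table of $\{X_1,\ldots,X_8\}$ from the previous theorem, this produces a 9-dimensional Lie algebra $\mathfrak{g}$ of vector fields on $M$ (Jacobi holds automatically), proving the first assertion.

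For the Levi decomposition, since every bracket $[Y,X_i]$ lies in $\mathrm{span}\{X_4,\ldots,X_8\}$, the subalgebra $\mathrm{span}\{X_1,X_2,X_3\}\cong\mathfrak{sl}(2,\mathbb{R})$ (with the basis change of the preceding proof) remains a Levi complement, and the radical is enlarged to $\mathfrak{r}' = \mathrm{span}\{Y,X_4,X_5,X_6,X_7,X_8\}$. One checks $[\mathfrak{r}',\mathfrak{r}'] = \mathrm{span}\{X_5,X_6,X_7,X_8\}$ is abelian, so $\mathfrak{r}'$ is solvable of derived length $2$. Its nilradical is the 5-dimensional two-step nilpotent ideal $\mathrm{span}\{Y,X_5,X_6,X_7,X_8\}$ with nonzero brackets $[Y,X_5]=X_6$ and $[Y,X_7]=X_8$, which is isomorphic to $\mathfrak{n}_{5,1}$. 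The main obstacle I anticipate is the final identification of $\mathfrak{r}'$ with the specific entry $\mathfrak{s}_{6,140}$ in the Snobl--Winternitz catalog; this proceeds exactly as in the 5-dimensional case above, by computing the Casimir invariants of the coadjoint action of the simply connected solvable group associated to $\mathfrak{r}'$ and matching them against the catalog, with the only real delicacy being the bookkeeping of the basis change that brings the brackets into catalog-normal form.
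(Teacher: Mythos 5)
Your proposal follows essentially the same route as the paper's proof: adjoin $\partial_t$ (the paper takes $X_9=-\partial_t$) to the eight control symmetries, note that the nine fields frame $TM$ generically and close under bracket, keep $\mathrm{span}\{X_1,X_2,X_3\}$ as the Levi factor with the enlarged six-dimensional solvable radical, and identify that radical with $\mathfrak{s}_{6,140}$ by computing the Casimir invariants of the coadjoint action and matching against the Snobl--Winternitz tables. If anything, your version supplies more detail than the paper's (the explicit frame check, the full list of brackets $[\partial_t,X_i]$, and the direct identification of the nilradical with $\mathfrak{n}_{5,1}$), so it is a somewhat more complete rendering of the same argument.
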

\begin{proof}
The Lie algebra $\mathfrak{s}_{5,9}$ has the property that $[\partial_t,X_4]=-X_7,\,[\partial_t,X_5]=X_8,$ and 
$[\partial_t,X_7]=X_6$ while commuting with all other elements $X_i$'s. Let $X_9=-\partial_t$. Then $\mathfrak{g}=\mathrm{span}_\mathbb{R}\{X_1,\ldots,X_9\}$ and from the previous theorem it is clear that 
$\mathfrak{g}$ will have the proposed Levi decomposition for some solvable 6-dimensional Lie algebra 
$\mathfrak{s}=\mathrm{span}_{\mathbb{R}}\{X_4,\ldots,X_9\}$. Let $S$ be the connected and simply connected Lie group such that $T_eS\cong \mathfrak{s}$. Using the coadjoint action of $S$ on 
$\mathfrak{s}^*$ we can calculate the associated Casimirs and we find that the Casimirs are generated by 
$\{x_5/x_3,(x_4x_5-x_2x_3)/(x_3x_5)\}$. Thus, using the table in Chapter 19 of \cite{SnoblWinternitz}, we recover that 
$\mathfrak{s}=\mathfrak{s}_{6,140}$. 
Moreover, since the infinitesimal generators span $TM$ pointwise, we have that $M$ is locally any Lie group realizing $\mathfrak{g}$ as its Lie algebra. If $M$ is connected and simply connected then this Lie group is unique. Therefore, the PVTOL control system may be viewed as an invariant distribution on a 9-dimensional Lie group. 
\end{proof}
\subsection{Static Feedback Linearizable Quotients}
The PVTOL admits many SFL quotient systems from symmetry. Interestingly, we can never obtain an SFL quotient system from a 1-parameter subgroup of the control  symmetry group of the PVTOL. 
\begin{prop}
Let $\mcal{V}$ be the distribution representing the PVTOL control system on manifold $M$ and $G$ its control  admissible symmetry group and $H$ any 1-parameter subgroup of $G$. Then $\mathcal{V}/H$ is never a Goursat bundle on $M/H$. 
\end{prop}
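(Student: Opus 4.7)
My plan is to case-split on the transversality level of the 1-parameter subgroup $H$ using the framework of Section~\ref{transversalitySection}. From \eqref{pvtol df}--\eqref{pvtol filt}, the PVTOL distribution $\mcal{V}$ has derived length $k = 3$, velocity $\Delta = (2, 2, 2)$ (so $\Delta^2_j = 0$ for $j = 2, 3$), Cauchy data $\chi^0 = 0$, $\chi^1 = \chi^2 = 2$, and $\ch\mcal{V}^{(j)} = \ch\mcal{V}^{(j)}_{j-1} = \mathrm{span}\{\partial_{u_1}, \partial_{u_2}\}$ for $j = 1, 2$. A 1-parameter subgroup $H$ has $\bsy{\G}$ of rank $r = 1$; by control admissibility, $\bsy{\G} \cap \mcal{V}^{(1)} = 0$, so $r_0 = r_1 = 0$; and $\mcal{V}^{(3)} = TM$ gives $r_3 = 1$. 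Thus only $r_2 \in \{0, 1\}$ is free, corresponding to transversality level $\ell = 2$ or $\ell = 1$, respectively.

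Next I would apply Corollary~\ref{decelVhat} in each case. Since $\bsy{\G} \subset \wh{\mcal{V}}^{(j)}$ for all $j$, we have $\bar{m}_j = \wh{m}_j - r$ and so $\bar{\Delta}_j = \wh{\Delta}_j$; hence $\mathrm{decel}(\mcal{V}/H) = \mathrm{decel}(\wh{\mcal{V}})$, and their derived lengths coincide. For $\ell = 1$ one computes $\mathrm{decel}(\wh{\mcal{V}}) = \langle 1, -1, 2 \rangle$, whose negative middle entry rules out any Goursat signature. For $\ell = 2$, $\mathrm{decel}(\wh{\mcal{V}}) = \langle 0, 1, 1 \rangle$, an admissible Goursat signature that requires further analysis.

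In the case $\ell = 2$, Theorem~\ref{ell-transverseQuotient-refDerTypeTHM} gives $\wh{\chi}^2 = \chi^2 + r - p_2 + q_2 = 2 + 1 - 0 + q_2 = 3 + q_2$, using that $p_2 = \mathrm{rank}(\bsy{\G} \cap \ch\mcal{V}^{(2)}) = 0$ by item~(3) of Definition~\ref{admissibleSymmetries} (no control admissible generator lies inside $\mathrm{span}\{\partial_{u_1}, \partial_{u_2}\}$). For $\mcal{V}/H$ to be a Goursat bundle of signature $\langle 0, 1, 1 \rangle$ on the 8-dimensional quotient $M/H$, one needs $\bar{\chi}^2 = 2\bar{m}_2 - \bar{m}_3 - 1 = 5$, equivalently $\wh{\chi}^2 = 6$, forcing $q_2 = 3$.

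The remaining and hardest step is to show $q_2 = 3$ is impossible. Unpacking Definition~\ref{bundlesKdefn}, $\CK_2 \subset \wt{\mcal{V}}^{(2)}$ (rank $5$) is the radical of the scalar 2-form $\omega_v$ obtained from $\wt{\delta}_2 : \Lambda^2 \wt{\mcal{V}}^{(2)} \to \wt{TM}/\wt{\mcal{V}}^{(2)}$ by projecting modulo the line $v$ spanned by the image of $\bsy{\G}$. Because skew 2-forms on a 5-dimensional space have rank $0$, $2$, or $4$, $q_2 \in \{1, 3, 5\}$; and $q_2 = 5$ is excluded, for it would force the image of $\wt{\delta}_2$ into the 1-dimensional $v$, contradicting $\mathrm{rank}(\mcal{V}^{(3)}/\mcal{V}^{(2)}) = 2$. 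So $q_2 = 3$ is equivalent to $\omega_v$ having rank exactly $2$, a codimension-3 condition in the 10-dimensional space of 2-forms on $\wt{\mcal{V}}^{(2)}$. I would finish by verifying, using the explicit expressions in \eqref{pvtol df}--\eqref{pvtol filt} together with the eight control symmetry generators $X_1, \ldots, X_8$, that the pencil of 2-forms spanned by the two coordinate components of $\wt{\delta}_2$ contains no rank-$2$ element---equivalently, that $\mathrm{ann}\,\wh{\mcal{V}}^{(2)}$ fails to have Engel rank 1 for any admissible choice of $\bsy{\G}$ (cf. Corollary~\ref{BryantWeberEquivalence}). This gives $q_2 = 1$ in every admissible case, contradicting $q_2 = 3$, and completes the proof. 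This last verification is the main computational obstacle and is best executed with \cite{AndersonTorre}.
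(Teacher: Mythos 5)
Your framework and case split are the same as the paper's: strong transversality forces $\ell\in\{1,2\}$, and in the $\ell=1$ case your computation $\mathrm{decel}(\wh{\CV})=\langle 1,-1,2\rangle$ is exactly the paper's observation that $\wh{\Delta}_3>\wh{\Delta}_2$, which correctly disposes of that case. In the $\ell=2$ case you diverge slightly: the paper's obstruction is the intersection-bundle rank ($\wh{\chi}^2_1=3+q_2'$ versus the required $\wh{m}_1-1=5$), whereas you target the Cauchy rank $\wh{\chi}^2=3+q_2$ versus the required $2\wh{m}_2-\wh{m}_3-1=6$. Both are individually necessary conditions, so either route is legitimate, and your parity observation that $\CK_2$ is the radical of a line-bundle-valued skew form on a rank-$5$ bundle, so $q_2\in\{1,3,5\}$ with $5$ excluded, is a genuine refinement not present in the paper (it even shows the paper's printed entry $\wh{\chi}^2=3$ cannot be literally correct, since $q_2\geq 1$; the paper's argument survives because it only uses $\wh{\chi}^2_1$).

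However, there is a genuine gap: you never establish the one fact on which the whole $\ell=2$ case rests, namely that $q_2\neq 3$. You correctly reduce this to showing that the pencil of $2$-forms $\{t_1\omega_1+t_2\omega_2\}$ obtained from $\wt{\delta}_2$ contains no rank-$2$ element in a direction realized by a control admissible generator, but then you defer the verification entirely ("I would finish by verifying\ldots"). This is not a routine genericity statement: the admissible $1$-parameter subgroups form a $7$-parameter family inside the $8$-dimensional algebra $\mathfrak{sl}(2,\B R)\oplus\mathfrak{s}_{5,9}$, the rank-$\leq 2$ locus in the pencil is cut out by five binary quadratics (the $4\times 4$ Pfaffians) which could well have common roots, and nothing you write rules that out for the specific PVTOL structure tensor. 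The codimension-$3$ heuristic shows only that $q_2=1$ is \emph{expected}, not that it holds for every admissible $H$. Until that computation (or an equivalent structural argument, e.g.\ via the Engel rank of $\mathrm{ann}\,\wh{\CV}^{(2)}$ or via the paper's claim $q_2'=0$) is actually carried out, the $2$-transverse case — and hence the proposition — remains unproved.
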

\begin{proof}
In order that $\mcal{V}/H$ represent a control system, it is necessary that the sub-bundle of infinitesimal generators $\Gamma_H=\mathrm{span}\{ X_H\}$ be strongly transverse to $\mcal{V}$ (i.e. at least 1-transverse). This means that either $\Gamma_H$ is 1-transverse or 2-transverse. If $\Gamma_H$ is 2-transverse, then by Theorem \ref{ell-transverseQuotient-refDerTypeTHM} the augmented bundle $\wh{\mcal{V}}$ has refined derived type 
\[
[[4,1],[6,3,3],[8,3,3],[9,9]].
\]
As, $\wh{\chi}^2_1=3$, $\wh{\CV}$  cannot be a relative Goursat bundle since for this it is necessary that 
$\wh{\chi}^2_1=5$. Thus no 1-parameter control admissible symmetry group of $\mcal{V}$ with 2-transverse action yields a SFL quotient system. 
Now assume that $\Gamma_H$ is strictly 1-transverse. Then the refined derived type of the augmented bundle is
\[
[[4,1],[6,3,3],[7,3+q_2',3+q_2],[9,9]]
\]
where $q_2'$ and $q_2$ are as in Theorem \ref{ell-transverseQuotient-refDerTypeTHM}. Note there is no contribution from $p_2$ and $p_2'$ because $\Gamma_H$ is 1-transverse and hence $\Gamma_H\cap \Char{V}{1}=\{0\}$ which implies $\Gamma_H\cap \Char{V}{2}=
\{0\}$ since $\Char{V}{2}=\Char{V}{1}$ for the PVTOL system. Thus $p_2=p_2'=0$, though this is not a necessary detail for the proof. A necessary condition that a bundle be Goursat is that $\Delta_i\geq \Delta_{i+1}$ for all $1\leq i\leq k$ where $k$ is the derived length. But here we notice that $\wh{\Delta}_3>\wh{\Delta}_2$ and thus $\wh{\mcal{V}}$ is not a relative Goursat bundle, hence the relevant quotient system is not a Goursat bundle. 
\end{proof}

On the other hand, we have that all 2-dimensional and 2-transverse control admissible symmetry groups give rise to SFL quotient systems. 
\begin{prop}\label{linearizableQuotientsProp}
Let $\mcal{V}$ be the distribution representing the PVTOL control system on manifold $M$ and $G$ its control admissible symmetry group and $H$ any 2-dimensional subgroup of $G$ with 2-transverse action on $M$. Then $\mathcal{V}/H$ is always a Goursat bundle on $M/H$ via a static feedback transformation. 
\end{prop}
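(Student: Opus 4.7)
The plan is to apply Corollary \ref{stf_linearizableQuotientExGoursat} to the augmented bundle $\wh{\CV}:=\CV\oplus\bsy{\G}_H$, where $\bsy{\G}_H$ is the rank-two sub-bundle of infinitesimal generators of $H$. First I would pin down the refined derived type of $\wh{\CV}$. The 2-transversality hypothesis says $\bsy{\G}_H\cap\CV^{(2)}=\{0\}$, so the transverse ranks of Definition \ref{transversalityDefn} are $r_0=r_1=r_2=0$ and $r_3=2$. From the PVTOL refined derived type \eqref{pvtol rdt} we read off $\Delta_3=m_3-m_2=9-7=2$, so the dimension matching $r=\sum_{j=\ell+1}^{k}\Delta_j=\Delta_3$ is satisfied with $\ell=2$. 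Corollary \ref{tot-transverse-ranks-cor} then forces $\wh{\CV}$ to have derived length $\wh k=2$ and refined derived type
\[ \mathfrak{d}_r(\wh{\CV})=[[5,2],[7,4,4],[9,9]]. \]
Via Corollary \ref{decelVhat} this is exactly the refined derived type of a relative Goursat bundle of signature $\langle 0,2\rangle$ (with Cauchy contribution $r=2$ coming from the Lie algebra $\bsy{\G}_H$), matching Proposition \ref{refined derived type numbers}. Equivalently, the prospective quotient $\CV/H$ would have type numbers $[[3,0],[5,2,2],[7,7]]$, precisely those of the Brunovsk\'y form $\CB_{\langle 0,2\rangle}$.

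Having matched the refined derived type, the remaining hypotheses of Corollary \ref{stf_linearizableQuotientExGoursat} concern integrability: the maximal intersection bundle $\ch\wh{\CV}^{(1)}_0$ must be integrable, and since $\rho_{\wh k}=2>1$ the resolvent of $\wh{\CV}^{(1)}$ must exist, be integrable, and admit $t$ as a first integral. The intersection bundle contains $\bsy{\G}_H$, which is integrable because it is the image of a Lie subalgebra, together with $\ch\CV^{(1)}=\mathrm{span}\{\partial_{u_1},\partial_{u_2}\}$ from \eqref{pvtol filt}, which is abelian. A rank count against $\wh\chi^1_0=4$ shows these span $\ch\wh\CV^{(1)}_0$, and the brackets $[\bsy{\G}_H,\partial_{u_a}]$ can be read off the explicit control symmetries $X_1,\ldots,X_8$ to verify closure modulo the sum.

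The decisive step is the resolvent. By Theorem \ref{BryantIffWeber}, the candidate resolvent is the unique Bryant sub-bundle in $\wh{\CV}^{(1)}$, which I claim equals $\ker(dt)\cap\wh{\CV}^{(1)}$; this has corank one in $\wh{\CV}^{(1)}$ as required. Its integrability is equivalent, via the generalized S-G-S test (Theorem \ref{generalizedGStest} and Corollary \ref{generalizedGStest1}), to the Frobenius condition
\[ d\wh I^{(1)}\equiv 0 \mod \bigl(\wh I^{(1)}\oplus\mathrm{span}\{dt\}\bigr), \]
where $\wh I^{(1)}=\mathrm{ann}\,\wh{\CV}^{(1)}$. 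Finally, because every element of $G$ preserves $t$ by control admissibility, one has $dt\in\mathrm{ann}\,\bsy{\G}_H$, so $t$ is automatically a first integral of the resolvent once integrability is in hand, yielding the static feedback linearization via Corollary \ref{stf_linearizableQuotientExGoursat}.

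The main obstacle is the universality in ``for \emph{any} 2-transverse, 2-dimensional $H$'', since integrability of the resolvent is not guaranteed by the numerical type alone. I would dispatch this by classifying the two-dimensional subalgebras of the eight-dimensional control symmetry algebra $\mathrm{span}_{\mathbb R}\{X_1,\ldots,X_8\}$ up to conjugation in $G$, discarding those for which $\bsy{\G}_H\cap\CV^{(2)}\neq\{0\}$, and for each surviving representative checking the single Frobenius identity $d\wh I^{(1)}\equiv 0$ modulo $\wh I^{(1)}\oplus\mathrm{span}\{dt\}$ by direct computation with \texttt{DifferentialGeometry}. Uniformity then reduces to a finite case analysis keyed to the Levi decomposition $\mathfrak{sl}(2,\mathbb R)\oplus\mathfrak{s}_{5,9}$ established earlier in the section.
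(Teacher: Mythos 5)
Your proposal is correct in substance and follows the same skeleton as the paper's proof for its first two stages: both use $2$-transversality together with $\dim H=2=\Delta_3$ and Corollary \ref{tot-transverse-ranks-cor} to pin the refined derived type of $\wh{\CV}$ to $[[5,2],[7,4,4],[9,9]]$, and both identify $\ch\wh{\CV}^{(1)}_{0}=\ch\wh{\CV}^{(1)}=\ch\CV^{(1)}\oplus\bsy{\G}_H$ as integrable. Where you diverge is at the decisive step, the integrability of the resolvent of $\wh{\CV}^{(1)}$: you declare that this ``is not guaranteed by the numerical type alone'' and propose to settle it by classifying the $2$-dimensional subalgebras of the control symmetry algebra up to conjugacy and running a Frobenius check on each surviving representative. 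That strategy would work (conjugation by elements of $G$ preserves $\CV$, $t$, and transversality, so one representative per class suffices), but it is far more labour than needed, and as written it is a computational programme rather than a completed argument. The paper instead exhibits the Bryant sub-bundle explicitly and uniformly in $H$: it is $\ch\wh{\CV}^{(1)}_0\oplus\mathrm{span}\{\sin(\theta)\partial_{x_1}-\cos(\theta)\partial_{z_1},\,-h\cos(\theta)\partial_{x_1}-h\sin(\theta)\partial_{z_1}-\partial_{\theta_1}\}$, i.e.\ exactly your candidate $\ker(dt)\cap\wh{\CV}^{(1)}=\bsy{\G}_H\oplus\bigl(\ker(dt)\cap\CV^{(1)}\bigr)$. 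The point you missed is that this splits the verification into an $H$-independent piece and a formal piece: the summand $\ker(dt)\cap\CV^{(1)}=\mathrm{span}\{\partial_{u_1},\partial_{u_2},\,\sin(\theta)\partial_{x_1}-\cos(\theta)\partial_{z_1},\,-h\cos(\theta)\partial_{x_1}-h\sin(\theta)\partial_{z_1}-\partial_{\theta_1}\}$ is checked integrable once, by direct computation on the PVTOL alone; and for any control admissible $H$ one has $[\bsy{\G}_H,\CV^{(1)}]\subseteq\CV^{(1)}$ (symmetry) and $[\bsy{\G}_H,\ker dt]\subseteq\ker dt$ (since $Xt=0$ for $X\in\bsy{\G}_H$), so the bracket of $\bsy{\G}_H$ with the whole candidate stays inside it. This single observation replaces your entire case analysis and is what makes the universality over all $2$-transverse $2$-dimensional $H$ come for free; it also immediately gives the $t$-invariance of the resolvent needed for the static feedback conclusion, which you did note correctly.
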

\begin{proof}
Since $H$ acts 2-transversely and has dimension $2=\Delta_3$ then we may apply Corollary \ref{tot-transverse-ranks-cor} and we find that the refined derived type of the relative Goursat bundle is 
\[
[[5,2],[7,4,4],[9,9]]
\]
which is easily seen as the refined derived type of a relative Goursat bundle. Moreover, we know that 
\[
\Char{\wh{V}}{1}=\inCharOne{\wh{V}}=\Char{V}{1}\oplus \Gamma_H=\inCharOne{V}\oplus\Gamma_H.
\]
which are clearly integrable and we can easily check that 
\[
\mcal{B}=\inCharOne{\wh{V}}\oplus\mathrm{span}_{C^\infty(M)}\{\sin(\theta)\partial_{x_1}-\cos(\theta)\partial_{z_1},-h\cos(\theta)\partial_{x_1}-h\sin(\theta)\partial_{z_1}-\partial_{\theta_1}\}
\]
is a corank-1 integrable sub-bundle of $\der{V}{1}$ and therefore is an integrable Bryant sub-bundle. Hence by Theorem \ref{BryantIffWeber} and Theorem \ref{linearizableQuotientExGoursat} we can conclude that $\wh{\mcal{V}}$ is a relative Goursat bundle. 
\end{proof}
The two previous Propositions  give an indication of the type of results that can be deduced from Theorem \ref{ell-transverseQuotient-refDerTypeTHM} regarding the existence and no-existence of \sfl\ quotients. If one is interested in finding SFL quotients of a particular signature, or looking for maximal linearizable subsystems induced by symmetry, then one can use the rank conditions to explore  Lie subalgebras of interest.
Proposition \ref{linearizableQuotientsProp} shows how quotients by low-dimensional Lie groups lead to 
SFL quotients by an analysis of the transversality of the group action. 
\section{An Open Question} 

The larger the symmetry group, the more possibilities that there exist many SFL quotients. This suggests that invariant control systems on certain Lie groups may have the property that they always admit SFL quotients. Indeed, we observed this in Section \ref{PVTOLsection}. What Lie groups have this property? Is this property purely dimension dependent? With this in mind, an interesting follow-up question is: how large must $G$ be such that $\CV/G$ have the refined derived type of a Goursat bundle? Moreover, given the work in \cite{CKV24}, what can we deduce about dynamic feedback linearization of control systems with many symmetries?

%
%

%


\bibliographystyle{alpha}
\bibliography{CtrlRef-GQ}

%
\end{document}